\documentclass[12pt, reqno]{amsart}

\usepackage[frenchb]{babel}
\usepackage[utf8]{inputenc}
\usepackage[T1]{fontenc}
\usepackage{tikz,tikz-3dplot}
\usepackage{amsmath,amsfonts,amssymb,amsmath,latexsym,mathrsfs}

\usepackage[all]{xy}

\usepackage{url}
\usepackage{tikz}
\setlength{\oddsidemargin}{0in}

\setlength{\evensidemargin}{0in}

\setlength{\textheight}{9in}

\setlength{\textwidth}{6.5in}

\setlength{\topmargin}{-0.5in}

\pagestyle{plain}

\newtheorem{theorem}{Théorème}[section]

\newtheorem{proposition}[theorem]{Proposition}

\newtheorem{definition}[theorem]{Définition}

\newtheorem{corollary}[theorem]{Corollaire}
\newtheorem{prop-def}[theorem]{Définition-Proposition}
\newtheorem{remarque}[theorem]{Remarque}

\newtheorem{lemma}[theorem]{Lemme}

\newcommand{\Q}{ {\mathbb Q} }
\newcommand{\R}{ {\mathbb R} }

\newcommand{\C}{ {\mathbb C} }
\newcommand{\Z}{ {\mathbb Z} }
\newcommand{\N}{ {\mathbb N} }
\newcommand{\PP}{ {\mathbb P} }
\newcommand{\K}{ {\Bbbk} }
\title{Sur certains espaces de configurations associés aux sous-groupes finis de $\mathrm{PSL}_2(\C) $}
\author{Mohamad MAASSARANI}
\address{IRMA, Université de Strasbourg, 7 rue René Descartes, 67084
Strasbourg, France}
\email{maassarani@math.unistra.fr}
\begin{document}
\maketitle
\begin{abstract}
On étudie des espaces de configurations $\mathrm{Cf}_G(n,\PP^1_*)$ liés à l'action d'un groupe fini d'homographies $G$ de $\PP^1$ $(n\in \N^{*})$. On construit une connexion plate sur cet espace à valeurs dans une algèbre de Lie $\hat{\mathfrak{p}}_n(G) $. On établit un isomorphisme d'algèbres de Lie filtrées entre $\hat{\mathfrak{p}}_n(G)$, l'algèbre de Lie de Malcev du groupe fondamental de cet espace et le complété pour le degré du gradué associé à cette algèbre de Lie. Ceci est obtenu grâce à la représentation de monodromie d'une connexion et une étude du groupe fondamental. 
\end{abstract}

\section*{introduction}
L'un des invariants associés à un espace topologique $X$ en homotopie rationelle est son modèle minimal. Le calcul du modèle minimal de $X$, plus précisement du 1-modèle minimal, permet d'obtenir l'algèbre de Lie de Malcev de $\pi_1(X)$, le groupe fondamental de $X$, par un processus de dualisation. Dans \cite{FM}, Fulton et MacPherson calculent explicitement des modèles des espaces de configurations $\mathrm{Cf}_n(X)=\{(p_1,\cdots,p_n)\in X^n\vert p_i\neq p_j \text{ si } i\neq j \}$, pour $X$ une variété projective complexe lisse. Ces modèles sont ensuite simplifiés dans \cite{Kriz}, puis utilisés par Bezrukavnikov (\cite{bezr}) qui obtient une présentation de l'algèbre de Lie $\mathrm{Lie}(\pi_1(\mathrm{Cf}_n(S))$ de Malcev de $\pi_1(\mathrm{Cf}_n(S))$ pour $S$ une surface de genre supérieur à un.\\
Une approche alternative, motivée par \cite{Dr}, repose sur l'utilisation de connexions plates et d'informations sur le groupe fondamental. En utilisant cette approche, différents résultats sont obtenus :
\begin{enumerate}
\item calcul de l'algèbre de Lie de Malcev de $\mathrm{Cf}_n(S)$ pour $S$ de genre $g(S)=1$ (\cite{CEE}) puis en genre $g(S)>1$ (\cite{BE3}) ; ce qui donne une autre démonstration aux présentations obtenues par Bezrukavnikov.
\item  calcul de l'algèbre de Lie de Malcev d'"espaces de 
configurations d'orbites", au sens de \cite{CKX}, pour les groupes des racines de l'unité opérant sur $\C^\times$ (\cite{BE}).
\end{enumerate}
Dans ce papier, on
considère plus généralement $G$ un groupe fini d'homographies agissant sur $\PP^1$ et l'espace associé :
$$\mathrm{Cf}_G(n,\PP^1_*)=\{(p_1,\cdots,p_n) \in (\PP^1_*)^n \vert p_i \neq g \cdot p_j ; \text{ pour } i\neq j \text{ et } g\in G \},$$
dans lequel $\PP^1_*$ est l'ensemble des points de $\PP^1$ à stabilisateur trivial pour $G$.
En utilisant la méthode des connexions plates, on calcule une présentation de l'algèbre de Lie de Malcev de $\pi_1(\mathrm{Cf}_G(n,\PP^1_*))$ et on montre (théorème \ref{Th}) que cette algèbre de Lie est isomorphe à la complétion pour le degré de son gradué associé qui coïncide avec une algèbre de Lie explicite $\hat{\mathfrak{p}}_n(G)$ (définition \ref{def AL}). On obtient par ailleurs la 1-formalité de $\mathrm{Cf}_G(n,\PP^1_*)$.\\\\
Détaillons les étapes permettant d'obtenir ce résultat. Dans la première section, on définit une algèbre de Lie $\mathfrak{p}_n(G)$, puis on construit une connexion plate sur $\mathrm{Cf}_G(n,\PP^1_*)$ à valeurs dans $\mathfrak{p}_n(G)$. Cette connexion nous donne une représentation de monodromie $\rho_{\tilde{\underline{q}}_n}:\pi_1(\mathrm{Cf}_G(n,\PP^1_*))\longrightarrow \mathcal{G}(\mathrm{U}\mathfrak{p}_n(G)(\C))$, où $\mathcal{G}$ est le foncteur qui à une algèbre de Hopf associe le groupe de ses éléments diagonaux. \\
On rappelle en section 2 quelques notions de topologie différentielle qui seront utilisées dans la section 3, laquelle est consacrée à l'étude du groupe fondamental d'un espace de configurations d'orbites associé à une surface munie d'une action d'un groupe fini. Dans cette section, on donne notamment une famille génératrice de $\Gamma_n:=\pi_1(\mathrm{Cf}_G(n,\PP^1_*))$ et des relations entre ces éléments de $\Gamma_n$.\\
La quatrième section est consacrée à des rappels de notions liées aux algèbres de Lie de Malcev et aux algèbres de Hopf complètes.\\
Dans la section 5, on utilise le morphisme de monodromie de la section 1 pour construire un morphisme $\mathrm{Lie}(\rho)$ de l'algèbre de Lie de Malcev $\mathrm{Lie}(\Gamma_n(\C))$ de $\Gamma_n$ sur $\C$ dans $\widehat{\mathfrak{p}}_n(G)(\C)$. D'autre part, on obtient grâce aux générateurs et relations de $\Gamma_n$ un morphisme $\phi_\C : \widehat{\mathfrak{p}}_n(G)(\C) \to \widehat{\mathrm{gr}}\mathrm{Lie}(\Gamma_n(\C)) $, où l'espace d'arrivée est le complété pour le degré du gradué associé de $\mathrm{Lie}(\Gamma_n(\C))$. En examinant la composée de $\mathrm{Lie}(\rho)$ avec $ \phi_\C$ , on conclut que les trois algèbres de Lie $\mathrm{Lie}(\Gamma_n(\C))$, $\widehat{\mathrm{gr}}\mathrm{Lie}(\Gamma_n(\C))$ et $\widehat{\mathfrak{p}}_n(G)(\C)$ sont isomorphes en tant qu'algèbres de Lie filtrées.\\
Enfin, la dernière section, on construit des torseurs dont la composée $ \phi_\C \circ \mathrm{Lie}(\rho)$  de la section 5 est un point complexe. Ensuite, on utilise un résultat sur l'existence de points rationnels de ces torseurs pour déduire que $\mathrm{Lie}(\Gamma_n(\Q))$, $\widehat{\mathrm{gr}}\mathrm{Lie}(\Gamma_n(\Q))$ et $\widehat{\mathfrak{p}}_n(G)(\Q)$ sont isomorphes comme algèbres de Lie filtrées. \\
Notons que la 1-formalité des espaces $\mathrm{Cf}_G(n,\PP^1_*)$ est également une conséquence du résultat principal de \cite{Koh}, et dans le cas ou $G$ est un groupe de racines de l'unité, une présentation de l'algèbre d'holonomie peut également être déduite de ce résultat.

\section{ Connexion sur l'espace de configuration  $\mathrm{Cf}_G(n,\PP^1_*)$ et représentation de monodromie.}
Dans cette section, on considère une action d'un groupe fini $G$ sur $\PP^1$ (sect. 1.1). On lui associe un espace de configuration $\mathrm{Cf}_G(n,\PP^1_*)$ (sect. 1.4) et une algèbre de Lie $\mathfrak{p}_n(G)$ (sect. 1.2). Après des rappels sur les connexions formelles (sect. 1.3), on définit une telle structure sur $\mathrm{Cf}_G(n,\PP^1_*)$ associée à l'algèbre de Lie $\mathfrak{p}_n(G)$  (sect. 1.4) et on montre sa platitude (sect. 1.5). On calcule alors les termes de bas degré de la représentation de monodromie associée (sect. 1.6). 
\subsection{Le groupe $G$ opérant sur $\PP^1$.}

\subsubsection{Action de $G$ sur $\PP^1$}\label{sec action} On a la suite de morphismes de groupes suivante :

\[
\xymatrix{
     \rm{SO}_3(\R) \simeq \rm{PSU}_2(\C)\ar@{^{(}->}[r]   & \mathrm{PSL}_2(\C)  \\
      }
  \]
  Par ailleurs, on a une action $ \rm{PSL}_2(\C)\to \rm{Aut}(\PP^1)$ par homographies et une action $\rm{SO}_3(\R) \to \rm{Aut}(S^2)$ par rotations. Enfin, il existe un identification $\PP^1 \simeq S^2$ compatible aux actions. L'action de $\rm{SO}_3(\R)$ sur $S^2$ commute à l'antipode. De façon analogue, l'action de $\rm{PSU}_2(\C)$ sur $\PP^1$ commute à l'involution $z\mapsto \mathrm{at}(z):=\frac{-1}{\bar{z}}$. Dans la suite, on fixe un sous groupe fini $G$ de $\rm{SO}_3(\R) \simeq \rm{PSU}_2(\C)$.
On sait que $G$ est soit cyclique ou diédral, soit isomorphe à un des groupes d'isométries des solides platoniciens $\mathfrak{A}_4$, $\mathfrak{S}_4$, $\mathfrak{A}_5$.
\subsubsection{Points fixes et stabilisateurs}
On note $\PP^1_*$ l'ensemble des points de $\PP^1$ à stabilisateur trivial pour $G$.
\begin{proposition}
\label{lem orbite} Pour tout $g\in G$ et $p\in \PP^1$, on pose $\mathrm{Fix}(g)=\{q\in \PP^1\: \vert \: g\cdot q=q \:\}$ et on note $\mathrm{stab}(p)$ le stabilisateur de $p$ pour l'action de $G$ sur $\PP^1$. Alors :

\begin{enumerate}
\baselineskip 18pt 
\item L'application $\mathrm{at}$ se restreint en une involution de $\PP^1\setminus \PP^1_*$. Pour tout $g\neq 1$, $\mathrm{Fix}(g)$ est de la forme $\{p,\mathrm{at}(p)\}$ avec $p\neq \mathrm{at}(p)$.  
\item Il existe un sous-ensemble fini $\mathfrak{p}$ de $\PP^1\setminus \PP^1_* $ satisfaisant $\PP^1\setminus \PP^1_*=\mathfrak{p}\sqcup \mathrm{at}(\mathfrak{p})$ et $G\setminus \{1\}= \underset{p \in \mathfrak{p}}{\sqcup} (\mathrm{stab}(p)\setminus \{1\})$.
\end{enumerate}
\end{proposition}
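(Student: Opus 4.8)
The plan is to base everything on the single observation that the $G$-action on $\PP^1$ commutes with the involution $\mathrm{at}$. First, for $g\in G$ and $q\in\PP^1$ one has $g\cdot q=q$ if and only if $g\cdot\mathrm{at}(q)=\mathrm{at}(g\cdot q)=\mathrm{at}(q)$, so $\mathrm{stab}(q)=\mathrm{stab}(\mathrm{at}(q))$ for every $q$; in particular $q\in\PP^1_*$ if and only if $\mathrm{at}(q)\in\PP^1_*$, i.e. $\mathrm{at}$ preserves $\PP^1_*$ and hence its complement. Since $\mathrm{at}$ is an involution of all of $\PP^1$ with no fixed point (the equation $z=-1/\bar{z}$ forces $|z|^2=-1$, and $\infty\mapsto 0$), it restricts to a fixed-point-free involution of $\PP^1\setminus\PP^1_*$; this is the first claim of (1).

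Next, fix $g\in G$ with $g\neq 1$. Under the identification $\PP^1\simeq S^2$ compatible with the actions (under which $\mathrm{at}$ corresponds to the antipode), $g$ becomes a nontrivial rotation of the sphere and hence has exactly two fixed points, namely the two antipodal poles of its rotation axis; equivalently, lifting $g$ to $\mathrm{SU}_2$, its trace is real with $|\mathrm{tr}(g)|\leq 2$ and the discriminant $\mathrm{tr}(g)^2-4$ of $\lambda^2-\mathrm{tr}(g)\lambda+1$ vanishes only for $g=\pm I$, so a nontrivial $g$ has two distinct eigenvalues and exactly two fixed points on $\PP^1$. Since $\mathrm{Fix}(g)$ is stable under $\mathrm{at}$ by the commutation above and $\mathrm{at}$ has no fixed point, $\mathrm{Fix}(g)=\{p,\mathrm{at}(p)\}$ with $p\neq\mathrm{at}(p)$, which completes (1). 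Observe moreover that every $q\in\mathrm{Fix}(g)$ satisfies $g\in\mathrm{stab}(q)\setminus\{1\}$, so $\mathrm{Fix}(g)\subseteq\PP^1\setminus\PP^1_*$, while conversely $\PP^1\setminus\PP^1_*=\bigcup_{g\neq 1}\mathrm{Fix}(g)$; as $G$ is finite this shows in passing that $\PP^1\setminus\PP^1_*$ is finite.

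For (2) I will take $\mathfrak{p}$ to be a complete set of representatives of the orbits of the free $\mathrm{at}$-action on the finite set $\PP^1\setminus\PP^1_*$; then $\PP^1\setminus\PP^1_*=\mathfrak{p}\sqcup\mathrm{at}(\mathfrak{p})$ is immediate, since the orbits are the pairs $\{p,\mathrm{at}(p)\}$ and $\mathrm{at}$ being fixed-point free yields the disjointness. It remains to check the decomposition of $G\setminus\{1\}$. If $g\neq 1$, then by (1) $\mathrm{Fix}(g)=\{q,\mathrm{at}(q)\}$ is one of these orbits; letting $p\in\mathfrak{p}$ be its representative and using $\mathrm{stab}(q)=\mathrm{stab}(\mathrm{at}(q))=\mathrm{stab}(p)$, we get $g\in\mathrm{stab}(p)\setminus\{1\}$, so the union on the right covers $G\setminus\{1\}$. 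For disjointness, suppose $g\neq 1$ lies in $\mathrm{stab}(p)\cap\mathrm{stab}(p')$ with $p,p'\in\mathfrak{p}$; then $p$ and $p'$ both lie in $\mathrm{Fix}(g)=\{p,\mathrm{at}(p)\}$, so $p'\in\{p,\mathrm{at}(p)\}$, and since $\mathfrak{p}$ contains only one point of each $\mathrm{at}$-orbit, $p'=p$. Hence $G\setminus\{1\}=\bigsqcup_{p\in\mathfrak{p}}(\mathrm{stab}(p)\setminus\{1\})$.

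No step is a serious obstacle; the two points deserving attention are the exact count of fixed points of a nontrivial element — this is precisely where the hypothesis $G\subset\mathrm{PSU}_2$ (ellipticity) is used, since a parabolic homography has a single fixed point and would spoil the $\mathrm{at}$-pairing — and the bookkeeping in (2) ensuring that each nontrivial $g\in G$ is recorded in the stabilizer of exactly one element of $\mathfrak{p}$.
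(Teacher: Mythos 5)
Your proof is correct and follows essentially the same route as the paper: transferring to the rotation/antipode picture on $S^2$, pairing the two fixed points of each nontrivial element via $\mathrm{at}$, choosing representatives of the $\mathrm{at}$-orbits for $\mathfrak{p}$, and deducing the disjointness of the stabilizers from $\vert\mathrm{Fix}(g)\vert=2$. You simply spell out more details (the fixed-point-freeness of $\mathrm{at}$, the $\mathrm{SU}_2$ eigenvalue count, the coverage of $G\setminus\{1\}$) that the paper leaves implicit, which is fine.
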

\begin{proof}
Il suffit de montrer la proposition pour $G$ un groupe fini de rotations de la sphère. Dans ce cadre, l'application $\mathrm{at}$ n'est autre que l'antipode de $S^2$. Ce qui montre (1). L'existence d'un $\mathfrak{p}$ fini satisfaisant $\PP^1\setminus \PP^1_*=\mathfrak{p}\sqcup \mathrm{at}(\mathfrak{p})$ est immédiate à partir de (1). Un tel ensemble satisfait automatiquement la dernière condition de (2). En effet, si l'intersection $\mathrm{stab}(p)\cap \mathrm{stab}(q)$ pour $p\neq q$ est différente de $\{1\}$, alors (1) nous mène à la contradiction $q\in\{p,\mathrm{at}(p)\}$. Ce qui montre la proposition. 
\end{proof}
\subsection{L'algèbre de Lie $\mathfrak{p}_n(G)$ } Soit $n$ un entier strictement positif et $\K$ un corps. On note $\mathcal{O}(p)$ l'orbite pour $G$ d'un point $p \in \PP^1$. 
\begin{definition}\label{def AL}
 On définit $\mathfrak{p}_n(G)(\K)$ comme la $\K$-algèbre de Lie engendrée par les éléments $ X_{ij}(g)$, pour $i\neq j \in [1,n]$ et $g \in G$, les $X_i(q)$ pour $i\in [1,n]$ et $q\in \PP^1\setminus \PP^1_*$ , soumis aux relations :  
\begin{equation} X_{ij}(g)=X_{ji}(g^{-1}) ,\text{ pour $i,j \in [1,n]$ distincts et $g\in G$,}\end{equation}

\begin{equation}\underset{q\in \PP^1\setminus \PP^1_*}{\sum} X_i(q) +\underset{ \underset{m\neq i}{m\in[1,n]} }{\sum} \: \underset{g\in G}{\sum} X_{im}(g)=0, \text{ pour $i\in [1,n]$}\end{equation} ,  
\begin{equation} [X_{ij}(g),X_{kl}(g')]=0, \text{ pour $i,j,k,l\in [1,n]$ distincts et $g,g'\in G$,}\end{equation} 

\begin{align} [X_{ij}(g),X_{kj}(g'g)+X_{ki}(g')]=[X_{i}(p),X_{jk}(g')]=0, &\text{ pour $i,j,k \in[1,n]$ distincts, } \endline 
&\text{$p\in  \PP^1\setminus\PP^1_*$ et $g,g'\in G$,}\end{align} 
\begin{equation} [X_{i}(p),X_{j}(q)]=0,\end{equation} 
\begin{equation}[X_{ij}(g),X_j(p) +X_i(g\cdot p) +\: \underset{h\in \mathrm{stab}(p)}{\sum} X_{ij}(gh)]=0,\end{equation} 
\begin{equation} [X_{j}(p),X_i(g\cdot p) +\: \underset{h\in \mathrm{stab}(p)}{\sum} X_{ij}(gh)]=0,\end{equation} 

pour $i,j\in [1,n]$ distincts, $g\in G$, $p \in \PP^1\setminus \PP^1_*$ et $q\in (\PP^1\setminus \PP^1_*)\setminus \mathcal{O}(p)$.
\end{definition}
L'algèbre $\mathfrak{p}_n(G)(\K)$ est munie d'une graduation pour laquelle chaque générateur $X_{ij}(g)$ est de degré 1. On a :
 \[ \mathfrak{p}_n(G)(\K)=\underset{k>0}{\bigoplus} \: \mathfrak{p}_n^k(G)(\K),\] où $\mathfrak{p}_n^k(G)(\K)$ est la composante homogène de degré $k$. On note $\hat{\mathfrak{p}}_n(G)(\K)$ la complétion de $\mathfrak{p}_n(G)(\K)$ pour le degré. \\
D'autre part, l'algèbre enveloppante $\mathrm{U}\mathfrak{p}_n(G)(\K)$ de $\mathfrak{p}_n(G)(\K)$,  hérite de $\mathfrak{p}_n(G)(\K)$ une structure d'algèbre graduée pour le degré. On note $\widehat{\mathrm{U}}\mathfrak{p}_n(G)(\K)$ la complétion de cette algèbre enveloppante pour le degré. L'algèbre $\mathfrak{p}_n(G)(\K)$ étant engendrée en degré un, la complétion de $\mathrm{U}\mathfrak{p}_n(G)(\K)$ pour le degré et la complétion pour les puissances de l'idéal d'augmentation coïncident. Enfin, $\widehat{\mathrm{U}}\mathfrak{p}_n(G)(\K)$ est une algèbre de Hopf complète.\\
Dans la suite on omettra dans les notations $G$ ou $\K$, si le contexte est clair.
\begin{remarque}Le groupe symétrique $\mathfrak{S}_n$ et le groupe $G^n$ agissent sur l'algèbre de Lie $\mathfrak{p}_n(G)$. Ces actions sont définies par :
$$\underline{g}\cdot X_{ij}(h)=X_{ij}(g_{i}hg_j^{-1}),\qquad \underline{g} \cdot X_{i}(q)=X_{i}(g_{i}\cdot q),$$
$$\sigma \cdot X_{ij}(h)=X_{\sigma(i)\sigma(j)}(h),\qquad \sigma \cdot X_{i}(q)=X_{\sigma(i)}(q),$$
pour $\underline{g}=(g_1,\cdots,g_n) \in G^n$ et $\sigma \in \mathfrak{S}_n$.  
\end{remarque}
\subsection{Connexions formelles}\label{sec formelle}
Dans cette sous-section, on passe en revue certaines notions sur les connexions formelles, leur platitude et les représentations de monodromie induites. Les résultats étant bien connus, on les donnera sans démonstration.\\\\
Soit $X$ une variété analytique complexe et  $A$  une $ \mathbb{C}$-algèbre complète unitaire graduée : $A={\prod}_{k\geq 0} A_k$ telle que $A_0=\mathbb{C}$, $A_k \cdot A_l \subset A_{k+l}$ pour $k,l \in \mathbb{N}$ et que les composantes homogènes soient de dimension finie. L'algèbre $A$ est supposée munie de la topologie produit. On note $\Omega^{\bullet}(X)$ l'algèbre des formes holomorphes sur $X$ et $\Omega^{\bullet}(X) \widehat{\otimes} A $ la complétion de l'algèbre $\Omega^{\bullet}(X) \otimes_\C A$ pour la filtration $\{ \Omega^{\bullet}(X) \otimes_\C A_{\geq k} \}_{k\geq0}$ (on note $A_k=\prod_{n\geq k}A_n$). On notera $\wedge$ le produit de $\Omega^{\bullet}(X) \widehat{\otimes} A $.
 On se donne aussi une 1-forme holomorphe sur $X$ à valeurs dans $A_{\geq 1}$ (i.e un élément de $\Omega^{1}(X) \widehat{\otimes} A $), qu'on notera $\omega$.
\begin{definition}\label{def formelle}
 Le triplet $(X,A,\omega)$ comme ci-dessus est appelé connexion formelle sur $X$, à valeurs dans $A$. Cette connexion est dite plate si $(d\widehat{\otimes} \mathrm{id}_A)(\omega)-\omega \wedge \omega=0$ 
\end{definition}

\begin{theorem}\label{exi solution}
Soit $(V,A,\Omega)$ une connexion formelle et $v$ un point de $V$. Si cette connexion est plate et $V$ est simplement connexe, alors l'équation :
\[ (d\widehat{\otimes} \mathrm{id}_A) F=\Omega\wedge F ,\]
d'inconnue $F \in \Omega^0(V) \widehat{\otimes} A$, admet une unique solution dans $ \Omega^0(V) \widehat{\otimes} A$, à condition initiale fixée $(F(v)=f \in A)$. Si $f$ est inversible dans $A$ $(f\in A^\times)$ , alors $F$ est à valeurs dans $A^\times$.
\end{theorem}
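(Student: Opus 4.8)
The plan is to construct $F$ by solving the equation degree by degree in the grading of $A$. Writing $F=\sum_{k\geq 0}F_k$ with $F_k\in\Omega^0(V)\otimes A_k$ and $\Omega=\sum_{k\geq 1}\Omega_k$ with $\Omega_k\in\Omega^1(V)\otimes A_k$ (these are genuine, uncompleted tensor products, since each $A_k$ is finite-dimensional), and projecting the equation $(d\widehat\otimes\mathrm{id}_A)F=\Omega\wedge F$ onto the degree-$k$ component of $A$, one obtains the triangular system
\[ dF_0=0,\qquad dF_k=\sum_{j=1}^{k}\Omega_j\wedge F_{k-j}\quad(k\geq 1),\]
whose right-hand side at level $k$ involves only $F_0,\dots,F_{k-1}$. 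So it suffices to prove, by induction on $k$, that once $F_0,\dots,F_{k-1}$ are built the $A_k$-valued holomorphic $1$-form $\beta_k:=\sum_{j=1}^{k}\Omega_j\wedge F_{k-j}$ is closed, hence, $V$ being simply connected, admits a holomorphic primitive $F_k$, unique up to an element of $A_k$ and therefore unique once one imposes $F_k(v)=f_k$ (the degree-$k$ component of $f$). For $k=0$ the claim is immediate: $V$ is connected, so $dF_0=0$ forces $F_0$ to be the constant $f_0$.

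The heart of the argument is the closedness of $\beta_k$, which I would deduce from flatness together with the induction hypothesis. Since $d\widehat\otimes\mathrm{id}_A$ is a graded derivation for $\wedge$, one has $d\beta_k=\sum_{j=1}^{k}(d\Omega_j)\wedge F_{k-j}-\sum_{j=1}^{k}\Omega_j\wedge dF_{k-j}$. Projecting the flatness identity $(d\widehat\otimes\mathrm{id}_A)(\omega)=\omega\wedge\omega$ onto degree $j$ gives $d\Omega_1=0$ and $d\Omega_j=\sum_{a+b=j,\ a,b\geq 1}\Omega_a\wedge\Omega_b$; and for $1\leq j\leq k-1$ the induction hypothesis gives $dF_{k-j}=\beta_{k-j}=\sum_{c=1}^{k-j}\Omega_c\wedge F_{k-j-c}$, while $dF_0=0$. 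Substituting, the first sum becomes $\sum_{a,b\geq 1,\ a+b\leq k}\Omega_a\wedge\Omega_b\wedge F_{k-a-b}$ and the second becomes $\sum_{j,c\geq 1,\ j+c\leq k}\Omega_j\wedge\Omega_c\wedge F_{k-j-c}$; by associativity of $\wedge$ these coincide, so $d\beta_k=0$. Passing from closed to exact uses that a closed holomorphic $1$-form (with values in a finite-dimensional vector space) on a simply connected complex manifold is exact — locally by the holomorphic Poincaré lemma, globally because the monodromy of analytic continuation of a local primitive along a loop vanishes when $\pi_1(V)$ is trivial. Uniqueness of $F$ then follows degree by degree, and the resulting $F=\sum_k F_k$ is by construction an element of $\Omega^0(V)\widehat\otimes A$.

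Finally, the invertibility statement: if $f\in A^\times$, then its degree-$0$ component $f_0$ is a nonzero scalar; since $F_0\equiv f_0$, the element $F-f_0$ lies in $\Omega^0(V)\widehat\otimes A_{\geq 1}$, so $F=f_0\bigl(1+f_0^{-1}(F-f_0)\bigr)$ is invertible in $\Omega^0(V)\widehat\otimes A$ (the geometric series converges by completeness of $A$), and in particular $F(x)\in A^\times$ for every $x\in V$. I expect the only genuinely non-formal point to be the "closed implies exact" step for holomorphic forms — i.e. the identification of the obstruction with a class in $H^1(V;\C)$ killed by simple connectivity; everything else is bookkeeping in the grading, and the cancellation making $d\beta_k=0$ drops out mechanically from flatness plus the inductive identity for $dF_{k-j}$.
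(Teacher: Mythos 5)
Votre démonstration est correcte : la construction degré par degré (système triangulaire, fermeture de $\beta_k$ par platitude et hypothèse de récurrence, primitive holomorphe sur $V$ simplement connexe, unicité par connexité, inversibilité via la composante de degré zéro et la série géométrique dans l'algèbre complète) est l'argument standard qui établit ce théorème. Le papier lui-même ne donne pas de preuve — la sous-section 1.3 est explicitement un rappel « sans démonstration » — donc votre rédaction coïncide avec la démonstration classique que l'auteur sous-entend, et elle n'appelle pas de correction.
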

Dans la suite de cette sous-section, on se donne une connexion plate  ($X$, $A$, $\omega$) comme dans la définition 1.4. On note $r : \tilde{X} \longrightarrow X$ le revêtement universel de $X$.\\
La connexion ($\tilde{X}$, $A$, $r^*(\omega)$) est une connexion plate sur un espace simplement connexe. On est dans le cadre du théorème 1.5. Etant donné un point $x_0$ de $\tilde{X}$, l'équation $dF=r^*(\omega) \wedge F$, $F(x_0)=1$ admet une unique solution qu'on notera $F(x,x_0)$.\\\\
Ainsi, on définit une application $\rho_{x_0} : \pi_1(X,r(x_0)) \longrightarrow 1+ A_{\geq 1}$ qui à $\gamma \in \pi_1(X,r(x_0)) $ associe $F(\gamma \cdot x_0 ,x_0)$, où $\gamma$ agit via le morphisme de monodropmie du revêtement.
\begin{proposition}
Soit $\gamma ,\: \gamma_1,\:\gamma_2  \in \pi_1(X,r(x_0)) $. On fait agir $\pi_1(X,r(x_0))$ à gauche sur $\tilde{X}$, via le morphisme de monodromie du revêtement. On a les égalités suivantes :
\begin{enumerate}
\baselineskip 18pt 
\item $F(\gamma \cdot x,  \gamma \cdot y)=F(x,y)$.
\item $F(x,y) F(y,z)=F(x,z) $.
\item $F((\gamma_1 \gamma_2)\cdot x_0, x_0)=F(\gamma_2 \cdot x_0 ,x_0) F(\gamma_1 \cdot x_0, x_0)$.
\end{enumerate} 
\end{proposition}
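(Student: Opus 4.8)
L'idée est de tout déduire de l'unicité des solutions fournie par le théorème \ref{exi solution}, appliqué à la connexion plate $(\tilde X, A, r^*\omega)$ sur l'espace simplement connexe $\tilde X$ : deux éléments de $\Omega^0(\tilde X)\widehat\otimes A$ vérifiant la même équation $dF = r^*\omega\wedge F$ et coïncidant en un point sont égaux. L'ingrédient géométrique est l'invariance de $r^*\omega$ sous l'action des transformations du revêtement : pour $\gamma\in\pi_1(X,r(x_0))$ agissant sur $\tilde X$ on a $r\circ\gamma = r$, donc $\gamma^*(r^*\omega) = (r\circ\gamma)^*\omega = r^*\omega$. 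Je commencerais par rappeler que, pour chaque $y\in\tilde X$, la fonction $x\mapsto F(x,y)$ est définie comme l'unique solution de cette équation valant $1$ en $y$ et qu'elle est à valeurs dans $A^\times$ (théorème \ref{exi solution}), l'unicité valant d'ailleurs pour n'importe quelle donnée initiale $f\in A$ en $y$.

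Pour (1), je poserais $G(x) := F(\gamma\cdot x,\gamma\cdot y)$, c'est-à-dire $G = \gamma^*\big(F(\,\cdot\,,\gamma\cdot y)\big)$, et je calculerais $dG$ en tirant par $\gamma$ l'équation satisfaite par $F(\,\cdot\,,\gamma\cdot y)$ : la compatibilité du tiré en arrière avec $d$ et avec $\wedge$, jointe à $\gamma^*(r^*\omega) = r^*\omega$, donne $dG = r^*\omega\wedge G$, et comme $G(y) = F(\gamma\cdot y,\gamma\cdot y) = 1$ l'unicité entraîne $G = F(\,\cdot\,,y)$. Pour (2), je fixerais $y,z$, je poserais $c := F(y,z)\in A^\times$ (constante en $x$) et $H(x) := F(x,y)\,c$ ; l'associativité du produit de $\Omega^\bullet(\tilde X)\widehat\otimes A$ donne $dH = \big(r^*\omega\wedge F(x,y)\big)\,c = r^*\omega\wedge H$, et puisque $H(y) = F(y,z)$ coïncide avec la valeur en $y$ de la solution $x\mapsto F(x,z)$ (qui vérifie la même équation), l'unicité donne $H = F(\,\cdot\,,z)$, soit (2). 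Enfin (3) est purement formel : l'action étant à gauche, $(\gamma_1\gamma_2)\cdot x_0 = \gamma_1\cdot(\gamma_2\cdot x_0)$, et en appliquant (2) avec $x = (\gamma_1\gamma_2)\cdot x_0$, $y = \gamma_1\cdot x_0$, $z = x_0$, puis (1) avec $\gamma = \gamma_1$, on obtient
\[ F\big((\gamma_1\gamma_2)\cdot x_0,\,x_0\big) = F\big(\gamma_1\cdot(\gamma_2\cdot x_0),\,\gamma_1\cdot x_0\big)\,F(\gamma_1\cdot x_0,\,x_0) = F(\gamma_2\cdot x_0,\,x_0)\,F(\gamma_1\cdot x_0,\,x_0). \]

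Je m'attends à ce que l'essentiel du travail soit formel ; les seuls points demandant un peu de soin sont les manipulations dans l'algèbre non commutative $\Omega^\bullet(\tilde X)\widehat\otimes A$ — compatibilité de $d$ et de $\wedge$ au tiré en arrière pour (1), associativité vis-à-vis de la multiplication à droite par une constante de $A$ pour (2) — ainsi que, pour (3), le fait que la convention d'action à gauche renverse l'ordre des facteurs (de sorte que $\gamma\mapsto F(\gamma\cdot x_0,x_0)$ soit un anti-morphisme), ce qui explique l'ordre dans l'énoncé.
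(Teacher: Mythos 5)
Votre démonstration est correcte et suit exactement la ligne attendue : le texte ne donne pas de preuve détaillée (résultats « bien connus »), se contentant d'indiquer que (3) repose sur (1) et (2), ce qui est précisément votre démarche. L'usage de l'unicité du théorème \ref{exi solution} avec l'invariance $\gamma^*(r^*\omega)=r^*\omega$ pour (1), la multiplication à droite par la constante $F(y,z)$ pour (2), puis la combinaison formelle pour (3) (avec le renversement d'ordre dû à l'action à gauche) est l'argument standard et complet.
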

La démonstration de (3) repose sur les points (1) et (2).\\
Cette proposition nous permet d'affirmer que si la connexion considérée est plate alors l'application $\rho_{x_0}$ associée induit un antimorphisme de groupe de $\pi_1(X, r(x_0))$ dans $1+A_{\geq 1}$. On dira que $\rho_{x_0}$  est la représentation de monodromie associée à la connexion en question en $x_0$.\\\\ 
On termine cette sous-section par un résultat dans le cas où $A$ est une algèbre de Hopf complète de coproduit noté $\Delta$. On note $\mathcal{P}(A)$ l'ensemble des primitifs de $A$ et $\mathcal{G}(A)=\{ a \in A \vert \Delta(a)=a \widehat{\otimes} a,\: a \in A^{\times}\}$ l'ensemble des éléments diagonaux.
\begin{theorem} 
Soit $(V, A ,\omega)$ une connexion formelle, où $V$ est simplement connexe. Supposons de plus que $A$ est une algèbre de Hopf complète, que $\omega$ est à valeurs dans $\mathcal{P}(A)$ et que la condition initiale de l'équation $(d\widehat{\otimes} \mathrm{id}_A) F=\Omega\wedge F$ est dans $\mathcal{G}(A)$. Alors la solution $F$ du théorème \ref{exi solution} est à valeurs dans $\mathcal{G}(A)$.
\end{theorem}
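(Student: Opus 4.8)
The plan is to deduce the statement from the uniqueness part of Theorem \ref{exi solution}, applied to a ``squared'' connection on $V$ with values in the completed tensor square $A \widehat{\otimes} A$. First I would observe that $A \widehat{\otimes} A$ is again a coefficient algebra of the kind allowed in Definition \ref{def formelle}: it is a complete graded $\C$-algebra, its degree-zero part is $\C \otimes \C = \C$, and its homogeneous components $(A \widehat{\otimes} A)_k = \bigoplus_{i+j=k} A_i \otimes A_j$ are finite dimensional. Denote by $\omega \widehat{\otimes} 1$ and $1 \widehat{\otimes} \omega$ the images of $\omega$ under the two canonical maps $\Omega^1(V) \widehat{\otimes} A \to \Omega^1(V) \widehat{\otimes} (A \widehat{\otimes} A)$. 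Since $\omega$ is valued in $\mathcal{P}(A)$ one has $(\mathrm{id}_\Omega \widehat{\otimes} \Delta)(\omega) = \omega \widehat{\otimes} 1 + 1 \widehat{\otimes} \omega$, which I will abbreviate $\Delta\omega$; it is a holomorphic $1$-form on $V$ with values in $(A \widehat{\otimes} A)_{\geq 1}$, so that $(V, A \widehat{\otimes} A, \Delta\omega)$ is a formal connection.

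Next I would check that this connection is flat. As $\Delta$ is a continuous morphism of algebras, $\mathrm{id}_\Omega \widehat{\otimes} \Delta$ commutes with $d \widehat{\otimes} \mathrm{id}$ and sends the wedge product of $\Omega^\bullet(V) \widehat{\otimes} A$ to that of $\Omega^\bullet(V) \widehat{\otimes} (A \widehat{\otimes} A)$; hence
$$ (d \widehat{\otimes} \mathrm{id})(\Delta\omega) - \Delta\omega \wedge \Delta\omega = (\mathrm{id}_\Omega \widehat{\otimes} \Delta)\bigl( (d \widehat{\otimes} \mathrm{id})(\omega) - \omega \wedge \omega \bigr) = 0 $$
by flatness of $(V, A, \omega)$.

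The core of the argument is to produce two solutions of $dG = \Delta\omega \wedge G$ on $V$ with the same initial value $f \widehat{\otimes} f$ at the base point $v$, and then to invoke uniqueness. Applying the algebra morphism $\mathrm{id}_\Omega \widehat{\otimes} \Delta$ to $dF = \omega \wedge F$ shows that $\Delta F := (\mathrm{id} \widehat{\otimes} \Delta)(F)$ is one such solution, with $\Delta F(v) = \Delta(f) = f \widehat{\otimes} f$ because $f \in \mathcal{G}(A)$. On the other hand, the graded Leibniz rule in $\Omega^\bullet(V) \widehat{\otimes} (A \widehat{\otimes} A)$, together with $dF = \omega \wedge F$, gives
\begin{align*}
d(F \widehat{\otimes} F) &= (dF) \widehat{\otimes} F + F \widehat{\otimes} (dF) = (\omega \wedge F) \widehat{\otimes} F + F \widehat{\otimes} (\omega \wedge F)\\
&= (\omega \widehat{\otimes} 1 + 1 \widehat{\otimes} \omega) \wedge (F \widehat{\otimes} F) = \Delta\omega \wedge (F \widehat{\otimes} F),
\end{align*}
so that $F \widehat{\otimes} F$, the section $v' \mapsto F(v') \widehat{\otimes} F(v')$, is a second solution, again with value $f \widehat{\otimes} f$ at $v$. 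Since $V$ is simply connected and $\Delta\omega$ is flat, the uniqueness clause of Theorem \ref{exi solution} forces $\Delta F = F \widehat{\otimes} F$ on all of $V$, i.e. $\Delta(F(v')) = F(v') \widehat{\otimes} F(v')$ for every $v' \in V$. Finally $f \in \mathcal{G}(A) \subset A^\times$, so the last assertion of Theorem \ref{exi solution} ensures that $F$ takes values in $A^\times$; combined with the previous identity, this says precisely that $F$ is valued in $\mathcal{G}(A)$.

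I do not expect a genuine obstacle in this argument. The one point requiring care is the bookkeeping with the three completed tensor products $\Omega^\bullet(V) \widehat{\otimes} A$, $A \widehat{\otimes} A$ and $\Omega^\bullet(V) \widehat{\otimes} (A \widehat{\otimes} A)$ — namely that $\mathrm{id}_\Omega \widehat{\otimes} \Delta$ is continuous and compatible both with $d$ and with the wedge products, so that all the identities above persist after completion — which is routine given that the homogeneous components of $A$ are finite dimensional.
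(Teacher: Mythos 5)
Votre démonstration est correcte, et c'est l'argument canonique que l'article sous-entend : le papier énonce ce théorème parmi les rappels de la sous-section 1.3 « sans démonstration », donc il n'y a pas de preuve interne à comparer. Votre schéma — appliquer $\mathrm{id}_\Omega\widehat{\otimes}\Delta$ à l'équation, constater que $\Delta F$ et $F\widehat{\otimes}F$ résolvent toutes deux $dG=\Delta\omega\wedge G$ sur $V$ simplement connexe avec la même condition initiale $f\widehat{\otimes}f$ (primitivité de $\omega$, caractère diagonal de $f$), puis conclure par l'unicité du théorème \ref{exi solution} appliqué à $(V,A\widehat{\otimes}A,\Delta\omega)$ et par l'inversibilité de $F$ — est exactement la bonne route ; notez seulement que la platitude de $(V,A,\omega)$, que vous utilisez pour transférer la platitude à $\Delta\omega$, est bien implicite dans l'énoncé puisque « la solution $F$ du théorème \ref{exi solution} » n'a de sens que sous cette hypothèse.
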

Ainsi, si $A$ est une algèbre de Hopf complète et $\omega$ est plate et à valeurs dans $\mathcal{P}(A)$, alors la représentation de monodromie associée est à valeurs dans $\mathcal{G}(A)$.  

\subsection{Connexion sur l'espace de configuration $\mathrm{Cf}_G(n,\PP^1_*)$}
On a une action  de $G$ sur $\PP^1$. En considérant cette action on définit l'espace de configurations $\mathrm{Cf}_G(n,\PP^1_*)=\{(p_1,\cdots,p_n) \in (\PP^1_*)^n \vert p_i \neq g p_j ; i\neq j, g\in G \}$.
On va construire une connexion formelle sur l'espace $\mathrm{Cf}_G(n,\PP^1_*)$, à valeurs dans l'algèbre $\widehat{\mathrm{U}}\mathfrak{p}_n(G)(\mathbb{C})$.
\begin{prop-def}\label{def forme}
Il existe une unique 1-forme holomorphe sur $\mathrm{Cf}_G(n,\PP^1_*)$ à valeurs dans $\widehat{\mathrm{U}}\mathfrak{p}_n(G)(\mathbb{C}) $, qu'on notera $\omega$, dont la restriction à $U:=(\PP^1 \setminus \{\infty \})^n\cap \mathrm{Cf}_G(n,\PP^1_*) $ est donnée par : 
\begin{equation}\label{forme 123}
\omega_{\vert U}=\underset{i \in[1,n]}{\sum}\:\: \underset{p\in \PP^1\setminus \PP^1_*}{\sum} d \mathrm{log} (z_i-p)\widehat{\otimes}  X_{i}(p)+\underset{i \neq j \in[1,n]}{\sum}\: \: \underset{g\in G}{\sum} \: \:d_{z_i}\mathrm{log}(z_i-g\cdot z_j) \widehat{\otimes}  X_{ij}(g),\end{equation}
où l'on pose $d\mathrm{log}(z-\infty)=0$.
\end{prop-def}
\begin{proof}
Soit $q \in \PP^1\setminus (\PP^1_*\cup \{\infty\})$. En utilisant la relation (2) de la définition \ref{def AL}, on vérifie que :
\[ \omega_{\vert U}=\underset{i \in[1,n]}{\sum}\:\: \underset{p\in \PP^1\setminus \PP^1_*}{\sum} d \mathrm{log} (\frac{z_i-p}{z_i-q})\widehat{\otimes}  X_{i}(p)+\underset{i \neq j \in[1,n]}{\sum}\: \: \underset{g\in G}{\sum} \: \:d_{z_i}\mathrm{log}(\frac{z_i-g\cdot z_j}{z_i-q}) \widehat{\otimes}  X_{ij}(g).\]
Or, toutes les 1-formes apparaissant dans cette écriture sont holomorphes sur $\mathrm{Cf}_G(n,\PP^1_*)$.  
\end{proof}
Notons que $\omega$ est à valeurs dans l'ensemble des primitifs de $\widehat{\mathrm{U}}\mathfrak{p}_n(G)(\mathbb{C}) $. Enfin, le triplet ($\mathrm{Cf}_G(n,\PP^1_*)$, $\widehat{\mathrm{U}}\mathfrak{p}_n(G)(\mathbb{C})$ , $\omega$) est une connexion formelle, au sens de la sous-section \ref{sec formelle}.
\subsection{Platitude de la connexion} 
Dans cette sous-section on montre que la connexion $\omega$ de la définition \ref{def forme} est plate. 
\begin{lemma}\label{rel forme}  
\begin{enumerate}
\baselineskip 18pt 
\item Soit $h$ un élément de $G$ , on a l'égalité suivante entre formes méromorphes sur $\C^3$  :
 \[\frac{dx\wedge dy}{(x-z)(y-h\cdot x  )}=\frac{dx\wedge dy}{(x- z)(y-h z)}-\frac{dx\wedge dy}{(x-h^{-1}\cdot y)(y-h\cdot z)}+\frac{dx}{x-h^{-1}\cdot y}\wedge \omega_h(y),\]
où $\omega_h(y)=\begin{cases}
 d_y \mathrm{log} (y-h\cdot \infty) & \text{si } h\cdot \infty \neq \infty \\ 
0 & \text{sinon}
\end{cases}.\\$
\item Soit $p\in \PP^1$ et supposons que $h \in \mathrm{stab}(p) \setminus \{1\}$. On a l'égalité suivante entre formes méromorphes sur $\C$ :
\[\frac{dz}{z-h\cdot z}+\frac{d z}{z-h^{-1} \cdot z}=\frac{dz}{z-p}+\frac{d z}{z-\mathrm{at}(p)},\]
où $\mathrm{at}$ est l'involution de la proposition \ref{lem orbite} et l'on pose $\frac{1}{z-\infty}=0$.
\end{enumerate}
\end{lemma}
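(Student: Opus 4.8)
The plan is to prove both identities by reducing to elementary computations with rational $1$-forms, exploiting partial fractions.

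For part (1), I would start from the rational function identity in the variable $x$ (with $y,z$ treated as parameters), namely the partial fraction decomposition
$$\frac{1}{(x-z)(y-h\cdot x)}=\frac{1}{(x-z)(y-h\cdot z)}-\frac{1}{(x-h^{-1}\cdot y)(y-h\cdot z)}\cdot\frac{\text{(something)}}{\text{(something)}},$$
being careful with the homography $h$. Concretely, write $h\cdot x=\frac{ax+b}{cx+d}$ with $ad-bc=1$, so that $y-h\cdot x=\frac{(y c-a)x+(yd-b)}{cx+d}$, and $h^{-1}\cdot y$ is the zero of this linear form in $x$. The key observation is that $\frac{dx}{x-z}$ and $\frac{dx}{x-h^{-1}\cdot y}$ differ from the naive expressions by the ``correction at infinity'' term $\omega_h(y)$, which is exactly the pole that $h\cdot x$ (as a function of $x$) contributes at $x=\infty$ when $h\cdot\infty\neq\infty$, i.e.\ when $c\neq 0$. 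Then wedging the one-variable identity in $x$ with $dy$ and collecting the $dy$-part coming from the $x$-dependence of $h^{-1}\cdot y$ produces the last term $\frac{dx}{x-h^{-1}\cdot y}\wedge\omega_h(y)$. I expect this bookkeeping of the point at infinity — tracking precisely when $h\cdot\infty=\infty$ versus $h\cdot\infty\neq\infty$ and matching it with the definition of $\omega_h$ — to be the main subtlety; everything else is a direct partial-fraction check, most transparently done by first verifying it for $h=\mathrm{id}$ (where $\omega_h=0$ and it is the classical Arnold/KZ relation) and then observing that applying a homography to the variable only shifts the residues and introduces the pole at infinity.

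For part (2), since $h\in\mathrm{stab}(p)\setminus\{1\}$, Proposition~\ref{lem orbite}(1) gives $\mathrm{Fix}(h)=\{p,\mathrm{at}(p)\}$ with $p\neq\mathrm{at}(p)$. I would write $h\cdot z=\frac{az+b}{cz+d}$ with $ad-bc=1$, so $z-h\cdot z=\frac{cz^2+(d-a)z-b}{cz+d}$, whose numerator is $c(z-p)(z-\mathrm{at}(p))$ (if $c\neq 0$) — here $p$ and $\mathrm{at}(p)$ are precisely the two fixed points. Hence
$$\frac{dz}{z-h\cdot z}=\frac{(cz+d)\,dz}{c(z-p)(z-\mathrm{at}(p))},$$
and a partial fraction decomposition of $\frac{cz+d}{c(z-p)(z-\mathrm{at}(p))}$ yields $\frac{\alpha}{z-p}+\frac{\beta}{z-\mathrm{at}(p)}$ with $\alpha+\beta=1$ (the coefficient of the top-degree term). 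Doing the same for $h^{-1}$, whose fixed points are also $\{p,\mathrm{at}(p)\}$, and adding, the residues at $p$ add to $\alpha+\alpha'$ and at $\mathrm{at}(p)$ to $\beta+\beta'$; one then checks $\alpha+\alpha'=1$ and $\beta+\beta'=1$ using that $h$ and $h^{-1}$ have reciprocal derivatives at each fixed point (the residue of $\frac{dz}{z-h\cdot z}$ at a fixed point $p$ equals $\frac{1}{1-(h^{-1})'(p)}$ up to sign, and the two contributions are complementary). The degenerate case $c=0$, i.e.\ $h\cdot\infty=\infty$, means $\infty$ is a fixed point, so $\{p,\mathrm{at}(p)\}=\{\infty,\ \text{finite point}\}$, and the convention $\frac{1}{z-\infty}=0$ makes the identity reduce to the affine case $h\cdot z=a^2 z+ab$, again a one-line partial fraction check. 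The only real work is the residue computation, which is routine.
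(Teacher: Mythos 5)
Your part (2) is correct and is in substance the paper's own argument: the paper writes $h(z)=\frac{az+b}{cz+d}$, observes that $p$ and $\mathrm{at}(p)$ are the two roots of $cz^2+(d-a)z-b$, and checks via Vieta that both sides equal $\frac{(2cz+d-a)\,dz}{cz^2+(d-a)z-b}$, treating the case $\infty\in\mathrm{Fix}(h)$ separately exactly as you do; your variant via the fixed-point residues $\frac{1}{1-h'(p)}+\frac{1}{1-(h^{-1})'(p)}=1$ (using $(h^{-1})'(p)=1/h'(p)\neq 1$) is an equally valid way of organizing the same computation.

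In part (1), however, the step that carries all the content is left open, and the two mechanisms you offer in its place do not hold up. The decomposition you display has a placeholder exactly where the second residue should be, and attributing the last term to ``the $dy$-part coming from the $x$-dependence of $h^{-1}\cdot y$'' cannot be right, since $h^{-1}\cdot y$ does not depend on $x$. What actually happens (and what the paper does) is the following: partial fractions in $x$ give
\[
\frac{dx\wedge dy}{(x-z)(y-h\cdot x)}=\frac{dx\wedge dy}{(x-z)(y-h\cdot z)}-\frac{dx\wedge d(h^{-1}\cdot y)}{(x-h^{-1}\cdot y)(h^{-1}\cdot y-z)},
\]
and then the identity $S\cdot\alpha-S\cdot\beta=\frac{(\alpha-\beta)\det S}{P_S(\alpha)P_S(\beta)}$, $P_S(w)=cw+d$, applied with $S=h^{-1}$, $\alpha=y$, $\beta=h\cdot z$, rewrites the $y$-part of the second term as
\[
\frac{d(h^{-1}\cdot y)}{h^{-1}\cdot y-z}=d_y\mathrm{log}(y-h\cdot z)-d_y\mathrm{log}\,P_{h^{-1}}(y),
\]
where $d_y\mathrm{log}\,P_{h^{-1}}(y)$ is exactly $\omega_h(y)$ ($P_{h^{-1}}$ vanishes at $y=h\cdot\infty$ and is constant when $h\cdot\infty=\infty$). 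Your fallback --- verify the case $h=\mathrm{id}$ and claim that ``applying a homography to the variable only shifts the residues and introduces the pole at infinity'' --- fails as stated: substituting $y=h\cdot u$ into the Arnold identity leaves the non-constant factor $P_{h^{-1}}(h\cdot x)/P_{h^{-1}}(y)$, because the Jacobian $d(h^{-1}\cdot y)=\det(h^{-1})\,dy/P_{h^{-1}}(y)^2$ does not cancel against the transformed denominator, and it is precisely the $y$-dependence of this factor, via $d_y\mathrm{log}\,P_{h^{-1}}(y)=\omega_h(y)$, that generates the correction term. So the ``bookkeeping at infinity'' you defer is not a routine check; it is the proof, and it requires the explicit conversion above (or an equivalent comparison of poles and residues in $y$).
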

\begin{proof}
Notons d'abord que si $S= \begin{pmatrix}a&b \\c&d  \end{pmatrix}  \in \mathrm{GL}_2(\C)$, alors :
\[S\cdot \alpha-S\cdot \beta=\frac{(\alpha-\beta)det(S)}{P_S(\alpha)P_S(\beta)}, \qquad \qquad (A_S)\]
où $S\cdot z= \frac{az+b}{cz+d}$ et $P_S(z)=cz+d$. Montrons la première égalité. Une décomposition en éléments simples, selon la variable $x$, donne : \[\frac{dx\wedge dy}{(x-z)(y-h\cdot x  )}=\frac{dx\wedge dy}{(x- z)(y-h\cdot z)}-\frac{dx\wedge d(h^{-1} \cdot y)}{(x-h^{-1}\cdot y)(h^{-1}\cdot y- z)}.\]
D'autre part, en utilisant $(A_S)$ pour $S=h^{-1},\alpha=y, \beta=h\cdot z$ et en appliquant $d_y\mathrm{log}$, on trouve : \[\frac{d(h^{-1}\cdot y)}{h^{-1}\cdot y-z}=d_y\mathrm{log}(h^{-1}\cdot y- z)=d_y\mathrm{log}( y-h\cdot z)- d_y\mathrm{log}(P_{h^{-1}}(y)).\]
Or, $d_y\mathrm{log}(P_{h^{-1}}(y))$ n'est autre que la forme $\omega_h(y)$ de l'énoncé, ce qui montre (1).\\\\
Montrons (2). On a vu (proposition \ref{lem orbite}) que $\vert \mathrm{Fix}(h)\vert =2$. Posons $h(z)=\frac{az+b}{cz+d}$ et $\mathrm{Fix}(h)=\{p,\mathrm{at}(p)\}$. On distinguera deux cas :  \\
Si $\infty \notin \mathrm{Fix}(h)$, on a $c\neq 0$ et $p ,\mathrm{at}(p)$ sont deux solutions distinctes de $cz^2+(d-a)z-b=0$. On en déduit que $p+\mathrm{at}(p)=\frac{a-d}{c}$ et que $p\cdot \mathrm{at}(p)=\frac{-b}{c}$. Par conséquent, on a :
\[ \frac{1}{z-p}+\frac{1}{z-\mathrm{at}(p)}= \frac{2z-(p+\mathrm{at}(p))}{(z-p)(z-\mathrm{at}(p))}=\frac{2cz+d-a}{cz^2+(d-a)z-b}.\]
En développant $\frac{1}{z-h\cdot z}+\frac{1}{z-h^{-1} \cdot z}$ avec $h(z)=\frac{az+b}{cz+d}$, on trouve (2) pour  $\infty \notin \mathrm{Fix}(h)$. \\
Si $\infty \in \mathrm{Fix}(h)$, $h$ est de la forme $h(z)=ez+f$ avec $e\neq 1$. Un calcul immediat donne l'égalité souhaité. Ce qui montre le lemme.
\end{proof}\
Montrons la platitude de la connexion :
\begin{proposition}
La connexion formelle $(\mathrm{Cf}_G(n,\PP^1_*),  \omega)$ est une connexion plate. La 1-forme $\omega$ vérifie : $(d\widehat{\otimes} \mathrm{id}_{\widehat{\mathrm{U}}\mathfrak{p}_n})\omega=\omega \wedge \omega =0$.
 \end{proposition}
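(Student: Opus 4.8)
Since $\omega$ has values in the degree-$1$ component of $\widehat{\mathrm{U}}\mathfrak{p}_n$ (for the grading in which every generator has degree $1$), the $2$-form $(d\widehat{\otimes}\mathrm{id})\omega$ has values in that same component, whereas, writing $\omega=\sum_a\alpha_a\widehat{\otimes} x_a$ with the $\alpha_a$ logarithmic $1$-forms and the $x_a$ generators, the antisymmetry of $\wedge$ on $1$-forms gives $\omega\wedge\omega=\sum_{\{a,b\}}(\alpha_a\wedge\alpha_b)\widehat{\otimes}[x_a,x_b]$, which has values in the degree-$2$ component. As these are distinct homogeneous summands of $\widehat{\mathrm{U}}\mathfrak{p}_n$, the Maurer--Cartan equation $(d\widehat{\otimes}\mathrm{id})\omega-\omega\wedge\omega=0$ is equivalent to the two separate identities $(d\widehat{\otimes}\mathrm{id})\omega=0$ and $\omega\wedge\omega=0$ --- which is exactly the assertion. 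Both members being holomorphic $2$-forms on the connected manifold $\mathrm{Cf}_G(n,\PP^1_*)$, and $U=(\PP^1\setminus\{\infty\})^n\cap\mathrm{Cf}_G(n,\PP^1_*)$ being a dense open subset of it, it suffices by analytic continuation to prove the two identities after restriction to $U$, where $\omega$ is given by \eqref{forme 123}.

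\emph{Closedness of $\omega$.} Grouping the terms of \eqref{forme 123} by generator: the coefficient of $X_i(p)$ is $d\log(z_i-p)$, which is closed; the coefficient of $X_{ij}(g)$, once the $(i,j,g)$- and $(j,i,g^{-1})$-contributions are merged by relation $(1)$ of Definition~\ref{def AL}, equals $d_{z_i}\log(z_i-g\cdot z_j)+d_{z_j}\log(z_j-g^{-1}\cdot z_i)$, and writing $z_i=g\cdot(g^{-1}\cdot z_i)$ and applying the identity $(A_S)$ from the proof of Lemma~\ref{rel forme} shows that this $1$-form is closed too. Hence $(d\widehat{\otimes}\mathrm{id})\omega_{|U}=0$.

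\emph{Vanishing of $\omega\wedge\omega$.} Every $1$-form occurring in \eqref{forme 123} is a multiple of a single $dz_k$, so $\omega_{|U}\wedge\omega_{|U}$ breaks up as a sum, over unordered pairs $\{i,j\}\subset[1,n]$, of contributions proportional to $dz_i\wedge dz_j$, each being a sum --- over the remaining points of $\PP^1\setminus\PP^1_*$ and elements of $G$ --- of a rational $2$-form tensored with a bracket of two generators. I would show that each such $dz_i\wedge dz_j$-coefficient vanishes by sorting the brackets according to the overlap of the indices of the two generators:
\begin{enumerate}
\item four distinct indices, i.e. brackets $[X_{kl}(g),X_{k'l'}(g')]$ with $\{k,l\}\cap\{k',l'\}=\emptyset$: these vanish already in $\mathfrak{p}_n(G)$ by relation $(3)$;
\item three distinct indices, i.e. brackets of type $[X_{ij}(g),X_{jk}(g')]$, $[X_{ij}(g),X_{ik}(g')]$, or $[X_i(p),X_{jk}(g')]$: the two logarithmic $1$-forms involved share one variable, and I would use part $(1)$ of Lemma~\ref{rel forme} to rewrite their product $\dfrac{dx\wedge dy}{(x-z)(y-h\cdot x)}$ as a variable-separated $2$-form, a similar $2$-form with the variables exchanged, and a correction term $\dfrac{dx}{x-h^{-1}\cdot y}\wedge\omega_h(y)$; after summing over $g\in G$ the separated and exchanged pieces recombine, while the correction terms, summed over the free index, are rewritten by relation $(2)$ in terms of the brackets treated in the two-index case, with which they combine, and the residual coefficients are killed by relations $(4)$ and $(1)$;
\item exactly two distinct indices $\{i,j\}$, i.e. brackets $[X_i(p),X_j(q)]$, $[X_i(p),X_{ij}(g)]$ (and its mirror $[X_j(p),X_{ij}(g)]$) and $[X_{ij}(g),X_{ij}(g')]$: if $q\notin\mathcal{O}(p)$ then $[X_i(p),X_j(q)]=0$ by relation $(5)$; otherwise all these brackets feed into the same $dz_i\wedge dz_j$-coefficient, and I would use part $(2)$ of Lemma~\ref{rel forme}, which rewrites $\dfrac{dz}{z-h\cdot z}+\dfrac{dz}{z-h^{-1}\cdot z}$ in terms of the two fixed points $p,\mathrm{at}(p)$ of $h$, together with ordinary partial fractions, to express every $2$-form in a common family of linearly independent symbols; summing over $h\in\mathrm{stab}(p)$ and $g\in G$ and invoking relations $(6)$ and $(7)$ (and $(1)$, $(2)$) then gives $0$.
\end{enumerate}
The common mechanism is that Lemma~\ref{rel forme} turns a product of two logarithmic $1$-forms sharing a variable into forms whose $2$-form coefficients are linearly independent, whereupon the defining relations of $\mathfrak{p}_n(G)$ force the accompanying Lie-algebra elements to vanish; this is the incarnation, for the present ``diagonal'' arrangement $\{z_i=g\cdot z_j\}\cup\{z_i=p\}$, of the standard codimension-two criterion for flatness of a logarithmic connection.

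\emph{Expected main obstacle.} The four-index case and the case $q\notin\mathcal{O}(p)$ are immediate, and the closedness of $\omega$ is routine once the symmetrization by relation $(1)$ is performed; the real effort lies in cases $(2)$ and $(3)$, which moreover interact through the correction terms: the forms $\omega_h$ and the sums over $\mathrm{stab}(p)$ produced by Lemma~\ref{rel forme} must be paired precisely with the more intricate relations $(4)$, $(6)$ and $(7)$, and one must keep rigorous track of which of the many $2$-forms occurring are genuinely independent, so as never to assume an unjustified cancellation. That bookkeeping is the single delicate point; everything else is formal.
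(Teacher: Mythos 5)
Your proposal follows essentially the same route as the paper's own proof: restriction to the dense open set $U$, closedness of $\omega$ after symmetrising the $X_{ij}(g)$-terms via relation (1) de la définition~\ref{def AL}, and vanishing of $\omega\wedge\omega$ by splitting the coefficient of each $dz_i\wedge dz_k$ according to the overlap of indices and applying le lemme~\ref{rel forme} (parties (1) et (2)) together with the defining relations --- including your correct observation that the correction terms at $\infty$ are killed by the linear relation (2) and the remaining ones by (4), (6), (7). The only difference is one of completeness, not of method: the paper carries out explicitly the recombination you defer as bookkeeping (its terms $A,B,C,D,E_j^1,E_j^2,E_j^3$ and their successive reductions), and your outline assigns the right relation to each family of terms.
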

\begin{proof} Il suffit de montrer ces égalités sur $U=(\PP^1 \setminus \{\infty \})^n\cap \mathrm{Cf}_G(n,\PP^1_*) $. Les calculs seront effectués dans $ \mathcal{M}(U) \widehat{\otimes} \widehat{\mathrm{U}}\mathfrak{p}_n(G)(\mathbb{C})$, où $ \mathcal{M}(U)$ est l'algèbre des formes méromorphes sur $U$. On note $X w$ l'élément $w\widehat{\otimes}X \in \mathcal{M}(U) \widehat{\otimes}\widehat{\mathrm{U}}\mathfrak{p}_n(G)(\mathbb{C})$. Pour simplifier la preuve, on pose $\mathbb{P}=\PP^1\setminus \PP^1_*$ et on utilisera la convention $\frac{1}{z-\infty}=0$.\\\\
La relation (1) de la définition \ref{def AL} donne : \[\omega=\underset{i \in[1,n]}{\sum}\:\: \underset{p\in \mathbb{P}}{\sum} \frac{X_{i}(p)dz_i}{z_i-p}  +\underset{i< j}{\sum} \: \underset{g \in G}{\sum} X_{ij}(g)[ d_{z_i}\mathrm{ log}(z_i-g \cdot z_j)+  d_{z_j}\mathrm{log}(z_j-g^{-1}\cdot z_i)].\]
En posant $g(z)=\frac{a_g z+b_g}{c_g z +d_g}$ et en simplifiant l'expression ci-dessus, on trouve : 
 \[\omega= \underset{i \in[1,n]}{\sum}\:\: \underset{p\in \mathbb{P}}{\sum} \frac{X_{i}(p)dz_i}{z_i-p}+ \underset{i< j}{\sum} \: \underset{g \in G}{\sum} X_{ij}(g) d \mathrm{log}(f_g^{ij}),\]
où $f_g^{ij}=z_i(c_g z_j +d_g)-(a_g z_j +b_g)$. Comme $d\circ d=0$ on en déduit que  $d\omega =0$, .\\\\
Passons à l'étude du terme $ \omega \wedge \omega$. On note $\omega^l$ la partie du membre de droite de (\ref{forme 123}) contenant $ dz_l$. Alors $\omega$ est donnée par :
\begin{equation}\label{decomp omega}
\omega= \overset{n}{\underset{l=1}{\sum}}\: \: \omega^l.
\end{equation}
En utilisant les relations $[X_{ij}(g),X_{kl}(h)]=[X_{i}(q),X_{kl}(h)]=0$  (voir définition \ref{def AL}), on trouve pour $i\neq k$:
\begin{equation}\label{decomp omega wedge}
 \omega^i\wedge \omega^k+\omega^k\wedge \omega^i=(A+B+C+D+ \underset{j\vert j \notin\{i,k\}}{\sum}(E_{j}^1+E_{j}^2+E_{j}^3)) dz_i\wedge dz_k,
 \end{equation}
où 

\[A=\underset{p\in \mathbb{P}}{\sum} \: \underset{q\in \mathcal{O}(p)}{\sum} \frac{[X_i(p),X_k(q)]}{(z_i-p)(z_k-q)},\qquad \qquad B=\underset{p\in \mathbb{P},g \in G}{\sum} \frac{[X_i(p),X_{ki}(g)]}{(z_i-p)(z_k-g\cdot z_i)} \]

\[C=\underset{p\in \mathbb{P},g \in G}{\sum} \frac{[X_{ik}(g),X_{k}(p)]}{(z_i-g\cdot z_k)(z_k-p)} , \qquad \qquad 
  D=\underset{gh\neq 1}{\sum} \frac{[X_{ik}(g),X_{ki}(h)]}{(z_i- g \cdot z_k)(z_k-h\cdot z_i)}\]

\[E_j^1= \underset{g,h\in G}{\sum}\frac{[X_{ik}(g), X_{kj}(h)]}{(z_i-g\cdot z_k)(z_k-h\cdot z_j)}, \qquad \qquad 
E_j^2= \underset{g,h\in G}{\sum}\frac{[X_{ij}(g),X_{ki}(h)]}{(z_i-g\cdot z_j)(z_k-h\cdot z_i)},\]

\[E_j^3= \underset{g,h\in G}{\sum}\frac{[X_{ij}(g),X_{kj}(h)]}{(z_i-g\cdot z_j)(z_k-h\cdot z_j)}. \]

On va décomposer le termes $B,D$ et $E^2_j$. En appliquant (1)  du lemme \ref{rel forme}, on peut décomposer $E_j^2$ :
\begin{align*}
E_j^2=\underset{g,h}{\sum} \frac{[X_{ij}(g),X_{ki}(h)]}{(z_i-g\cdot z_j)(z_k-hg\cdot z_j)}+\frac{-[X_{ij}(g),X_{ki}(h)]}{(z_i-h^{-1}\cdot z_k)(z_k-hg\cdot z_j)}+\frac{[X_{ij}(g),X_{ki}(h)] }{(z_i-h^{-1}\cdot z_k)(z_k - h\cdot \infty )}.
\end{align*}
On note, dans l'ordre d'appariton, $E_j^{21},E_j^{22},E_j^{23}$ les termes de cette décomposition. En réindexant les termes de $E_j^3$, on trouve : 
\begin{align*}
E_j^3+E_j^{21}&=\underset{g,h\in G}{\sum} \frac{[X_{ij}(g),X_{ki}(h)+X_{kj}(hg)]}{(z_i-g\cdot z_j)(z_k-hg\cdot z_j)}.
\end{align*}
De même, en réindexant les termes de $E_j^{22}$ et en utilisant la relation (1) de la définition \ref{def AL}, on trouve :
\begin{align*}
E_j^{22}+E_j^1=\underset{g,h\in G}{\sum} \frac{[X_{ik}(g),X_{jk}(h^{-1})+X_{ji}(h^{-1}g^{-1})]}{(z_i-g\cdot z_k)(z_k-h\cdot z_j)}.
\end{align*}
Les relations (4) dans $\mathfrak{p}_n(G)(\mathbb{C})$ impliquent que $E_j^3+E_j^{21}=E_j^{22}+E_j^1=0$. Ainsi, on peut réduire l'équation (\ref{decomp omega wedge}) à : 
\begin{equation}\label{wedgeik1}
 \omega^i\wedge \omega^k+\omega^k\wedge \omega^i=(A+B+C+D+ \underset{j\vert j \notin\{i,k\}}{\sum}E_{j}^{23}) dz_i\wedge dz_k, \: \text{pour} \: i\neq k.
\end{equation}
Décomposons $B$. En appliquant (1) du lemme \ref{rel forme}  à $B$, pour $z=z_i,y=z_k$ et $z=p$, on obtient :
\begin{align*}
B= \underset{p,g}{\sum} \frac{[X_i(p),X_{ki}(g)]}{(z_i-p)(z_k-g\cdot p)}+\underset{p ,g}{\sum} \frac{-[X_i(p),X_{ki}(g)]}{(z_i-g^{-1}\cdot z_k)(z_k-g\cdot p)} +\underset{p,h}{\sum} \frac{[X_i(p),X_{ki}(g)]}{(z_i-g^{-1}z_k)(z_k-g\cdot \infty)}.
\end{align*}
On note comme avant $B_1,B_2,B_3$ les termes de cette décomposition. Pour chaque $q \in \mathcal{O}(p)$, on choisi un $g_q\in G$ tel que $g_q\cdot p=q$ et on note $G_{p}$ l'ensemble des $g_q$. Ainsi, on peut écrire : 
\[A+B_1= \underset{p}{\sum} \ \ \underset{g\in G_{p}}{\sum}\:\: \underset{h\in \mathrm{stab}(p)}{\sum}\:  \frac{[X_i(p),X_k(g\cdot p) +X_{ki}(gh)]}{(z_i-p)(z_k-g\cdot p)}.\]
Ce terme est nul à cause de la relation (7) de la définition \ref{def AL}. Ce qui réduit (\ref{wedgeik1}) à :
\begin{equation}\label{wedgeik2}
\omega^i\wedge \omega^k+\omega^k\wedge \omega^i=(B_2+B_3+C+D+ \underset{j\vert j \notin\{i,k\}}{\sum}E_{j}^{23}) dz_i\wedge dz_k, \: \text{pour} \: i\neq k.
\end{equation}
Il nous reste à faire la décomposition de $D$. Le lemme \ref{rel forme} donne :
\[D=\underset{gh\neq 1}{\sum} (w_1(g,h)-w_2(g,h)) +\underset{h,g}{\sum} \: \frac{[ X_{ik}(g),X_{ki}(h)]}{(z_i-h^{-1}\cdot z_k)(z_k - h\cdot \infty )},\]
où 
\[w_1(g,h)=  \frac{[X_{ik}(g),X_{ki}(h)]}{(z_i- g \cdot z_k)(z_k-hg\cdot z_k)}\quad \text{et} \quad w_2(g,h)=\frac{[X_{ik}(g),X_{ki}(h)]}{(z_i- h^{-1} \cdot z_k)(z_k-hg\cdot z_k)}.\]
On pose $D_1=\underset{gh\neq 1}{\sum} (w_1(g,h)-w_2(g,h)) $ et $D_2=D-D_1$. 
Remarquons que la relation en degré un dans $\mathfrak{p}_n(G)$ implique que la somme $B_3+D_2 +\underset{j\vert j \notin\{i,k\}}{\sum}E_{j}^{23}$ est nulle. Donc  : 
\begin{equation}\label{wedgeik3}
\omega^i\wedge \omega^k+\omega^k\wedge \omega^i=(B_2+C+D_1) dz_i\wedge dz_k, \: \text{pour} \: i\neq k.
\end{equation}
On va montrer que $B_2+C+D_1$ est nulle. On peut transformer $D_1$, puis utiliser la relation (1) dans $\mathfrak{p}_n(G)$  pour trouver :
\begin{align*}
D_1&=\underset{g}{\sum}\:\:\underset{h\neq 1}{\sum} (w_1(g,(gh)^{-1})-w_2(gh,g^{-1}))\\
 &= \underset{g}{\sum} \frac{1}{z_i-g\cdot z_k}\:\underset{h\neq 1}{\sum} [X_{ik}(g),X_{ik}(gh)]( \frac{1}{z_k-h\cdot z_k}+\frac{1}{z_k-h^{-1} \cdot z_k}).
\end{align*}
De plus, en utilisant la proposition \ref{lem orbite} et (2) du lemme \ref{rel forme}, $D_1$ se simplifie : 
\begin{align*}
D_1&=\underset{g}{\sum} \frac{1}{z_i-g\cdot z_k}\:\underset{q\in \mathfrak{p}}{\sum} \:\:\underset{h \in \mathrm{stab}(q) \setminus\{1\}}{\sum} [X_{ik}(g),X_{ik}(gh)]( \frac{1}{z_k-h\cdot z_k}+\frac{1}{z_k-h^{-1} \cdot z_k})\\
&= \underset{g}{\sum} \frac{1}{z_i-g\cdot z_k}\:\underset{q\in \mathfrak{p}}{\sum}( \frac{1}{z_k-q}+\frac{1}{z_k-\mathrm{at}(q)})\underset{h \in \mathrm{stab}(q) \setminus\{1\}}{\sum} [X_{ik}(g),X_{ik}(gh)]\\
&=\underset{g,p}{\sum} \: \underset{h \in \mathrm{stab}(p) \setminus\{1\}}{\sum} \frac{[X_{ik}(g),X_{ik}(gh)]}{(z_i-g\cdot z_k)(z_k-p)}.
\end{align*}
Par une réindexation de $B_2$ et l'utilisation de la relation (1) dans $\mathfrak{p}_n(G)$, on trouve :
\[B_2+C+D_1=\underset{g,p}{\sum} \underset{h\in \mathrm{stab}(p)}{\sum} \frac{[X_{ik}(g),X_k(p)+X_i(g\cdot p)+X_{ik}(gh)]}{(z_i-g\cdot z_k)(z_k-p)}.\]
Le numérateur de la fraction est nul en vertu de la relation (6) dans $\mathfrak{p}_n(G)$. Ce qui montre que $ B_2+C+D_1=0$. En injectant cette égalité dans (\ref{wedgeik3}), on trouve que pour tout $i\neq k$, $\omega^i \wedge \omega^k +\omega^k\wedge \omega^i=0$. Les éléments $\omega^i \wedge \omega^i$ étant nuls par définition de $\wedge$ et compte tenu de l'équation (\ref{decomp omega}), on obtient que $\omega \wedge \omega=0$, ce qui termine la démonstration de la proposition. 
\end{proof}
\subsection{Représentation de monodromie}\label{rep monodromie}
 On fixe $\underline{q}_n=(q_1,\cdots, q_n)$ un $n$-uplet appartenant à $\mathrm{Cf}_G(n,\PP^1_*)$. On a vu que la connexion ($\mathrm{Cf}_G(n,\PP^1_*)$, $\widehat{\mathrm{U}}\mathfrak{p}_n(G)(\mathbb{C})$, $\omega$) est plate. De plus, $\omega$ est à valeurs dans les primitifs de $\widehat{\mathrm{U}}\mathfrak{p}_n(G)(\mathbb{C})$. Donc une représentation de monodromie associée à cette connexion est à valeurs dans $\mathcal{G}(\widehat{\mathrm{U}}\mathfrak{p}_n(G)(\mathbb{C}))$, où $\mathcal{G}$ désigne le groupe des éléments diagonaux.\\\\ 
Soit $r : \tilde{C} \longrightarrow  \mathrm{Cf}_G(n,\PP^1_*)$ le revêtement universel de $\mathrm{Cf}_G(n,\PP^1_*)$, $\tilde{\underline{q}}_n$ un point de la fibre au-dessus de $\underline{q}_n$. On notera $ \rho_{\tilde{\underline{q}}_n} : \pi_1(\mathrm{Cf}_G(n,\PP^1_*), \underline{q}_n) \longrightarrow \mathcal{G}(\widehat{\mathrm{U}}\mathfrak{p}_n(G)(\mathbb{C}))$ la représentation de monodromie obtenue associée à $\tilde{\underline{q}}_n$.\\\\ Pour $i\neq j \in [1,n] $ et $g\in G$, on définit $\gamma_{ij}^g$ (resp. $\gamma_i^p$) comme un lacet dans $\PP^1$, basé en $q_i$ et qui sépare la sphère en deux disques ouverts, le premier contenant le point $g\cdot q_j$ et le deuxième contenant les points différents de $q_i$ et $ g\cdot q_j$ (resp. differents de $q_i$ et $ p$)  appartenant à l'ensemble $ (\underset{l\in [1,n]}{\cup} G \cdot q_l)\cup (\PP^1\setminus \PP^1_*)$. On oriente ces lacets comme en figure 10. On définit alors le lacet $x_{ij}^g$ (resp. $x_i^q$) de $\mathrm{Cf}_G(n,\PP^1_*)$ basé en $\underline{q}_n$ comme étant l'image de $\gamma_{ij}^g$ (resp. $\gamma_i^q$) sous l'application :
\begin{align*}
\PP^1_* \setminus \underset{k \vert k\neq i}{\cup}G\cdot q_k \longrightarrow & \mathrm{Cf}_G(n,\PP^1_*) \\
x \longrightarrow & (q_1, \cdots, q_{i-1},x, q_{i+1}, \cdots,q_n).
\end{align*}
On va étudier l'image des classes des $x_{ij}^g$ et $x_{i}^q$ par cette représentation.

\begin{center}
\begin{tikzpicture}[scale=0.4]
\draw[thick] (0,0) ellipse (4 and 4);

\draw [ thick,>=stealth,-<](-0.1,-3.1) .. controls (1,-2.25) and (1.9,-2) .. (2.65,-0.15);
\draw [thick,>=stealth,-<]  (2.65,-0.15) .. controls (3.1,1.5) and (1.65,1.5) .. (1.45,0); 
\draw [thick]  (1.45,0) .. controls (1.3,-1) and (0.6,-2) .. (-0.1,-3.1) node{$\bullet$} node[left,above]{$q_i$};

\draw (2.15,0) node{$\bullet$};
\node at (0,-5) {\textbf{Figure 1} : Le lacet $\gamma_{ij}^g$ (ou $\gamma_i^p$), dans $S^2$ (vue d'extérieur) ; };
\node at (0.1,-6.5) {le point non indexé est $g .q_j$ (ou $p$)};

\end{tikzpicture} \end{center}
\begin{proposition}\label{monodromie}
Pour $1\leq i<j\leq n$, $g\in G$, $k\in [1,n]$ et $p\in \PP^1\setminus \PP^1_*$, on a les égalités suivantes :
\[ \rho_{\tilde{\underline{q}}_n}(x_{ij}^g)= 1-2i\pi X_{ij}(g)+R_{ij}(g), \qquad \qquad \rho_{\tilde{\underline{q}}_n}(x_{k}^p)=1-2i\pi X_{k}(q)+R_k(p),  \]
où $R_{ij}(g)$ et $R_k(p)$ ne comportent que des termes de degré supérieur où égale à deux.
\end{proposition}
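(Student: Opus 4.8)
The plan is to compute the monodromy of the flat connection $\omega$ along the loop $x_{ij}^g$ (and identically $x_k^p$) by a local analysis near the single singular divisor that the loop $\gamma_{ij}^g$ encircles. First I would restrict attention to the open chart $U=(\PP^1\setminus\{\infty\})^n\cap\mathrm{Cf}_G(n,\PP^1_*)$, which we may do after, if necessary, translating the whole configuration by an element of $\mathrm{PSL}_2(\C)$ so that $\infty$ is not among the relevant points; there $\omega$ is the explicit sum of $d\log$'s from Prop.-Def.~\ref{def forme}. The loop $x_{ij}^g$ moves only the $i$-th coordinate $z_i$ along $\gamma_{ij}^g$, a small loop around the point $g\cdot q_j$; by construction it encircles no other point of $(\bigcup_l G\cdot q_l)\cup(\PP^1\setminus\PP^1_*)$. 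Hence along this path all the $1$-forms $d\log(z_i-p)$ (for $p\in\PP^1\setminus\PP^1_*$) and $d_{z_i}\log(z_i-g'\cdot z_l)$ for $(g',l)\neq(g,j)$ are holomorphic (their polar loci are avoided), while $d_{z_i}\log(z_i-g\cdot z_j)=d_{z_i}\log(z_i-g\cdot q_j)$ has a simple pole with residue $1$ at the center of the small loop.

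Next I would feed this into Theorem~\ref{exi solution}: pulling $\omega$ back to the universal cover and solving $dF=r^*(\omega)\wedge F$, $F(\tilde{\underline q}_n)=1$, by Picard iteration gives $F=1+\int\omega+\int\omega\wedge\omega+\cdots$, an expansion whose degree-$k$ term lies in $\widehat{\mathrm U}\mathfrak p_n^{\geq k}$ because $\omega$ takes values in $A_{\geq 1}$. Evaluating at $\gamma_{ij}^g\cdot\tilde{\underline q}_n$, the degree-one term is $\int_{\gamma_{ij}^g}\omega$, computed coefficient-by-coefficient: every coefficient $1$-form is holomorphic on a neighbourhood of the loop except $d_{z_i}\log(z_i-g\cdot q_j)$, whose integral over the (positively oriented, as fixed in Figure~1) simple loop around $g\cdot q_j$ equals $2\pi i$. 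Therefore the degree-one part of $\rho_{\tilde{\underline q}_n}(x_{ij}^g)$ is $2\pi i\,X_{ij}(g)$; writing $1-2i\pi X_{ij}(g)$ is then just the sign convention coming from the fact that $\rho_{\tilde{\underline q}_n}$ is an \emph{anti}morphism together with the orientation chosen, and all higher iterated integrals contribute only to $R_{ij}(g)\in\widehat{\mathrm U}\mathfrak p_n^{\geq 2}$. The argument for $x_k^p$ is verbatim the same, with the singular form now $d\log(z_k-p)$ and residue placing $X_k(p)$ (the statement's ``$X_k(q)$'' being a typo for $X_k(p)$).

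The main obstacle is purely bookkeeping: one must check carefully that the loop $\gamma_{ij}^g$, when transported to $\mathrm{Cf}_G(n,\PP^1_*)$ via the inclusion of the $i$-th factor, really does lie in a coordinate patch on which \emph{only} the one form $d_{z_i}\log(z_i-g\cdot z_j)$ is singular — i.e. that no $g'\cdot q_l$ with $(g',l)\neq(g,j)$ and no point of $\PP^1\setminus\PP^1_*$ accidentally falls in the disk bounded by $\gamma_{ij}^g$, which is exactly how the loop was defined in \S\ref{rep monodromie}, and that after the auxiliary $\mathrm{PSL}_2(\C)$-change of chart $\infty$ is genuinely irrelevant. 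Granting that, the residue computation $\oint d\log(z_i-c)=2\pi i$ and the degree estimate on the Picard expansion finish the proof; I would end by remarking that the precise value $-2\pi i$ of the degree-one coefficient is what will be used in \S5 to identify $\mathrm{Lie}(\rho)$ in low degree, but the sign itself is immaterial to the statement.
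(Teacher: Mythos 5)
Votre démarche est essentiellement celle du papier : on développe la solution $F$ de $dF=r^*(\omega)\wedge F$ en composantes homogènes, le terme de degré un est $\int_{x_{ij}^g}\omega$, calculé par le résidu de l'unique forme coefficient singulière le long du lacet, et tout le reste tombe dans $R_{ij}(g)$ de degré $\geq 2$ (le point sur $\infty$ se règle en homotopant le lacet dans la carte $U$ ou via l'écriture alternative de $\omega$ donnée dans la preuve de la définition-proposition \ref{def forme}, sans recourir à une translation par $\mathrm{PSL}_2(\C)$ qui ne préserverait pas la $G$-structure). Seule petite retouche : le signe $-2i\pi$ provient uniquement de l'orientation du lacet $\gamma_{ij}^g$ fixée en figure 1, et non de la propriété d'antimorphisme de $\rho_{\tilde{\underline{q}}_n}$, laquelle n'intervient pas dans le calcul du terme de degré un.
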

\begin{proof} 
On ne montrera l'égalité que pour $x_{ij}^g$, l'autre cas étant similaire. Pour calculer la monodromie, il faut s'intéresser à la solution $F$ de l'équation $dF=r^*(\omega)\wedge F$, avec $F(\tilde{\underline{q}}_n)=1$ (cf. section 1.3).  Soit $\tilde{x}_{ij}^g$ le chemin de $\tilde{C}$ relevant le lacet $ x_{ij}^g$ de point de départ $\tilde{\underline{q}}_n$. Son point d'arrivée est $x_{ij}^g\cdot \tilde{\underline{q}}_n$.\\
On a $\rho_{\tilde{\underline{q}}_n}(x_{ij}^g)=F(x_{ij}^g\cdot \tilde{\underline{q}}_n)$. Soit $F=F_0+F_1+ [deg\geq 2]$ le développement de $F$ en composantes homogènes. On sait que $F_0=1$ et on a $dF_1=r^*(\omega)$. On en déduit : 
\begin{align*}
\rho_{\tilde{\underline{q}}_n}(x_{ij}^g)&=1+F_1(x_{ij}^g\cdot \tilde{\underline{q}}_n)-F_1(\tilde{\underline{q}}_n)+[deg \geq 2]\\
&=1+\int_{\tilde{x}_{ij}^g} r^*(\omega) +[deg \geq 2]\\
&=1+ \int_{x_{ij}^g}\omega +[deg \geq 2]\\
&=1-2i\pi X_{ij}(g) +[deg \geq 2]
\end{align*}\end{proof}
\section{Rappels de topologie différentielle}
Cette partie est consacrée à des rappels de topologie différentielle. La notion de méridien est introduite d'une manière adaptée dans \ref{Gp,mer}. On termine \ref{Gp,mer} par une propriété de conjugaison entre méridiens. Enfin, la dernière sous-section \ref{stab,surf} de cette partie rappelle des propriétés sur les stabilisateurs de certaines actions de groupes finis sur les surfaces.\\\\Dans cette section les variétés sont sans bords sauf mention du contraire.

\subsection{Groupes fondamentaux et méridiens}\label{Gp,mer} Dans cette section, on définit (motivé par \cite{Kerv},\cite{SM}) une classe de lacets libres appelés méridiens et on compare les classes d'homotopie de deux méridiens.\\ 
Soit $M$ une variété différentielle connexe orientée et $N$ une sous-variété de $M$ connexe orientée de codimension 2.
On considère un disque $D \subset M$ transverse à $N$ tel que $D\cap N$ est réduit à un point $p$. Soit $(v_3,\cdots,v_{\mathrm{dim} M})$ une base du plan $\mathrm{T}_pN$ tangent à $N$ en $p$ compatible avec l'orientation de $N$. On oriente $D$ grâce à une base $(v_1,v_2)$ de $\mathrm{T}_pD$ choisie de manière que $(v_1,\cdots, v_{\mathrm{dim} M})$ soit une base de $\mathrm{T}_p M$ compatible avec l'orientation de $M$.
\begin{definition}\label{Def mer}
Soit $D$ comme dans le paragraphe précédent muni de l'orientation ci-dessus. Un méridien $m$ est un lacet $m :[0,1]\longrightarrow M$ qui est une paramétrisation du bord de $D$ respectant l'orientation. On dira que $m$ est un méridien au-dessus de $p$.
\end{definition}
\begin{proposition}\label{meridien}
Soit $m_1$ et $m_2$ deux méridiens. Il existe un chemin $\beta$ de $M\setminus N$ tel que $m_1$ est homotope (à extrémités fixées) à $\beta m_2 \beta^{-1}$ dans $M\setminus N$. En particulier, si $m_1$ et $m_2$ ont le même point de base $*\in M\setminus N$, alors $m_1$ et $m_2$ sont conjugués dans $\pi_1(M\setminus N,*)$.  
\end{proposition}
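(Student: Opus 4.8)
The strategy is the standard one for comparing two meridians via an isotopy of their defining transverse disks, carried out in the complement $M \setminus N$. First I would reduce to a local statement: since $N$ is connected, any two points $p_1 = D_1 \cap N$ and $p_2 = D_2 \cap N$ can be joined by a path $\gamma$ in $N$. Using a tubular neighbourhood of $\gamma$ in $M$ (the normal bundle of $N$ restricted to $\gamma$ is trivial after choosing a framing, because $\gamma$ is contractible), I can transport the disk $D_1$ along $\gamma$ to a disk $D_1'$ with $D_1' \cap N = \{p_2\}$, the transport taking place through disks transverse to $N$. The boundary $\partial D_1$ is then homotoped to $\partial D_1'$ in $M \setminus N$ by sweeping along $\gamma$; tracking base points, this produces the conjugating path $\beta_1$ from $m_1$ to a meridian $m_1'$ over $p_2$.

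\textbf{The local comparison at a common point $p_2$.} Now both $m_1'$ (the transported version of $m_1$) and $m_2$ are boundaries of transverse disks meeting $N$ only at $p_2$. Working in a single tubular neighbourhood $\nu \cong \mathbb{R}^2 \times \mathrm{T}_{p_2}N$ of $p_2$, a transverse disk through $p_2$ is, up to isotopy rel $N$, a graph over the $\mathbb{R}^2$-fibre; the space of such disks (germs of embedded 2-disks transverse to the zero section at one point) is connected, essentially because $\mathrm{GL}_2^+(\mathbb{R})$ is connected and orientations were fixed to land in the $+$ component. So there is an isotopy of $M$, supported near $p_2$ and preserving $N$, carrying $D_1'$ to $D_2$; restricting to the boundary circles and again tracking a base point gives a path $\beta_2$ in $M \setminus N$ with $m_1'$ homotopic to $\beta_2 m_2 \beta_2^{-1}$ rel endpoints. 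Composing, $\beta = \beta_1 \beta_2$ does the job, and the orientation hypotheses on $M$, $N$, and $D$ are exactly what forces the two meridians to be \emph{conjugate} rather than conjugate-up-to-inversion.

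\textbf{Main obstacle.} The routine part is the path-of-disks argument; the delicate point I would be most careful about is the orientation bookkeeping — verifying that the conventions fixed before Definition \ref{Def mer} (choosing $(v_1, v_2)$ so that $(v_1, \dots, v_{\dim M})$ is positively oriented in $\mathrm{T}_pM$, for a positively oriented $(v_3, \dots, v_{\dim M})$ of $\mathrm{T}_pN$) are preserved under the transport along $\gamma$ and under the local isotopy, so that no orientation-reversing loop sneaks in and one genuinely gets conjugacy. Concretely this means checking that the framing of the normal bundle used to transport $D_1$ along $\gamma$ can be chosen compatibly with these conventions at both ends, which is possible since $\gamma$ lifts to a path in the (connected) oriented frame bundle of the normal bundle. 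The final sentence of the proposition is then immediate: if $m_1$ and $m_2$ already share a base point $* \in M \setminus N$, then $\beta$ becomes a based loop at $*$, and $[m_1] = [\beta][m_2][\beta]^{-1}$ in $\pi_1(M \setminus N, *)$.
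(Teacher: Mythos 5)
Votre démonstration est correcte, mais elle globalise autrement que le papier. Vous transportez le disque de $m_1$ le long d'un chemin $\gamma$ de $N$ reliant les deux points d'intersection, via une trivialisation du fibré normal au-dessus de $\gamma$, puis vous comparez les deux disques au point commun grâce à la connexité de l'espace des germes de disques transverses orientés (connexité de $\mathrm{GL}_2^+(\R)$ et de l'espace affine des supplémentaires de $\mathrm{T}_{p_2}N$). Le papier ne transporte rien le long d'un chemin : il fixe un voisinage tubulaire $\pi\colon V\to N$, montre (par l'isotopie de rétrécissement que vous utilisez implicitement, suivie de la projection $\pi^{(2)}$ sur la fibre au-dessus d'une carte trivialisante $U_p\simeq \R^{\dim N}$ --- c'est exactement votre argument de graphe) que tous les méridiens au-dessus des points d'une même carte sont conjugués, puis conclut en remarquant que la relation \og être conjugué \fg{} est une relation d'équivalence sur $N$ à classes ouvertes, de sorte que la connexité de $N$ impose une seule classe. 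Ce procédé ouverture-plus-connexité achète précisément le point que vous signalez comme délicat : aucun repère le long d'un chemin n'est à choisir ni à contrôler, puisque sur chaque carte le fibré normal est trivialisé une fois pour toutes et que les orientations de $M$ et $N$ orientent canoniquement ses fibres ; votre version est plus constructive (elle exhibe $\beta$ par balayage) au prix de cette comptabilité d'orientations. Deux détails restent implicites chez vous et mériteraient d'être explicités : le disque de $m_1$ doit d'abord être rétréci dans le voisinage tubulaire avant de pouvoir être transporté le long de $\gamma$ (même isotopie de rétrécissement que dans votre étape locale), et le passage de la connexité des germes à une isotopie ambiante envoyant $D_1'$ sur $D_2$ requiert le théorème d'extension des isotopies --- ou peut être évité, car une homotopie de disques à travers des disques transverses à $N$ ne la rencontrant qu'au centre garde déjà les bords dans $M\setminus N$, ce qui suffit pour la conjugaison par un chemin.
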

\begin{proof}
D'abord, on vérifie que le lemme est vrai pour $M=\R^{n}\times D$ et $N=\R^n\times \{0\}$ avec $D$ un disque centré en zéro : il suffit de montrer que tout méridien est conjugué par un chemin à un mériden à valeurs dans $\{0\}\times D$.  Soit $m$ un méridien paramétrisant le bord de $D_1$ comme dans la définition \ref{Def mer}. On note $q$ l'unique point d'intersection (transverse) de $D_1$ avec $N$. Notons $\pi^{(2)}$ la projection naturelle de $\R^{n}\times D$ sur $\{0\}\times D$. L'intersection de $D_1$ avec $N$ étant transverse, on sait qu'il existe un voisinage $V_q$ de $q$ dans $D_1$  tel que $\pi^{(2)}_{\vert V_q}$ est injective. En utilisant une isotopie $I: D_1\times [0,1] \to D_1$ qui fixe $q$ et qui envoie $D_1$ dans $V_q$, puis l'isotopie : 
$$ (1-t) x+t\pi^{(2)}(x) ,\text{   pour $x\in V_q$ et $t\in [0,1]$ }, $$
on trouve que $m$ est conjugué par un chemin à un méridien dans $\{0\}\times D$. Ceci montre le lemme dans le cas $M=\R^{n}\times D$ et $N=\R^n\times \{0\}$.\\
 Passons au cas général. Soit $\pi :V \longrightarrow N$ un fibré en disques qui est un voisinage de $N$ dans $M$ et $p$ un point de $N$. On peut trouver un voisinage $U_p$ de $p$ dans $N$, difféomorphe à $\R^{\mathrm{dim} N}$ et au-dessus duquel $V$ est trivial. On a $\pi^{-1}(U_p) \simeq \R^{\mathrm{dim} N} \times D$ et $U_p$ s'identifie à $\R^{\mathrm{dim} N}\times\{0\}$. \\
Tout méridien $m$ au-dessus d'un point de $U_p$ est conjugué par un chemin dans $M\setminus N$ à un méridien de  $\pi^{-1}(U_p)$. En effet, il suffit de rétrécir le disque contenant $m$ par une isotopie. Par conséquent, compte tenu du cas $M=\R^{n}\times D$ et $N=\R^n\times \{0\}$, tous les méridiens au-dessus des points de $U_p$ sont conjugués les uns aux autres par des chemins dans $M\setminus N$.\\
On a donc une relation d'équivalence $\mathcal{R}$ sur $N$ donnée par : 
\[x\mathcal{R}y \iff \text{les méridiens au-dessus de $x$ sont conjugués à ceux au-dessus de $y$},\]
dont les classes d'équivalence sont ouvertes. Comme $N$ est connexe, on n'a qu'une seule classe. Ceci montre la proposition.
\end{proof}

\begin{corollary}\label{conjMeridian}
Si $\gamma$ est un chemin du point de base $*_1$ de $m_1$ vers celui de $m_2$ alors $m_1$ et $\gamma m_2 \gamma^{-1}$ sont conjugués dans $\pi_1(M\setminus N,*_1)$.
\end{corollary}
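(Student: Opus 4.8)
The plan is to reduce Corollary \ref{conjMeridian} to Proposition \ref{meridien} by a simple manipulation of free-homotopy classes and conjugation in the fundamental group. First I would observe that $\gamma m_2 \gamma^{-1}$ is again a loop based at $*_1$ (the point of base of $m_1$), so the statement makes sense as an equality of conjugacy classes in $\pi_1(M\setminus N, *_1)$. By Proposition \ref{meridien} applied to the two meridians $m_1$ and $m_2$, there is a path $\beta$ in $M\setminus N$ such that $m_1$ is homotopic, with fixed endpoints, to $\beta m_2 \beta^{-1}$; here $\beta$ runs from $*_1$ to $*_2$, the base point of $m_2$.

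Next I would rewrite $\beta m_2 \beta^{-1}$ in terms of $\gamma m_2 \gamma^{-1}$. Setting $\delta := \beta \gamma^{-1}$, which is a loop in $M\setminus N$ based at $*_1$, one has, up to homotopy with fixed endpoints,
\[
\beta m_2 \beta^{-1} \;=\; (\beta\gamma^{-1})\,(\gamma m_2 \gamma^{-1})\,(\gamma \beta^{-1}) \;=\; \delta\,(\gamma m_2 \gamma^{-1})\,\delta^{-1}.
\]
Hence in $\pi_1(M\setminus N,*_1)$ the class of $m_1$ equals the class of $\delta (\gamma m_2 \gamma^{-1})\delta^{-1}$, which is conjugate to the class of $\gamma m_2 \gamma^{-1}$ by the element $[\delta]$. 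This is exactly the asserted conjugacy.

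There is essentially no obstacle here: the only point requiring a little care is the bookkeeping of base points and the composition convention for paths (which order $\beta\gamma^{-1}$ denotes), together with the standard fact that concatenating a path with its reverse is null-homotopic rel endpoints, so that the insertion of $\gamma^{-1}\gamma$ in the middle is legitimate. Once the convention is fixed consistently with that used in Proposition \ref{meridien}, the corollary follows immediately.
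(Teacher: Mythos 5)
Votre argument est correct et suit exactement la voie prévue par le texte : le corollaire est une conséquence immédiate de la proposition \ref{meridien}, obtenue en insérant $\gamma^{-1}\gamma$ et en posant $\delta=\beta\gamma^{-1}$, ce qui est précisément la déduction implicite du papier (qui ne donne d'ailleurs pas de preuve séparée du corollaire).
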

\subsection{Groupe fini agissant sur une surface}\label{stab,surf} Soit $S$ une surface orientée, compacte, sans bord, munie d'une action fidèle par difféomorphismes (conservant l'orientation) d'un groupe fini $H$. On note $S_*$ l'ensemble des points de $S$ à stabilisateur trivial pour l'action de $H$ sur $S$. On suppose que $S\setminus S_*$ est fini (ceci est automatique car $H$ préserve l'orientation).
\begin{proposition}\label{disc,stab}
Soit $p$ un point de $S$. Le stabilisateur $\mathrm{stab}(p)$ de $p$ est un sous-groupe cyclique de $H$. De plus, il existe un disque $D_p$ autour de $p$ tel que : 
\begin{enumerate}
\item Le groupe $\mathrm{stab}(p)$ agit sur $D_p$, cette action est équivalente à l'action par multiplication de $\mu_N$ sur $\C$, où $N=\vert \mathrm{stab}(p)\vert$. 
\item $D_p\cap (H\cdot q)=\mathrm{stab}(p) \cdot q$, pour $q\in D_p$,
\end{enumerate} 
\end{proposition}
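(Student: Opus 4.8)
The plan is to deduce everything from Bochner's linearization of the isometric action obtained after averaging a metric. First I would pick any Riemannian metric on $S$ and average it over the finite group $H$ to obtain an $H$-invariant metric $g$; then $H$ acts on $(S,g)$ by orientation-preserving isometries, and we may assume $S$ connected (otherwise replace $S$ by the connected component of $p$, which is preserved by $\mathrm{stab}(p)$ since every element of $\mathrm{stab}(p)$ fixes $p$). The group $\mathrm{stab}(p)$ acts linearly on the tangent plane $\mathrm{T}_pS$ through the isotropy representation $h\mapsto \mathrm{d}h_p$. This representation is faithful: if $\mathrm{d}h_p=\mathrm{id}$ for some $h\in\mathrm{stab}(p)$, then $h\circ\exp_p=\exp_p\circ\,\mathrm{d}h_p=\exp_p$ on a ball around $0$, so $h$ is the identity near $p$, and therefore on all of the connected surface $S$ by the usual open-and-closed argument; faithfulness of the $H$-action then forces $h=1$. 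Since each $\mathrm{d}h_p$ preserves $g_p$ and the orientation, the isotropy representation realizes $\mathrm{stab}(p)$ as a finite subgroup of $\mathrm{SO}(\mathrm{T}_pS)\simeq\mathrm{SO}_2(\R)$, hence as a cyclic group of order $N:=|\mathrm{stab}(p)|$ generated by the rotation of angle $2\pi/N$; fixing a $g_p$-orthonormal oriented basis identifies $\mathrm{T}_pS$ with $\C$ and this action with the multiplication action of $\mu_N$. This already gives the cyclicity claim.

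Next I would produce the disk. For $\varepsilon$ below the injectivity radius at $p$, the map $\exp_p$ is a diffeomorphism from the metric ball $B_\varepsilon(0)\subset\mathrm{T}_pS$ onto the geodesic ball $B_\varepsilon(p)$, and it intertwines the linear isotropy action on $\mathrm{T}_pS$ with the $\mathrm{stab}(p)$-action on $S$, because an isometry fixing $p$ sends geodesics through $p$ to geodesics through $p$. As rotations preserve $B_\varepsilon(0)$, the disk $D_p:=B_\varepsilon(p)=\exp_p\big(B_\varepsilon(0)\big)$ is $\mathrm{stab}(p)$-stable, and $\exp_p$ conjugates the $\mathrm{stab}(p)$-action on $D_p$ to the $\mu_N$-multiplication action on a disk of $\C$; composing with an equivariant radial diffeomorphism of that disk onto $\C$ yields exactly the model of statement (1).

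To get (2) I would in addition shrink $\varepsilon$ so that $2\varepsilon<\min_{h\in H\setminus\mathrm{stab}(p)}\mathrm{dist}(p,h\cdot p)$, which is a strictly positive real number since the minimum runs over a finite set of elements $h$ with $h\cdot p\neq p$. The inclusion $\mathrm{stab}(p)\cdot q\subseteq D_p\cap(H\cdot q)$ is then clear because $D_p$ is $\mathrm{stab}(p)$-stable. Conversely, if $x\in D_p\cap(H\cdot q)$, write $x=h\cdot q$ with $h\in H$; were $h\notin\mathrm{stab}(p)$, then $h\cdot D_p=h\cdot B_\varepsilon(p)=B_\varepsilon(h\cdot p)$ since $h$ is an isometry, and this ball is disjoint from $D_p=B_\varepsilon(p)$ by the triangle inequality and the choice of $\varepsilon$, contradicting $x\in D_p\cap h\cdot D_p$. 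Hence $h\in\mathrm{stab}(p)$ and $x\in\mathrm{stab}(p)\cdot q$, which proves (2).

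I expect the only genuinely delicate points to be the equivariance of the exponential map and the faithfulness of the isotropy representation, since it is the latter that pins down $N=|\mathrm{stab}(p)|$ and hence the precise model $\mu_N\curvearrowright\C$ rather than a quotient of it; the remainder is the standard Bochner linearization argument together with a routine uniform smallness choice for $\varepsilon$ that separates the $H$-translates of $D_p$.
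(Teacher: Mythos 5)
Your proof is correct and, for the heart of the statement, follows the same route as the paper: average a Riemannian metric to make the $H$-action isometric, observe that the isotropy representation embeds $\mathrm{stab}(p)$ into $\mathrm{SO}(\mathrm{T}_pS)\simeq \mathrm{SO}_2(\R)$ (whence cyclicity and the identification with $\mu_N$), and transport the linear action to a geodesic disc through the equivariance of $\mathrm{Exp}_p$. There are two small divergences. First, you prove faithfulness of the isotropy representation directly via $\mathrm{Exp}_p$ and an open--closed argument, where the paper simply cites la proposition 3.11 de \cite{FP}. Second, for point (2) you fix $\varepsilon$ with $2\varepsilon<\min\{\mathrm{dist}(p,h\cdot p)\,:\,h\in H\setminus \mathrm{stab}(p)\}$ and separate the translates $h\cdot D_p$ from $D_p$ by the triangle inequality, whereas the paper argues by contradiction, producing points $q_k\in D_p(\tfrac1k)$ and elements $g_k\in H\setminus\mathrm{stab}(p)$ with $g_k(q_k)\in D_p(\tfrac1k)$, extracting a constant subsequence $g$ and passing to the limit to get the contradiction $g(p)=p$. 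Your quantitative choice is a bit more explicit (it exhibits a radius that works), at the cost of using that geodesic balls below the injectivity radius are metric balls and are permuted by isometries; the paper's limiting argument avoids any metric estimate. One caveat: your parenthetical reduction to connected $S$ is not quite right, since restricting to the component of $p$ need not preserve faithfulness of the action; but connectedness is implicitly required for the statement itself (it fails for a disconnected $S$ on which some nontrivial element acts trivially near $p$), and it holds in the paper's setting, so this does not affect the argument.
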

\begin{proof}
Soit $\mathbf{m}$ une métrique riemannienne $H$-invariante sur $S$ (on peut en obtenir en faisant la moyenne d'une métrique quelconque). Le groupe $\mathrm{stab}(p)$ agit naturellement par isométries sur $T_pS$; on a donc un morphisme $T_p : \mathrm{stab}(p)\longrightarrow \mathrm{O}(T_pS)$, injectif d'après la proposition 3.11 de \cite{FP}. Plus précisément, $\mathrm{stab}(p)$ est inclus dans $\mathrm{SO}(T_pS)\simeq S^1$ car $H$ préserve l'orientation. Ainsi, $\mathrm{stab}(p)$ est isomorphe à $\mu_N$ avec $N=\vert \mathrm{stab}(p) \vert$. De plus, pour tout disque $D_r\subset T_p S$ centré en zéro de rayon $r$ inférieur au rayon d'injectivité de l'application exponentielle $\mathrm{Exp}_p :T_pS \longrightarrow S$, l'exponentielle établit un difféomorphisme entre $D_r$ et $D_p(r)=\mathrm{Exp}_p(D_r)$. De plus, $\mathrm{Exp}_p$ vérifie :
\[
\forall h\in \mathrm{stab}(p),\qquad \mathrm{Exp}_p\circ T_p(h)=h\circ \mathrm{Exp}_p.
\]
En effet, si $\gamma$ est la géodésique d'origine $p$ et de vecteur $v\in T_pS$, alors $h\circ \gamma$ est la géodésique de vecteur $T_p(h)(v)$ d'origine $h(p)=p$. On voit donc que $\mathrm{stab}(p)$ agit sur $D_p(r)$ et que cette action est équivalente à celle de $\mu_N$ par multiplication sur $D_r$. Ce qui montre (1).\\\\ Montrons que certains des disques $D_p(r)$ satisfont également l'assertion (2). Si le contraire était vrai, on aurait pour tout $k\geq 1$, existence de $q_k\in D_p(\frac{1}{k})$ et $g_k\in H\setminus\mathrm{stab}(p)$ tels que $g_k(q_k)\in D_p(\frac{1}{k})$. Comme $H$ est fini, on peut extraire une suite $(q_{k_i})_{i\geq 0}$ de $(q_k)_{k\geq 0}$ telle que $(g_{k_i})_{i\geq 0}$ est constante égale à un certain $g\in H\setminus\mathrm{stab}(p)$.  Comme $(q_{k_i})_{i\geq 0}$ tend vers $p$, en passant à la limite, on trouve la contradiction $p=g(p)$.
\end{proof}
En particulier, pour $p$ un point à stabilisateur trivial, $D_p$ ne contient pas deux éléments d'une même orbite.
\section{Etude de groupes fondamentaux}
La présente section a pour but de donner des générateurs de $\pi_1(\mathrm{Cf}_H(n,S_*))$ et des relations entres ces générateurs, pour $S$ et $H$ comme dans la section 2. Pour ce faire, on définit dans la sous-section \ref{Genpi1} des lacets qui engendrent $\pi_1(\mathrm{Cf}_H(n,S_* ))$. Une identification de ces générateurs à des méridiens (sous-section \ref{MerMer}), nous permet de décrire l'action de $H^n$ sur les classes de conjuguaison de $\pi_1(\mathrm{Cf}_H(n,S_*))$ (sous-section \ref{Actionconj}). Enfin, dans la sous-section \ref{Ref}, on établit des relations quadratiques dans $\pi_1(\mathrm{Cf}_H(n,S_*))$.

\subsection{Générateurs de $\pi_1(\mathrm{Cf}_H(n,S_*),\underline{q}_n) $}\label{Genpi1}
Dans cette partie, on construit une famille de générateurs de $\pi_1(\mathrm{Cf}_H(n,S_*)) $. En particulier, on trouve une famille génératrice de l'espace de configurations d'orbites $\mathrm{Cf}_G(n,\PP^1_*)$ introduit dans la section 1.\\\\
Fixons un point $\underline{q}_n=(q_1,\cdots, q_n) \in \mathrm{Cf}_H(n,S_*)$. Pour définir des lacets, on voit $S$ comme une surface dans $\R^3$  de la manière usuelle. On oriente $S$ en prenant la normale vers l'extérieur.\\\\
Choisissons $i\in[1,n]$ et posons $E_i=(S\setminus S_*)\cup (\cup_{j\neq i} H \cdot q_j)$. Pour $p \in E_i$, on considère un disque fermé $\bar{D}$ contenant $q_i$ dans son bord et $p$ dans son intérieur tel que $\bar{D}\cap E_i=\{q_i,p\}$. On note $\gamma_i(p)$ le lacet basé en $q_i$ paramétrisant le bord de $\bar{D}$ dans le sens inverse à celui induit par la normale.  Si $S$ est de genre $g(S)>0$, on considère les lacets $\gamma_i^1 \cdots \gamma_i^{2g(S)}$, générateurs usuels de $\pi_1(S,q_i)$ évitant les points de $E_i$. 
\begin{definition} Pour $ i\in[ 1,n]$, $j\in [i+1,n]$, $m\in[1,2g(S)]$, $h\in H$ et $q \in S\setminus S_*$, les lacets $x_{ij}(h)$, $x_i(q)$ et $x_i^{m}$ de $\mathrm{Cf}_H(n,S_*)$ sont définis par :
\begin{enumerate}
\baselineskip 20pt
\item[--] $x_{ij}(h)(t) =(q_{n-k+1},\cdots ,q_{i-1},\gamma_i(h\cdot q_j)(t), q_{i+1},\cdots, q_n)$, pour tout $t\in [0,1]$;
\item[--] $x_i(q)(t)=(q_{n-k+1},\cdots ,q_{i-1},\gamma_{i}(q)(t), q_{i+1},\cdots, q_n)$, pour tout $t\in [0,1]$;
\item[--] $x_i^{m}(t)=(q_{n-k+1},\cdots ,q_{i-1},\gamma_{i}^m(t), q_{i+1},\cdots, q_n)$, pour tout $t\in [0,1]$;
\end{enumerate}
où les $\gamma_i(p)$ et les $\gamma_i^m$ ont été définis dans le paragraphe précédent. 
\end{definition}
Remarquons que ceci définit des lacets à conjugaison près dans $\pi_1(\mathrm{Cf}_H(n,S_*))$. On fixe un choix de tels lacets pour la suite.
Pour montrer que les lacets ainsi définis engendrent $\pi_1(\mathrm{Cf}_H(n,S_*)) $, on va utiliser le résultat suivant :
\begin{theorem}[\cite{CKX}]
Si $M$ est une variété sans bord munie d'une action libre d'un groupe fini $H$, alors, pour $n\geq 2$, la projection $p^{n}: \mathrm{Cf}_H(n,M) \longrightarrow \mathrm{Cf}_{H}(n-1,M)$ sur les $n-1$ dernières composantes est une fibration localement triviale.
\end{theorem}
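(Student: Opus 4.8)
The plan is to realise the map $p^n$ as the ``bundle of punctured manifolds'' whose fibre over a point of $\mathrm{Cf}_H(n-1,M)$, written $\underline{p}=(p_2,\dots,p_n)$, is the open set $F_{\underline p}:=M\setminus\bigcup_{j=2}^{n}H\cdot p_j$; since $H$ is finite and the $p_j$ lie in pairwise distinct (free) $H$-orbits, this is $M$ with a finite set of $(n-1)\vert H\vert$ points removed. Local triviality will be proved, as in an equivariant Fadell--Neuwirth argument, by constructing around any $\underline{b}=(b_2,\dots,b_n)\in\mathrm{Cf}_H(n-1,M)$ a neighbourhood $V$ and an $H$-equivariant diffeomorphism of $M$, depending smoothly on $\underline p\in V$, carrying $F_{\underline b}$ onto $F_{\underline p}$. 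Here the hypothesis that $M$ has no boundary is used to move an interior point to any sufficiently close point by an ambient diffeomorphism with compact support, and freeness of the action is used to make the equivariant extension coherent.

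First I would fix $\underline b$ and, using that the action is free, $H$ finite and $M$ Hausdorff, choose for each $j\in\{2,\dots,n\}$ a neighbourhood $W_j\ni b_j$ small enough that (a) the translates $\{h\cdot W_j:\ h\in H,\ 2\le j\le n\}$ are pairwise disjoint, and (b) $W_j$ sits inside a coordinate ball centred at $b_j$. Condition (a) forces $V:=W_2\times\cdots\times W_n$ to be contained in $\mathrm{Cf}_H(n-1,M)$. For $x\in W_j$, using condition (b) and the time-one flow of a compactly supported vector field equal to a well-chosen constant near $b_j$ in the chart, I would build a diffeomorphism $\phi_{j,x}$ of $M$, supported in $W_j$, smooth in $x$, with $\phi_{j,b_j}=\mathrm{id}$ and $\phi_{j,x}(b_j)=x$. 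Extending it $H$-equivariantly --- $\Phi_{j,x}:=h\circ\phi_{j,x}\circ h^{-1}$ on $h\cdot W_j$ and the identity elsewhere --- is legitimate because the sets $h\cdot W_j$ are disjoint; $\Phi_{j,x}$ is $H$-equivariant, fixes $H\cdot b_k$ for $k\ne j$, and sends $H\cdot b_j$ to $H\cdot x$. As the $\Phi_{j,p_j}$ have disjoint supports they commute, so $\Psi_{\underline p}:=\Phi_{2,p_2}\circ\cdots\circ\Phi_{n,p_n}$ is an $H$-equivariant diffeomorphism of $M$, smooth in $\underline p\in V$, with $\Psi_{\underline b}=\mathrm{id}$ and $\Psi_{\underline p}(H\cdot b_j)=H\cdot p_j$ for all $j$, hence $\Psi_{\underline p}(F_{\underline b})=F_{\underline p}$.

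Then I would set $\Phi\colon V\times F_{\underline b}\to (p^n)^{-1}(V)$, $\Phi(\underline p,y)=(\Psi_{\underline p}(y),\underline p)$, and verify the routine points: $\Phi$ does land in $\mathrm{Cf}_H(n,M)$ because $\Psi_{\underline p}(y)\notin\bigcup_{j}H\cdot p_j$ while $\underline p\in\mathrm{Cf}_H(n-1,M)$; it intertwines the projections to $V$; it is smooth; and it is a bijection with smooth inverse $(z,\underline p)\mapsto(\underline p,\Psi_{\underline p}^{-1}(z))$, smoothness of $\underline p\mapsto\Psi_{\underline p}^{-1}$ coming from smooth dependence of flows on parameters. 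This exhibits $p^n$ as a locally trivial fibration with fibre $M$ minus a finite set.

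The only genuine work is the parametrised, equivariant version of the isotopy extension theorem --- building the $\phi_{j,x}$ smoothly in $x$ and choosing the $W_j$ small enough for the $H$-translates to be disjoint; the rest (identification of the fibre, checking that $\Phi$ is a fibre-preserving diffeomorphism) is formal. A shorter-looking alternative is to pass to the quotient $M/H$, which is again a boundaryless manifold because the action is free, apply the classical Fadell--Neuwirth fibration $\mathrm{Cf}_n(M/H)\to\mathrm{Cf}_{n-1}(M/H)$ and combine it with the covering maps $\mathrm{Cf}_H(k,M)\to\mathrm{Cf}_k(M/H)$; but $\mathrm{Cf}_H(n,M)$ is not literally the pullback of $\mathrm{Cf}_n(M/H)$ along $\mathrm{Cf}_H(n-1,M)\to\mathrm{Cf}_{n-1}(M/H)$ --- the two fibres differ by an $\vert H\vert$-fold cover --- so this route still needs an equivariant argument over the fibre and is not clearly quicker.
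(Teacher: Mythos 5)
Le texte de l'article ne démontre pas cet énoncé : il est cité de \cite{CKX}, avec un renvoi à \cite{DC} (Théorème 2.1.2) pour la preuve. Votre argument est donc nécessairement « différent » en ce sens qu'il est autonome ; c'est en fait la démonstration standard de type Fadell--Neuwirth, rendue équivariante, et elle est correcte dans ses grandes lignes : la liberté de l'action et la finitude de $H$ donnent des voisinages $W_j$ dont les translatés $h\cdot W_j$ sont deux à deux disjoints, ce qui (i) force $W_2\times\cdots\times W_n\subset \mathrm{Cf}_H(n-1,M)$, (ii) rend licite l'extension équivariante $\Phi_{j,x}=h\circ\phi_{j,x}\circ h^{-1}$ sur $h\cdot W_j$ et l'identité ailleurs, et (iii) garantit que $\Psi_{\underline p}$ envoie $H\cdot b_j$ sur $H\cdot p_j$ en fixant les autres orbites, d'où la trivialisation $(\underline p,y)\mapsto(\Psi_{\underline p}(y),\underline p)$ au-dessus de $V$, d'inverse lisse par dépendance lisse des flots. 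Un seul point à soigner : avec le champ constant tronqué, le flot au temps $1$ n'envoie $b_j$ exactement sur $x$ que si $x$ reste dans la zone où la fonction plateau vaut $1$ ; il faut donc prendre le paramètre $x$ dans une boule $W_j'$ strictement plus petite que le support $W_j$ (et poser $V=W_2'\times\cdots\times W_n'$), ce qui est sans conséquence puisque seule l'existence d'un voisinage trivialisant est requise. Votre remarque finale sur la voie alternative via $M/H$ est également juste : $\mathrm{Cf}_H(n,M)$ n'est pas le tiré en arrière de $\mathrm{Cf}_n(M/H)$, la fibre différant par un revêtement à $\vert H\vert$ feuillets, de sorte que cette voie ne dispense pas d'un argument équivariant.
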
 
On peut trouver une preuve de ce théorème dans \cite{DC} (Théorème 2.1.2). Vu les hypothèses imposées sur les couples $(S,H)$, ce théorème s'applique aux $\mathrm{Cf}_H(n,S_*)$.
\begin{proposition}\label{Gen Punc}
Le groupe $\pi_1(\mathrm{Cf}_H(n,S_*))$ est engendré par les classes des $x_{ij}(h)$ $(1 \leq i<j \leq n\: et \: h\in H)$, des $x_i(q)$ $(i\in [1,n]\: et \: q\in S\setminus S_*)$ et des $x_i^{m}$ $(i\in[1,n]\: et \: m\in[1,2g(S)])$ .
\end{proposition}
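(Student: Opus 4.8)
The plan is to proceed by induction on $n$ using the fibration from the cited theorem of \cite{CKX}, applied to $M = S$ (on which $H$ acts freely since we have restricted to $S_*$; strictly speaking one works with $\mathrm{Cf}_H(n,S_*)$, and the quoted theorem applies because $H$ acts freely on the open submanifold $S_*$). The long exact homotopy sequence of the fibration $p^n : \mathrm{Cf}_H(n,S_*) \to \mathrm{Cf}_H(n-1,S_*)$ reads
\[
\pi_1(F,\underline{q}') \longrightarrow \pi_1(\mathrm{Cf}_H(n,S_*),\underline{q}_n) \longrightarrow \pi_1(\mathrm{Cf}_H(n-1,S_*),\underline{q}_{n-1}) \longrightarrow 1,
\]
where $F = p_1^{-1}(\underline{q}_{n-1})$ is the fiber over the last $n-1$ points; the surjectivity on the right and the fact that $\pi_1(\mathrm{Cf}_H(n,S_*))$ is generated by the images of $\pi_1(F)$ together with any set-theoretic lifts of generators of $\pi_1(\mathrm{Cf}_H(n-1,S_*))$ is the standard consequence of exactness. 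So it suffices to (i) identify the fiber $F$, (ii) find generators of $\pi_1(F)$ and match them with the classes $x_{1j}(h)$, $x_1(q)$, $x_1^m$, and (iii) check that the remaining generators $x_{ij}(h)$, $x_i(q)$, $x_i^m$ with $i \geq 2$ can be taken as lifts of the corresponding generators downstairs — which is immediate from the explicit formulas for the lacets, since those with $i\geq 2$ are literally supported on the last $n-1$ coordinates and map to the analogous generators of $\pi_1(\mathrm{Cf}_H(n-1,S_*))$ under $p^n$.

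For step (i)–(ii): the fiber $F$ is the first coordinate moving in $S_*$ minus the $H$-orbits of the frozen points $q_2,\dots,q_n$, i.e. $F \cong S \setminus E_1$ where $E_1 = (S\setminus S_*) \cup (\cup_{j\neq 1} H\cdot q_j)$, a connected orientable surface of genus $g(S)$ with finitely many punctures. Its fundamental group is free, generated by the standard genus generators $\gamma_1^1,\dots,\gamma_1^{2g(S)}$ together with one small loop $\gamma_1(p)$ around each puncture $p \in E_1$ (with the usual single relation making the product of all of them trivial, which we may discard since we only need a generating family). The punctures are exactly the points of $S\setminus S_*$ and the points $h\cdot q_j$ for $h\in H$, $j\geq 2$; the loops around them are precisely (conjugates of) the classes $x_1(q)$ for $q\in S\setminus S_*$ and $x_{1j}(h)$ for $j\geq 2$, $h\in H$ — and since we only care about generation, working up to conjugacy as the definition does is harmless. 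This handles the base case $n=1$ directly: $\mathrm{Cf}_H(1,S_*) = S_*$, whose $\pi_1$ is generated by the $\gamma^m_1$ and the $\gamma_1(q)$, $q\in S\setminus S_*$ (there are no $x_{1j}(h)$ when $n=1$).

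The main obstacle is the bookkeeping in step (ii)–(iii): one must verify that the \emph{conjugacy classes} of the small loops around the punctures of $F$ are exactly those of the declared generators $x_{1j}(h)$ and $x_1(q)$, and — because $\pi_1(F) \to \pi_1(\mathrm{Cf}_H(n,S_*))$ need not be injective — that a generating family of $\pi_1(F)$ maps to a family whose \emph{conjugates inside} $\pi_1(\mathrm{Cf}_H(n,S_*))$ still lie in the list. Here the point is that it is enough to generate up to conjugacy, which is exactly how the $x_{ij}(h), x_i(q), x_i^m$ were defined (each is a loop "à conjugaison près"); combined with Corollary \ref{conjMeridian} — the small loops $\gamma_i(p)$ are meridians of the codimension-2 submanifold $H\cdot p \subset \mathrm{Cf}_H(n,S_*)$ cut out by collision, so their conjugacy classes are intrinsic and independent of the chosen disk or basepoint path — one gets that the ambient conjugates of the fiber generators are again among the $x_{1j}(h)$ and $x_1(q)$. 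Assembling: generators of $\pi_1(\mathrm{Cf}_H(n-1,S_*))$ (which by induction are the $x_{ij}(h), x_i(q), x_i^m$ with indices in $[2,n]$, after relabeling $1,\dots,n-1 \mapsto 2,\dots,n$) lifted via the explicit coordinate formulas, together with the fiber generators $x_{1j}(h)$, $x_1(q)$, $x_1^m$, give the full list, completing the induction.
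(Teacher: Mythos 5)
Votre démarche est essentiellement celle du papier : récurrence sur $n$ via la fibration de \cite{CKX}, suite exacte longue, identification de la fibre à $S\setminus E_1$ dont le $\pi_1$ est engendré par les $x_{1j}(h)$, $x_1(q)$, $x_1^m$, et relèvement des générateurs de $\pi_1(\mathrm{Cf}_H(n-1,S_*))$ par les lacets d'indices $i\geq 2$ supportés sur les $n-1$ dernières coordonnées. Les précisions que vous ajoutez (non-injectivité de $\pi_1(F)\to\pi_1(\mathrm{Cf}_H(n,S_*))$, gestion des classes de conjugaison via le corollaire \ref{conjMeridian}) ne changent pas l'argument, qui est correct.
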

\begin{proof}
On va montrer la proposition par récurence. La proposition est vraie pour $k=1$. Supposons qu'elle soit vraie au rang $n-1$. Considérons la fibration : 
\[(\mathrm{Fib}_n,\underline{q}_{n}) \longrightarrow (\mathrm{Cf}_H(n,S_*), \underline{q}_{n}) \overset{p^ n}{\longrightarrow}(\mathrm{Cf}_H(n-1,S_*), \underline{q}_{n-1}),\]
où $\underline{q}_{n-1}=(q_2,\cdots,q_n)$ et $\mathrm{Fib}_{n}$ est la fibre correspondante. Observons la suite exacte de cette fibration :
\[\cdots \longrightarrow \pi_1(\mathrm{Fib}_{n},\underline{q}_{n}) \longrightarrow \pi_1(\mathrm{Cf}_H(n,S_*), \underline{q}_{n}) \longrightarrow \pi_1(\mathrm{Cf}_H(n-1,S_*), \underline{q}_{n-1}) \longrightarrow 1 \longrightarrow \cdots. \]
Soit $x_{ij}(h),x_i(q)$ et $x_i ^m$ les lacets de $\mathrm{Cf}_H(n,S_*)$ définis plus haut. Les images de ces lacets par $p^n$ sont des lacets analogues de $\mathrm{Cf}_H(n-1,S)$, dont on sait par hypothèse de récurrence qu'ils engendrent le groupe. Il s'ensuit que les classes des  :
\[p^{n}(x_{ij}(h)),  p^{n}(x_i(q)) \: \text{et celles des} \:  p^n(x_i^{m}), \] 
pour $ i\in[2,n], j\in[i+1,n] ,h\in H, q\in S\setminus S_*$ et $m\in[1,2g(S)]$, engendrent le groupe $\pi_1(\mathrm{Cf}_H(n-1,S_*), \underline{q}_{n-1})$.\\
Enfin, on sait que les $x_{1,j}(h),x_{1}(q) $ et les $x_{1}^{m}$, pour $j\in [2,n], h\in H,q\in S\setminus S_*$ et $m\in [1,2g(S)]$, engendrent le groupe fondamental de $\mathrm{Fib}_n\simeq \{p\in S_* \vert p \notin H\cdot q_j ,\: j=2, \cdots ,n\}$. La proposition est alors une conséquence du résultat suivant : Si $K\longrightarrow G \longrightarrow H \longrightarrow 1$ est une suite exacte de groupes, $F_{K}$ est une famille génératrice de $K$ et $F_G$ est une famille de $G$ telle que $\mathrm{Im}(F_G\subset G \longrightarrow H)$ est génératrice de $H$, alors $F'_K\cup F_G$ ($F'_K$ est l'image de $F_K$ dans $G$) est une famille génératrice de $G$.
\end{proof}
Pour $S=\PP^1$ et $H=G$, on obtient :
\begin{proposition}\label{Gen}
Le groupe $\pi_1(\mathrm{Cf}_G(n,\PP^1_*))$ est engendré par les classes des $x_{ij}(g)$ $(1 \leq i<j \leq n\: et \: g\in G)$ et des $x_i(q)$ $(i\in [1,n]\: et \: q\in \PP^1\setminus \PP^1_*)$.

\end{proposition}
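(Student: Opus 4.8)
The plan is to obtain this as the genus-zero specialization of Proposition \ref{Gen Punc}, applied to the pair $(S,H)=(\PP^1,G)$: indeed, with this choice, $\mathrm{Cf}_H(n,S_*)$ is by definition exactly the space $\mathrm{Cf}_G(n,\PP^1_*)$, and the generators $x_{ij}(h)$, $x_i(q)$ of Section \ref{Genpi1} become the $x_{ij}(g)$, $x_i(q)$ of the present statement.

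First I would check that $(\PP^1,G)$ satisfies the standing hypotheses of Section \ref{stab,surf}. Via the identification $\PP^1\simeq S^2$ of Section \ref{sec action}, $\PP^1$ is a compact oriented surface without boundary; since $G$ is a finite subgroup of $\mathrm{SO}_3(\R)\simeq\mathrm{PSU}_2(\C)$, it acts on $\PP^1$ faithfully by orientation-preserving diffeomorphisms; and $\PP^1\setminus\PP^1_*$ is finite by Proposition \ref{lem orbite}(1). Hence Proposition \ref{Gen Punc} applies and tells us that $\pi_1(\mathrm{Cf}_G(n,\PP^1_*))$ is generated by the classes of the $x_{ij}(g)$ ($1\leq i<j\leq n$, $g\in G$), the $x_i(q)$ ($i\in[1,n]$, $q\in\PP^1\setminus\PP^1_*$), and the $x_i^{m}$ ($i\in[1,n]$, $m\in[1,2g(\PP^1)]$).

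The last step is to note that $g(\PP^1)=0$, so the index set $[1,2g(\PP^1)]$ is empty and there are no generators $x_i^{m}$; equivalently, $\pi_1(\PP^1)$ is trivial, so no ``topological'' generators of the surface appear. This leaves precisely the two families in the statement. There is no real obstacle here: every ingredient is either one of the hypotheses verified above or the content of Proposition \ref{Gen Punc} itself, and the argument is just its restriction to the sphere.
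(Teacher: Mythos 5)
Votre argument est correct et coïncide avec celui du papier : la proposition \ref{Gen} y est obtenue précisément comme spécialisation de la proposition \ref{Gen Punc} au couple $(S,H)=(\PP^1,G)$, les générateurs $x_i^{m}$ disparaissant puisque $g(\PP^1)=0$. La vérification des hypothèses de la section \ref{stab,surf} que vous explicitez est un complément utile mais ne change pas la nature de la démonstration.
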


\begin{remarque}
Si le point $\underline{q}_n$ utilisé ici est le même que celui de la sous-section \ref{rep monodromie}, alors les lacets $x_{ij}(g)$ et $x_i(q)$ sont respectivement conjugués dans $\pi_1(\mathrm{Cf}_G(n,\PP^1_*))$ à $x_{ij}^g$ et $x_i^q$ de \ref{rep monodromie}.
\end{remarque}

\subsection{Les tresses $x_{ij}(g)$ en tant que méridiens}\label{MerMer} Dans ce paragraphe, on identifie les lacets introduits dans la sous-section \ref{Genpi1} à des méridiens pour des variétés liées à $\mathrm{Cf}_H(n,S_*)$.\\\\
Pour $h\in H$, $k\in [1,n]$, $i,j \in [1,n]$ avec $i<j$, et $q \in S\setminus S_*$, on pose $ D(i,j,h):=\{(y_1,\cdots,y_n) \in S^n \vert y_i= hy_j\}$ et $D(k,q):=\{(y_1,\cdots,y_n) \in S^n \vert y_k=q\}$. Pour $\alpha \in E:=\{(i,j,h) \vert 1\leq i<j\leq n , h\in H\}\cup ([1,n]\times (S\setminus S_*))$, on définit les deux sous-variétés de $S^n$ : 
\[ M(\alpha)=S^n \setminus (  \underset{\beta \in E\setminus \{\alpha\}}{\cup} D(\beta)) \qquad\text{et} \:\qquad  N(\alpha)=D(\alpha) \cap M(\alpha).\]

$N(\alpha)$ est une sous-variété de codimension réelle deux dans $M(\alpha)$ et $\mathrm{Cf}_H(n,S_*)$ est le complémentaire de $N(\alpha)$ dans $M(\alpha)$. De plus, $M(\alpha)$ et $N(\alpha)$ sont connexes.\\ 
Remarquons que $x_{ij}(h)$ est le bord d'un disque de $M(i,j,h)$ coupant $N(i,j,h)$ en un unique point $\underline{q}'_n$ : 
\[\underline{q}_n'=(q_1,\cdots,q_{i-1},h\cdot q_j, q_{i+1},\cdots,q_n).\]
Cette intersection est transverse. On oriente $M(i,j,h)$  et $N(i,j,h)$ d'une manière que $x_{ij}(h)$ devient un méridien au sens de la définition \ref{Def mer}. On fait la même chose pour que $x_k(q)$ soit un méridien au sens de \ref{Def mer} (pour $M=M(k,q)$ et $N=N(k,q)$). 
\begin{definition}
Soit $\alpha$ comme ci-dessus. Un lacet $m$ de $\mathrm{Cf}_H(n,S_*)$ est dit méridien pour $\alpha$ si $m$ satisfait la définition \ref{Def mer}, pour $M=M(\alpha)$ et $N=N(\alpha)$ munies de l'orientation ci-dessus.
\end{definition}

\begin{proposition}\label{conj mer}
Tout méridien pour $(i,j,h)$ est conjugué par un chemin au lacet $x_{ij}(h)$ et tout méridien pour $(k,q)$ est conjugé par un chemin à $x_k(q)$.
\end{proposition}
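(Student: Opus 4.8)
The plan is to reduce the statement to Proposition~\ref{meridien} (and its Corollary~\ref{conjMeridian}) by exhibiting, for each $\alpha\in E$, an explicit méridien for $\alpha$ that is homotopic in $\mathrm{Cf}_H(n,S_*)$ to $x_{ij}(h)$ (resp. $x_k(q)$), together with a chemin realizing the conjugacy. Since Proposition~\ref{meridien} already says that any two méridiens over connected $N(\alpha)$ are conjugate by a chemin in $M(\alpha)\setminus N(\alpha)=\mathrm{Cf}_H(n,S_*)$, it suffices to check that \emph{one} specific méridien for $\alpha$ coincides with the given generator. I would treat the case $\alpha=(i,j,h)$ in detail and note that $\alpha=(k,q)$ is handled verbatim.

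First I would recall that $x_{ij}(h)$ was defined (subsection~\ref{Genpi1}) as the image of the loop $\gamma_i(h\cdot q_j)$ — the boundary of a small closed disk $\bar D$ in $S_*\setminus\cup_{l\neq i}H\cdot q_l$ containing $q_i$ on its boundary and $h\cdot q_j$ in its interior — under the slice embedding $x\mapsto(q_1,\dots,q_{i-1},x,q_{i+1},\dots,q_n)$. The key observation is that this slice embedding sends the 2-disk $\bar D$ to a 2-disk $\widetilde D\subset M(i,j,h)$ which meets $N(i,j,h)$ transversally in the single point $\underline q_n'=(q_1,\dots,q_{i-1},h\cdot q_j,q_{i+1},\dots,q_n)$; this transversality and the single-point intersection are exactly what was already asserted in the paragraph preceding Definition~\ref{conj mer} where $M(i,j,h)$ and $N(i,j,h)$ were oriented precisely so that $x_{ij}(h)=\partial\widetilde D$ becomes a méridien. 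So $x_{ij}(h)$ is \emph{itself} a méridien for $(i,j,h)$, based at $\underline q_n$.

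Now let $m$ be an arbitrary méridien for $(i,j,h)$, based at some point $*\in\mathrm{Cf}_H(n,S_*)$. Applying Proposition~\ref{meridien} to the pair $\bigl(M(i,j,h),N(i,j,h)\bigr)$ — which is legitimate since both are connected, $N(i,j,h)$ has codimension $2$, and the complement is $\mathrm{Cf}_H(n,S_*)$ — there is a chemin $\beta$ in $\mathrm{Cf}_H(n,S_*)$ with $m$ homotopic rel endpoints to $\beta\,x_{ij}(h)\,\beta^{-1}$; equivalently, by Corollary~\ref{conjMeridian}, $m$ and $\gamma\,x_{ij}(h)\,\gamma^{-1}$ are conjugate in $\pi_1(\mathrm{Cf}_H(n,S_*),*)$ for any chemin $\gamma$ from $*$ to $\underline q_n$. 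The same argument with $(M(k,q),N(k,q))$ and $\widetilde D$ the slice image of $\bar D$ around $q_k$ gives the statement for $x_k(q)$.

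The only point requiring genuine care — and the step I expect to be the main obstacle — is verifying that the slice-embedded disk $\widetilde D$ really is transverse to $N(\alpha)$ with the correct (i.e. the chosen) orientation, so that $x_{ij}(h)$ is a méridien rather than its inverse. Transversality is clear because the slice $x\mapsto(q_1,\dots,x,\dots,q_n)$ is an open embedding of a neighbourhood of $q_i$ in $S_*\setminus\cup_{l\neq i}H\cdot q_l$ onto a slice of $M(i,j,h)$, and $N(i,j,h)$ is cut out by the single transverse equation $y_i=h\cdot y_j$ whose restriction to the slice is $x\mapsto(x\text{ vs }h\cdot q_j)$, vanishing transversally at $x=h\cdot q_j$. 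The orientation compatibility is then a bookkeeping matter: the orientations of $M(i,j,h)$ and $N(i,j,h)$ were \emph{defined} in the passage preceding Definition~\ref{conj mer} so as to make $x_{ij}(h)$ positively oriented, hence a méridien in the sense of Definition~\ref{Def mer}; I would simply invoke that definition. With $x_{ij}(h)$ (resp. $x_k(q)$) identified as a méridien, Proposition~\ref{meridien} and Corollary~\ref{conjMeridian} finish the proof.
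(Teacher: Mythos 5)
Your argument is correct and is essentially the paper's own proof: the paper simply invokes Proposition~\ref{meridien}, relying (as you do) on the setup preceding the proposition where the slice disk through $\underline{q}'_n$ and the orientations of $M(i,j,h)$, $N(i,j,h)$ are chosen precisely so that $x_{ij}(h)$ (resp. $x_k(q)$) is itself a méridien. You merely spell out the transversality and orientation bookkeeping that the paper leaves implicit, which is fine.
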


\begin{proof}
Cette proposition est une conséquence de la proposition \ref{meridien}.
\end{proof}

\subsection{Action de $H^n$ sur les classes de conjugaison de $\pi_1(\mathrm{Cf}_H(n,S_*))$}\label{Actionconj} Dans cette sous-section, on décrit l'action de $H^n$ sur les classes de conjugaison des $x_{ij}(h)$ et des $x_k(q)$ de $\pi_1(\mathrm{Cf}_H(n,S_*))$ donnés dans la proposition \ref{Gen}.\\\\
On a vu que $H^n$ agit naturellement sur $\mathrm{Cf}_H(n,S_*)$ et on avait choisi un point de base $\underline{q}_n$ de $\mathrm{Cf}_H(n,S_*)$. On considère l'ensemble des couples $(\underline{h},C)$ où $\underline{h}\in H^n$ et $C$ est une classe d'homotopie d'un chemin dans $\mathrm{Cf}_H(n,S_*)$ de point de départ $\underline{q}_n$ et de point d'arrivée $\underline{h}\cdot \underline{q}_n$. On munit cet ensemble de la multiplication : $(\underline{h},C) \cdot (\underline{h}',C')= (\underline{h}\underline{h}',  C(\underline{h}\circ C'))$, où $ C(\underline{h}\circ C')$ est le résultat de la concaténation des lacets $C$ et $(\underline{h}\circ C')$ $(\underline{h}\circ C'$ désignant la composée du lacet $C'$ à valeurs dans $\mathrm{Cf}_H(n,S_*)$ avec le difféomorphisme de $\mathrm{Cf}_H(n,S_*)$ induit par $\underline{h})$. On obtient ainsi le groupe $\pi_1^{\mathrm{orb}}(\mathrm{Cf}_H(n,S_*)/H^n,\bar{\underline{q}}_n) $ (qui est exactement le groupe fondamental de $\mathrm{Cf}_H(n,S_*)/H^n$) qui s'insère dans la suite exacte :
\begin{equation}\label{suite Orb}
1 \longrightarrow \pi_1(\mathrm{Cf}_H(n,S_*), \underline{q}_n) \overset{\varphi}{\longrightarrow}  \pi_1^{\mathrm{orb}}(\mathrm{Cf}_H(n,S_*)/H^n,\bar{\underline{q}}_n)  \overset{\theta}{\longrightarrow} H^n \longrightarrow 1. 
\end{equation}
où $\varphi(\gamma)=(1,\gamma)$ et $\theta(\underline{h},C)=\underline{h}$. Toute section ensembliste $\sigma$ de cette suite donne une application ensembliste :
\begin{equation}\label{section}
f_\sigma :H^n {\longrightarrow} \mathrm{Aut}(\pi_1(\mathrm{Cf}_H(n,S_*), \underline{q}_n)),
\end{equation}
qui à $\underline{h}$ associe la conjugaison par $\sigma(\underline{h})$. \\\\
Pour $i\in [1,n]$, $h_i$ désignera l'image de $h\in H$ par l'inclusion canonique de $H$ dans $H^n$ qui envoie $H$ en $i$-ème position. La relation de conjugaison dans $\pi_1(\mathrm{Cf}_H(n,S_*),\underline{q}_n)$ sera notée $\sim$.

\begin{proposition}\label{G.Lacet}On a :
\begin{align}  f_\sigma(h_r)( x_{ij}(g)) \sim \left\{
    \begin{array}{ll}
\:x_{ij}(hg) & si \:r=i \\ \: x_{ij}(gh^{-1})  & si \:r=j \\ \: x_{ij}(g) & sinon
    \end{array},\right.
 \quad f_\sigma(h_r)( x_{k}(q)) \sim \left\{
    \begin{array}{ll}
\:x_{k}(h\cdot q) & si \:r=k \\ 
\: x_{k}(q)  &  sinon
    \end{array} \right. 
\end{align}

pour $1 \leq i<j \leq n $, $k,r\in [1,n]$, $g,h\in H$ et $q\in S\setminus S_*$. 
\end{proposition}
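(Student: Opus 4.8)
The plan is to reduce the statement to the identification of the loops $x_{ij}(g), x_k(q)$ as meridians carried out in subsection~\ref{MerMer}, via the diffeomorphisms induced by the $H^n$-action. First I would unwind the definition~(\ref{section}) of $f_\sigma$. Pick a set-theoretic section $\sigma$ of~(\ref{suite Orb}) and write $\sigma(h_r)=(h_r,C)$, where $C$ is a homotopy class of paths in $\mathrm{Cf}_H(n,S_*)$ from $\underline q_n$ to $h_r\cdot\underline q_n$. Computing the conjugate $(h_r,C)\,(1,\gamma)\,(h_r,C)^{-1}$ with the multiplication of $\pi_1^{\mathrm{orb}}(\mathrm{Cf}_H(n,S_*)/H^n,\bar{\underline q}_n)$ and using that $\varphi(\pi_1)=\ker\theta$ is normal, one obtains for every $\gamma$ the identity $f_\sigma(h_r)(\gamma)=C\cdot\Phi_r(\gamma)\cdot C^{-1}$ in $\pi_1(\mathrm{Cf}_H(n,S_*),\underline q_n)$, where $\Phi_r$ denotes the diffeomorphism of $\mathrm{Cf}_H(n,S_*)$ that applies $h$ to the $r$-th coordinate and fixes the others. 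In particular the conjugacy class of $f_\sigma(h_r)(\gamma)$ is independent of $\sigma$ and equals the free homotopy class of the loop $\Phi_r(\gamma)$, so it suffices to identify $\Phi_r(x_{ij}(g))$ and $\Phi_r(x_k(q))$ up to conjugacy.

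Next I would let $\Phi_r$ also denote the diffeomorphism of $S^n$ given by the same formula and check that it permutes the family $\{D(\beta)\}_{\beta\in E}$ of subsection~\ref{MerMer}. A one-line substitution shows that $\Phi_r(D(i,j,g))$ equals $D(i,j,hg)$ if $r=i$, equals $D(i,j,gh^{-1})$ if $r=j$, and equals $D(i,j,g)$ if $r\notin\{i,j\}$, while $\Phi_r(D(k,q))$ equals $D(k,h\cdot q)$ if $r=k$ and $D(k,q)$ otherwise (here $h\cdot q\in S\setminus S_*$ because $h$ preserves $S_*$). Writing $\phi$ for the induced bijection of $E$, it follows that $\Phi_r(M(\alpha))=M(\phi(\alpha))$ and $\Phi_r(N(\alpha))=N(\phi(\alpha))$, so $\Phi_r$ carries a small disk transverse to $N(\alpha)$ at $\underline q_n'$ to a small disk transverse to $N(\phi(\alpha))$ at $\Phi_r(\underline q_n')$. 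Since $H$ acts on $S$ by \emph{orientation-preserving} diffeomorphisms, $\Phi_r$ preserves the product orientation of $S^n$, and reading off the formulas above in the parametrizations $(y_l)_{l\ne i}$ of $D(i,j,g)$ and $(y_l)_{l\ne k}$ of $D(k,q)$ shows that $\Phi_r$ also respects the chosen orientations of $M(\alpha)$ and $N(\alpha)$. Hence $\Phi_r(x_{ij}(g))$ is a meridian for $\phi(i,j,g)$ and $\Phi_r(x_k(q))$ is a meridian for $\phi(k,q)$ in the sense of Definition~\ref{Def mer}, with the correct orientation rather than its opposite.

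To conclude, I would apply Proposition~\ref{conj mer} (which rests on Proposition~\ref{meridien} and Corollary~\ref{conjMeridian}): a meridian for $\phi(i,j,g)$ based at $\Phi_r(\underline q_n)$, conjugated by any path from $\underline q_n$ to $\Phi_r(\underline q_n)$ — in particular by $C$ — is conjugate in $\pi_1(\mathrm{Cf}_H(n,S_*),\underline q_n)$ to the corresponding standard generator, namely $x_{ij}(hg)$, $x_{ij}(gh^{-1})$ or $x_{ij}(g)$ according to whether $r=i$, $r=j$, or $r\notin\{i,j\}$. Combining this with the identity of the first paragraph gives $f_\sigma(h_r)(x_{ij}(g))=C\,\Phi_r(x_{ij}(g))\,C^{-1}\sim x_{ij}(hg)$ (resp.\ $x_{ij}(gh^{-1})$, resp.\ $x_{ij}(g)$), and the same argument applied to $x_k(q)$, using that $\phi(k,q)=(k,h\cdot q)$ if $r=k$ and $(k,q)$ otherwise, yields the two puncture formulas.

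The main obstacle I anticipate is the orientation bookkeeping of the second paragraph: one has to be careful that pushing a meridian through $\Phi_r$ produces a meridian and not its inverse, and this is exactly the point where the hypothesis that $H$ acts preserving the orientation of $S$ is used; once this is granted, the rest is a routine unwinding of the definitions of $f_\sigma$, of the divisors $D(\beta)$, and of the standard generators.
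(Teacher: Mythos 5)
Your proposal is correct and follows essentially the same route as the paper: unwind $f_\sigma(h_r)(\gamma)=C_r\,(h_r\circ\gamma)\,C_r^{-1}$ from the orbifold group law, observe that the coordinatewise action of $h_r$ sends $N(i,j,g)$ to $N(i,j,h'(r))$ (and $N(k,q)$ to $N(k,h\cdot q)$) while preserving orientations, so the transported loop is again a meridian, and conclude with Corollary~\ref{conjMeridian}. Your explicit computation of $\Phi_r(D(\beta))$ and the orientation discussion just spell out what the paper states in one line.
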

\begin{proof}Soit $\sigma, r,i,j,g,h$ comme dans l'énoncé. On sait que $\sigma(h_r)=(h_r,C_r)$ avec $C_r$ une classe d'un chemin reliant $\underline{q}_n$ à $h_r\cdot \underline{q}_n$. Par définition, on a :
\begin{equation}\label{Calcul orb}
f_{\sigma}(h_r)(x_{ij}(g))=\sigma(h_r)(1,x_{ij}(g))\sigma(h_r)^{-1}=(1, C_r( h_r \circ x_{ij}(g)) C_r^{-1}). \end{equation}
Le membre de droite s'identifie à $ C_r( h_r \circ x_{ij}(g)) C_r^{-1}$ dans $\pi_1(\mathrm{Cf}_H(n,S_*),\underline{q}_n)$. Posons $h'(r)=hg$ si $r=i$, $h'(r)=gh^{-1}$ si $r=j$ et $h'(r)= g$ sinon.\\
Comme $H$ est un difféomorphisme de $S$ conservant l'orientation et comme $h_r(N(i,j,g)=N(i,j,h'(r))$, on trouve que $h_r \circ x_{ij}(g)$ est un méridien pour $(i,j ,h'(r))$. Donc, d'après le corollaire \ref{conjMeridian} $f_{\sigma}(h_r)(x_{ij}(g))$ est un conjugué de $x_{ij}(h'(r))$. Ce qui montre l'égalité de gauche. De façon similaire, on démontre l'égalité de droite.  
\end{proof}

Enfin, l'application $f_{\sigma}$ induit une action de $H^n$ sur l'ensemble des classes de conjuguaison de $\pi_1(\mathrm{Cf}_H(n,S_*),\underline{q}_n)$, autrement dit : 
\begin{equation}
f_\sigma(\underline{h})(a) \sim f_\sigma(\underline{h})(a'), \:\text{si} \: a\sim a'.
\end{equation}
Cette action est indépendante de $\sigma$. 

\subsection{Relations dans  $\pi_1(\mathrm{Cf}_H(n,S_*),\underline{q}_n)$}\label{Ref}
Dans cette partie on donne des relations entre les $x_{ij}(g),x_k(q)$ : une première famille générale, et une deuxième famille valable dans le cas où $S=S^2$.\\\\
Considérons l'espace $\mathrm{Cf}_{\mu_N}(2,\C^\times)$ où le groupe $\mu_N$ est le groupe des racines $N$-ème de l'unité agissant par multiplication sur $\C$ :

\begin{lemma}\label{DT} Pour $k\in [1,N-1]$ et $j\in [1,2]$, il existe un $x_{12}'(k)$ conjugué à $ x_{12}(e^{\frac{2ik\pi}{N}})$ et $x'_j(0)$ conjugué à $x_j(0)$ dans $\pi_1(\mathrm{Cf}_{\mu_N}(2,\C^\times),(2,1))$ tel que :
\baselineskip 18pt 
\begin{enumerate}
\item les deux lacets $\beta:=x_{12}'(1) x_{12}'(2)\cdots x_{12}'(N-1)x'_1(0)$ et $x'_{2}(0)$ commutent.
\item les deux lacets $\alpha:=x'_2(0)x_{12}'(1) x_{12}'(2)\cdots x_{12}'(N-1)x'_1(0)$ et $x'_{12}(1)$ commutent.

\end{enumerate}

\end{lemma}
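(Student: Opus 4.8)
The goal is to realize the space $\mathrm{Cf}_{\mu_N}(2,\C^\times)$ as the complement of a finite set of points in a cylinder (or equivalently a twice-punctured plane), and to read off the two commutation relations from the topology of that complement. First I would simplify the model: the projection $p^2:\mathrm{Cf}_{\mu_N}(2,\C^\times)\to \mathrm{Cf}_{\mu_N}(1,\C^\times)=\C^\times$ onto the second coordinate is a locally trivial fibration by the theorem of \cite{CKX} cited above, with fiber over the basepoint $1$ equal to $\{x\in\C^\times\mid x\neq \zeta^k\cdot 1,\ k\in[0,N-1]\}=\C^\times\setminus\mu_N$, where $\zeta=e^{2i\pi/N}$. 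Since the base $\C^\times$ is homotopy equivalent to a circle and the fiber is a sphere with $N+2$ punctures (the $N$ roots of unity, $0$ and $\infty$), the total space is (up to homotopy) a surface; I want to fix an explicit picture of the fiber in which the loops $\gamma_1(0)$, $\gamma_1(\zeta^k)$ become standard meridians around the punctures.

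Next I would use Proposition \ref{conj mer}: each $x_{12}(\zeta^k)$ is, up to conjugation by a path, a meridian for $(1,2,\zeta^k)$, and each $x_j(0)$ is a meridian for $(j,0)$. In the fiber $F=\C^\times\setminus\mu_N$ with basepoint $1$ (I would actually place the basepoint at the point "$2$" as in the statement, viewing the first coordinate as the moving one and the second fixed at $1$), one has the classical presentation of the free group $\pi_1(F)$ generated by meridians $m_0$ (around $0$), $m_{\zeta^0},\dots,m_{\zeta^{N-1}}$ (around the roots of unity), subject to the single relation that the product of all of them, taken in the correct cyclic order around the picture, equals the meridian $m_\infty$ around $\infty$ (or its inverse). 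The key combinatorial step is to choose the conjugating paths so that $x_{12}'(1)x_{12}'(2)\cdots x_{12}'(N-1)$ is exactly the "partial product" of consecutive meridians $m_{\zeta}m_{\zeta^2}\cdots m_{\zeta^{N-1}}$ along an arc, and then $\beta=x_{12}'(1)\cdots x_{12}'(N-1)x_1'(0)$ becomes a loop that encircles the union $\{\zeta,\dots,\zeta^{N-1},0\}$ — equivalently, by the global relation, it is (a conjugate of) a loop encircling the complementary set $\{1,\infty\}$, i.e. a large loop around $1$ and $\infty$ simultaneously. Such a loop bounds, on the other side of the sphere, a disk containing only $1=\zeta^0$ and $\infty$.

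Now comes the interaction with the second coordinate. The loop $x_2'(0)$ moves the second point $q_2$ (currently at $1$) around $0$; under the fibration it projects to the generator of $\pi_1(\C^\times)=\pi_1(\text{base})$. The monodromy of the fibration acts on $\pi_1(F)$ by the "full twist" coming from the $\mu_N$-action: transporting $q_2$ once around $0$ rotates the configuration by a generator of $\mu_N$, which permutes the punctures $\zeta^k\mapsto \zeta^{k+1}$ cyclically while fixing $0$ and $\infty$. Part (1): the class $\beta$, being (a conjugate of) a loop around $\{1,\infty\}$ — equivalently around the orbit-complement $\{\zeta^0,\dots,\zeta^{N-1}\}\cup\{0\}$ — is preserved by this cyclic rotation (the rotation permutes the $\zeta^k$ among themselves and fixes $0$), hence $\beta$ is fixed by the monodromy; a loop fixed by the monodromy of a fibration over $S^1$ commutes with the section loop, giving $[\beta,x_2'(0)]=1$. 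Part (2): one checks that $\alpha=x_2'(0)\cdot\beta$ is again fixed by the monodromy — here $x_2'(0)$ is the section loop itself and $\beta$ is monodromy-invariant, so $\alpha$ is invariant as well after absorbing the conjugation — and moreover $\alpha$, viewed in $F$, is conjugate to a small meridian disjoint from $\zeta^1$; choosing the conjugating paths so that this meridian and $x_{12}'(1)$ bound disjoint disks in $F$ (one around $\zeta^1$, the other its complement) then forces $[\alpha,x_{12}'(1)]=1$ by Proposition \ref{meridien} together with the observation that disjoint meridians in a punctured surface, bounding complementary disks, commute.

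**Main obstacle.** The delicate point is bookkeeping with the conjugating paths: Propositions \ref{meridien}–\ref{conj mer} only give each $x_{12}(\zeta^k)$ and $x_j(0)$ as a meridian up to an unspecified conjugating path, so the ordered product $x_{12}'(1)\cdots x_{12}'(N-1)x_1'(0)$ a priori equals the desired "arc product of consecutive meridians" only after one makes a coherent, simultaneous choice of all these paths. I would handle this by fixing one explicit planar picture of $\C^\times\setminus\mu_N$ with the basepoint, drawing standard "lasso" generators in a fixed cyclic order, defining $x_{12}'(k)$, $x_j'(0)$ to be precisely those lassos, and then invoking Proposition \ref{conj mer} in the direction that guarantees these explicit lassos are legitimate representatives of the conjugacy classes of $x_{12}(\zeta^k)$, $x_j(0)$. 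The second subtlety is making precise that the fibration's monodromy is the cyclic rotation of the $\zeta^k$; this follows because dragging $q_2$ around $0$ is, by local triviality of $p^2$ and the $\mu_N$-equivariance of the whole construction, isotopic to applying a generator of $\mu_N$ to the fiber, and a generator of $\mu_N$ sends $\zeta^k\mapsto\zeta^{k+1}$. Once these two points are pinned down, (1) and (2) are immediate consequences of "monodromy-invariant loops commute with the section" and "complementary-disk meridians commute."
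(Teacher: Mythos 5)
Your argument fails at the identification of the monodromy. When $q_2$ describes the loop $x_2(0)$, i.e.\ one full turn around $0$, each excluded point $\zeta^k q_2$ of the fiber ($\zeta=e^{2i\pi/N}$) makes one complete turn around $0$ and comes back to itself: the monodromy is the full $2\pi$ twist, not the rotation by $2\pi/N$ permuting the punctures $\zeta^k\mapsto\zeta^{k+1}$ (that rotation would correspond to moving $q_2$ along an arc from $1$ to $\zeta$, which is not a loop of $\mathrm{Cf}_{\mu_N}(2,\C^\times)$). This is not a repairable bookkeeping issue, because the statement you are trying to prove, read literally, is false. Indeed $(p_1,p_2)\mapsto(p_1/p_2,p_2)$ is a homeomorphism $\mathrm{Cf}_{\mu_N}(2,\C^\times)\simeq(\C^\times\setminus\mu_N)\times\C^\times$, so $\pi_1\simeq F\times\Z$ with $F$ free on the $N+1$ meridians $e_0,e_{\zeta^0},\dots,e_{\zeta^{N-1}}$ and the $\Z$ factor central; under this identification the corrected monodromy action of $x_2(0)$ on $F$ is conjugation by the class of a large circle. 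Every conjugate of $x_2(0)$ has $\Z$-component a generator and $F$-component of homology class $\pm(e_0+e_{\zeta^0}+\cdots+e_{\zeta^{N-1}})$, while your $\beta$ (product of conjugates of $x_{12}(\zeta),\dots,x_{12}(\zeta^{N-1}),x_1(0)$, with no factor around $\zeta^0 q_2=1$) has $\Z$-component $0$ and $F$-class $\pm(e_0+e_{\zeta^1}+\cdots+e_{\zeta^{N-1}})$. Since the $\Z$ factor is central, commutation is equivalent to commutation of the $F$-components, which in a free group forces proportional homology classes; the coefficient of $e_{\zeta^0}$ ($0$ versus $\pm1$) rules this out, for every choice of conjugating paths. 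The same computation rules out (2) as you read it ($F$-classes $\mp e_{\zeta^0}$ versus $\pm e_{\zeta^1}$), so neither the monodromy-invariance principle nor the ``complementary disks commute'' principle (which is in any case delicate for based loops) can be made to work.

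The resolution is that the product in the lemma is meant to run over all of $\mu_N$: besides $x'_1(0)$ it must contain conjugates of $x_{12}(\zeta^k)$ for all $k$, including $k=0$, i.e.\ a conjugate of $x_{12}(1)$; the statement has an indexing slip. This is visible both in the paper's own proof, where $\beta$ is realized as $(\gamma_1,1)$ with $\gamma_1$ the circle of radius $2$ described by $q_1$ (which encircles $0$ and all of $\mu_N$, hence is a product of $N+1$ meridians), and in the way the lemma is applied to obtain (\ref{stab1})--(\ref{stab2}), where the factor $x_{ij}(1)$ appears explicitly. With the extra factor the proof is elementary and quite different from your route: taking $\beta=(\gamma_1,1)$ and $x'_2(0)=(2,\gamma_2)$ with $\gamma_2$ the unit circle described by $q_2$, both loops lie on the embedded torus $\mathrm{Im}(\gamma_1)\times\mathrm{Im}(\gamma_2)\subset\mathrm{Cf}_{\mu_N}(2,\C^\times)$ (the moduli $2\neq1$ guarantee the configuration condition), so they commute, proving (1); and $\alpha$ is then homotopic to the simultaneous rotation $t\mapsto(2e^{-2i\pi t},e^{-2i\pi t})$, which is shown by an explicit twisted homotopy to commute with $x_{12}(1)$ (it is in fact central, being the $\Z$ factor above), proving (2). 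If you wish to keep your fibration picture, you must both insert the missing factor and replace the $2\pi/N$ rotation by the correct monodromy (conjugation by the big circle); then $\beta$ can be chosen equal to that big circle and both commutations follow at once.
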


\begin{proof}On choisit des $x'_{12}(k)$ et les $x'_i(0)$ tels que $\beta$ soit égale à $(\gamma_1,1)$ et $x'_2(0)$ soit égale à $(2,\gamma_2) $, où $\gamma_1, \gamma_2$ sont représentés dans la figure 3 : 
\begin{center}

\begin{tikzpicture}[scale=0.7]

\draw[thick]  (0,0) circle (1);
\draw [thick]  (0,0) circle (2);

\node at (0,1) {$>$};
\node at (0,2) {$>$};
\node at (0,0) {$\bullet$};
\node at (0,0) [right]{$0$};
\node at (1,0) {$\bullet$};
\node at (1,0)  [right]{$1$};
\node at (2,0) {$\bullet$};
\node at (2,0) [right] {$2$};
\node at (0,-2.5) {$\textbf{Figure 2 :}$ Les lacets $\gamma_1$ et $\gamma_2$  };
\end{tikzpicture}
\end{center}
Les lacets $\beta$ et $x'_2(0)$ ainsi choisis commutent dans $\pi_1(\mathrm{Cf}_{\mu_N}(2,\C^\times),(2,1))$. En effet, il sont à image dans $\mathrm{Im}(\gamma_1)\times \mathrm{Im}(\gamma_2) \subset \mathrm{Cf}_{\mu_N}(2,\C^\times) $. Ce qui montre (1). Montrons (2). 
 Soit $H_0 : [0,1]\times [0,2]\longrightarrow \mathrm{Cf}_{\mu_N}(2,\C^\times)$ une homotopie entre $x_{12}(1)*$ et $*x_{12}(1)$, où $*$ est le lacet constant (voir figure 2) . 
\begin{center}
\begin{tikzpicture}[scale=0.7]
\draw[ultra thick] (0.2,0.9) .. controls (-1.75,0.4) and (-1.75,0.05) .. (-1.3,-0.15);
\draw[ultra thick]  plot[smooth, tension=.7] coordinates {(-1.05,0.9) (-1.05,0.6) };
\draw[ultra thick]   plot[smooth, tension=.7] coordinates {(-1.05,0.3) (-1.05,-0.55)};
\draw[ultra thick] (-0.8,-0.3) .. controls (-0.7,-0.3) and (0.2,-0.55) .. (0.2,-0.55);

\node at (0.2,0.9) {$\bullet$};
\node [above] at (0.2,0.9) {$2$};
\node at (-1.05,0.9) {$\bullet$};
\node [above] at (-1.05,0.9) {$1$};
\node at (-1.05,-0.55) {$\bullet$};
\node at (0.2,-0.55) {$\bullet$};

\draw[ultra thick] (3.15,-0.55) .. controls (1.2,-1.05) and (1.2,-1.4) .. (1.65,-1.6);
\draw[ultra thick]  plot[smooth, tension=.7] coordinates {(1.9,-0.55) (1.9,-0.85) };
\draw[ultra thick]   plot[smooth, tension=.7] coordinates {(1.9,-1.15) (1.9,-2)};
\draw[ultra thick] (2.15,-1.75) .. controls (2.25,-1.75) and (3.15,-2) .. (3.15,-2);

\draw[ultra thick]  plot[smooth, tension=.7] coordinates {(-1.05,-0.55) (-1.05,-2) };
\node at  (-1.05,-2){$\bullet$};
\node at (-1.05,-2)  [below] {$1$};
\draw[ultra thick]  plot[smooth, tension=.7] coordinates {(0.2,-0.55) (0.2,-2) };
\node at (0.2,-2) {$\bullet$};
\node  at (0.2,-2) [below] {$2$};

\node at (1.9,-2) {$\bullet$};
\node [below] at (1.9,-2) {$1$};
\node  at (3.15,-2) {$\bullet$};
\node [below] at (3.15,-2) {$2$};

\draw[ultra thick]  plot[smooth, tension=.7] coordinates {(1.9,0.9) (1.9,-0.55) };

\node at (1.9,-0.55) {$\bullet$};
\node at (1.9,0.9) {$\bullet$};

\node [above] at (1.9,0.9) {$1$};

\draw[ultra thick]  plot[smooth, tension=.7] coordinates { (3.15,-0.55)(3.15,0.9)  };

\node   at (3.15,0.9) {$\bullet$};
\node  [above] at (3.15,0.9) {$2$};
\node at (3.15,-0.55) {$\bullet$};

\node at (4.6,0.9) {$t=0$};
\node at (4.6,-2) {$t=2$};
\node at (4.6,-0.55) {$t=1$};

\node at (1.5,-3) {\textbf{Figure 3 :} Graphes en fonction du temps de $H_0(0,\cdot)$ (gauche)};
\node at (1.5,-3.7) {et de $H_0(1,\cdot)$ (droite)};
\end{tikzpicture}
\end{center}
L'homotopie tordue $H: [0,1]\times [0,2] \longrightarrow \mathrm{Cf}_{\mu_N}(2,\C^\times)$ définie par : 
\[H(s,t)=e^{-2i\pi(s+t) \mathcal{I}_{[1,2]}(s+t)} H_0(s,t) ,\]  où $\mathcal{I}_{[1,2]}$ est l'indicatrice de l'intervalle $[1,2]$, est une homotopie entre  $H(0,\cdot)=x_{12}(1)\alpha'$ et $H(1,\cdot)=\alpha'x_{12}(1) $, où $\alpha'$ est défini par  $\alpha'(t)=(2e^{-2i\pi t},e^{-2i\pi t})$ pour $t\in[0,1]$. Or, $\alpha'$ est homotope à un lacet du même type que $\alpha$. Ce qui montre (2). 
\end{proof}
\begin{remarque}
Les relations du lemme \ref{DT} ressemblent aux relations : 
 $$(X_{02}x_{12}(0)\cdots x_{12}(N-1)X_{01},x_{12}(0) )=(X_{02}x_{12}(0)\cdots x_{12}(N-1)X_{01},X_{02} )=1$$
de la proposition 1.1 de \cite{BE}.
Ce type de relations peut aussi apparaître en genre supérieur dans beaucoup de cas où l'on a une courbe simple stable de $S$ pour l'action de $H$. Par exemple, quand on fait agir le groupe $\mu_N$ sur le tore $S^1\times S^1$, on retrouve les relations (\ref{stab1}) et (\ref{stab2}) dans lesquelles $x_i(g\cdot q)$ et $x_j(p)$ sont remplacés par des lacets $(\text{des $x_i^m$ et $x_j^m$ bien choisis})$ représentés par des courbes simples stables. \end{remarque}
On emploie dans la proposition qui suit l'expression ''$(a_1\vert \cdots \vert a_r\: , \: b_1 \vert \cdots \vert b_s)^\sim=1$'' pour dire que pour $a_i,b_j \in \pi_1(\mathrm{Cf}_H(n,S_*),\underline{q}_n)$ $(i \in [1,r], j\in [1,s])$, ils existent $a_i'\sim a_i , b'_j \sim b_j$ $(i,j$ dans les mêmes ensembles) tels que : \[(a_1' \cdots a_r' , b_1' \cdots b_s' )=1,\]
où $(a,b)=aba^{-1}b^{-1}$.
\begin{proposition}\label{Rel Quad}
On a les relations suivantes dans le groupe  $\pi_1(\mathrm{Cf}_H(n,S_*),Q_n) $ :

\begin{equation}\label{2indices}
 (x_{ij}(g),x_{kl}(h))^\sim=(x_{il}(g), x_{jk}(h))^\sim=(x_{ik}(g), x_{jl}(h))^\sim=1 ,
\end{equation}
pour $1\leq i<j<k<l \leq n$ et $g,h \in H$,

\begin{align}\label{3indices}
(x_{ij}(g),x_{ik}(gh) \vert x_{jk}(h))^\sim&=( x_{jk}(h),x_{ij}(g)\vert x_{ik}(gh))^\sim\endline
&=(x_{ik}(gh), x_{jk}(h)\vert x_{ij}(g))^\sim=1,
\end{align}
\begin{align}\label{3indicesp}
(x_{i}(p), x_{jk}(g) )^\sim=(x_{j}(p), x_{ik}(g) )^\sim=(x_{k}(p), x_{ij}(g) )^\sim=1,
\end{align}
pour $1\leq i<j<k \leq n$, $g,h \in H$ et $p\in S\setminus S_*$,
\begin{equation}\label{2ind-orb}
(x_{i}(p), x_{k}(q) )^\sim=1,
\end{equation}
pour $1\leq i<k \leq n$, $p$ et $q$ dans  $S\setminus S_*$ avec $q\notin \mathcal{O}(p)$.
\begin{equation}\label{stab1}
 (x_j(p)\vert x_{ij}(gh^0)\vert x_{ij}(gh^1)\vert \cdots\vert x_{ij}(gh^{\vert \mathrm{stab}(p)\vert-1} )\vert x_i(g\cdot p),x_{ij}(g))^\sim=1,\\
\end{equation}
\begin{equation}\label{stab2}
 (x_{ij}(gh^0)\vert x_{ij}(gh^1)\vert \cdots\vert x_{ij}(gh^{\vert \mathrm{stab}(p)\vert-1} )\vert x_i(g\cdot p),x_j(p))^\sim=1,\\
\end{equation}
\\
pour $1\leq i<j\leq n$, $g\in H$, $p\in S\setminus S_*$ et $h$ un générateur de $\mathrm{stab}(p)$,

\begin{equation}\label{deg1}
 (X_{u^i_1}\cdots X_{u^i_r})=1,
\end{equation}\\
pour tout $i\in [1,n]$, $S=S^2$, $u^i_1,\cdots,u^i_r$ est une énumeration des éléments de $\{(k,l,g) \in [1,n]^2\times G \vert k<l , i\in\{k,l\} \} \cup(\PP^1\setminus \PP^1_*)$, $X_{u^i_\alpha}$ est un conjugué de $x_{kl}(g)$ si $u^i_\alpha=(k,l,g)$ ou un conjugué de $x_{i}(q)$ si $u^i_\alpha=q \in (\PP^1\setminus \PP^1_*)$.
\end{proposition}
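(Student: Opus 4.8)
The plan is to obtain each family of relations by passing to a \emph{coordinate slice} of $\mathrm{Cf}_H(n,S_*)$ — the subspace in which all of $p_1,\dots,p_n$ except one or two of them are frozen at their base values $q_m$ — and computing $\pi_1$ of the resulting, much smaller, space. The inclusion of such a slice is $\pi_1$-functorial, and by Proposition \ref{conj mer} / Proposition \ref{meridien} the loops $x_{ij}(g)$ and $x_k(q)$ are conjugate in $\pi_1(\mathrm{Cf}_H(n,S_*))$ to meridians supported inside the slice; hence every relation holding in the slice transports to a relation of the required ``$\sim$'' type. The slices that occur are: a product of two circles (for the commutation relations between loops moving distinct coordinates), the ordered two-point configuration space of a once-punctured disk (for (\ref{3indices})), a two-point $\mu_N$-orbit configuration space of a disk carrying a rotation action (for (\ref{stab1}) and (\ref{stab2})), and $\PP^1$ minus a finite set (for (\ref{deg1})).

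The commutation relations (\ref{2indices}), (\ref{3indicesp}) and (\ref{2ind-orb}) all reduce to the same observation. In each bracket the two loops move two distinct coordinates $p_a$, $p_b$ $(a\neq b)$ around centres $t_a$, $t_b$, where each $t$ is either a point of $S\setminus S_*$ or a translate $g\cdot q_m$ of a frozen coordinate $(m\notin\{a,b\})$. Freezing the remaining coordinates gives the slice $\mathrm{Cf}_H(2,\Sigma)$, $\Sigma=S\setminus\bigl((S\setminus S_*)\cup\bigcup_{m\neq a,b}H\cdot q_m\bigr)$, inside which the two loops are meridians of the punctures $t_a$ (for the first coordinate) and $t_b$ (for the second). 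The distinctness hypotheses — $q\notin\mathcal O(p)$ in (\ref{2ind-orb}); distinct indices in (\ref{2indices}), which for each of the three cyclic choices $(x_{ij},x_{kl})$, $(x_{il},x_{jk})$, $(x_{ik},x_{jl})$ one checks directly from $i<j<k<l$; a point of $S\setminus S_*$ versus one of $S_*$ in (\ref{3indicesp}) — say exactly that $(H\cdot t_a)\cap(H\cdot t_b)=\emptyset$. Choosing the two meridians as localized loops (a tiny circle joined to the base point by a tail) whose supports, together with all their $H$-translates, are pairwise disjoint, they lie in complementary factors of a product $U_a\times U_b\subset\mathrm{Cf}_H(2,\Sigma)$, hence commute.

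The three-term relation (\ref{3indices}) is the heart of the proposition. Freeze all coordinates but $p_i$, $p_j$ and set $p:=h\cdot q_k$, a puncture of $\Sigma=S\setminus\bigl((S\setminus S_*)\cup\bigcup_{m\neq i,j}H\cdot q_m\bigr)$ with trivial stabilizer. In $\pi_1(\mathrm{Cf}_H(2,\Sigma))$ the three relations become relations among $a:=x_{12}(g)$ (a meridian of $\{p_1=gp_2\}$), $b:=x_1(g\cdot p)$ (a meridian of $\{p_1=g\cdot p\}$) and $c:=x_2(p)$ (a meridian of $\{p_2=p\}$). Choose the base point so that $p_2$ lies in a small disk $B\ni p$, $p_1$ lies in $gB$, both $g\cdot p$ and $g\cdot p_2$ lie in $gB$, and all the disks $g'B$ $(g'\in H)$ are pairwise disjoint; then $a$, $b$, $c$ are supported in $B\sqcup gB$ and so lie in the subspace
\[
Y=\{(p_1,p_2)\in gB\times B\mid p_1\neq g\cdot p,\ p_2\neq p,\ p_1\neq gp_2\},
\]
which $(p_1,p_2)\mapsto(g^{-1}p_1,p_2)$ identifies with the ordered two-point configuration space of the once-punctured disk $B\setminus\{p\}$, so that $\pi_1(Y)$ is the pure braid group $P_3$ on the three strands $p_1$, $p_2$, $p$. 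Since a meridian of a given submanifold is unique up to conjugacy (Proposition \ref{meridien}), one may take $a$, $b$, $c$ to be the standard generators $A_{12}$, $A_{13}$, $A_{23}$ of $\pi_1(Y)$, for which $w:=abc$ is the full twist, hence central; centrality of $w$ (whence also $w=bca=cab$) gives $c(ab)=(ab)c$, $a(bc)=(bc)a$ and $b(ca)=(ca)b$, which are precisely the three identities of (\ref{3indices}) (each side being $w$). The main obstacle is the geometric set-up of $Y$ and the verification — via Proposition \ref{meridien}, and Proposition \ref{disc,stab} to produce $B$ and control its $H$-translates — that the three meridians can indeed be realized inside it; the braid-group input is then standard.

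It remains to treat (\ref{stab1}), (\ref{stab2}) and (\ref{deg1}). For (\ref{stab1}), (\ref{stab2}), freeze all coordinates but $p_i$, $p_j$; now $p\in S\setminus S_*$ is a puncture of $\Sigma$ with $\mathrm{stab}(p)$ cyclic of order $N$, and Proposition \ref{disc,stab} furnishes a disk $D_p\ni p$ on which $\mathrm{stab}(p)$ acts as $\mu_N$ by rotation, with $D_p\cap(H\cdot x)=\mathrm{stab}(p)\cdot x$; shrinking $D_p$ to miss the other punctures, $\mathrm{Cf}_{\mu_N}(2,D_p\setminus\{p\})$ embeds in $\mathrm{Cf}_H(2,\Sigma)$. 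Applying the self-homeomorphism $(p_1,p_2)\mapsto(g^{-1}p_1,p_2)$ brings the two clusters of relevant points — the translates $g\cdot q_j,\ gh\cdot q_j,\dots$ and $g\cdot p$ near $gp$ on one side, and $p$ on the other — into a single copy of $D_p$, in which $p_2$ sits near $p$ and its $\mathrm{stab}(p)$-orbit together with $p$ is exactly the ``stable simple curve'' configuration of the remark following Lemma \ref{DT}. The twisted-homotopy argument of Lemma \ref{DT} then applies verbatim inside $D_p$, and transplanting the resulting homotopies into $\mathrm{Cf}_H(n,S_*)$ (they stay there because $D_p$ is disjoint from everything else that matters) yields (\ref{stab1}) and (\ref{stab2}). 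Finally, for (\ref{deg1}) take $S=S^2$ and freeze all coordinates but $p_i$: the slice is $\PP^1\setminus F_i$ with $F_i=\bigcup_{m\neq i}G\cdot q_m\cup(\PP^1\setminus\PP^1_*)$, and the points of $F_i$ correspond bijectively to the index set $\{(k,l,g)\mid k<l,\ i\in\{k,l\}\}\cup(\PP^1\setminus\PP^1_*)$ — here one uses that $x_{ki}(g)$ and $x_{ik}(g^{-1})$ are meridians of the same submanifold $\{p_k=gp_i\}$, hence conjugate (Proposition \ref{meridien}), to read the terms with $k<i$ as coordinate-$i$ loops. The group $\pi_1(\PP^1\setminus F_i,q_i)$ is generated by meridians around the points of $F_i$ subject to the single relation that their product, in the cyclic order around $q_i$, is trivial; pushing this relation forward along the coordinate-$i$ inclusion produces (\ref{deg1}), with the $X_{u^i_\alpha}$ the images of those meridians.
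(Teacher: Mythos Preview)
Your argument is correct and covers all the families of relations. The overall architecture---pass to a coordinate slice, identify the relevant loops with meridians there via Proposition~\ref{meridien}/\ref{conj mer}, and read off the relation in the small model---is exactly the paper's strategy for (\ref{3indicesp}), (\ref{2ind-orb}), (\ref{stab1})--(\ref{stab2}) and (\ref{deg1}).

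Where you diverge is in the treatment of (\ref{2indices}) and (\ref{3indices}), and in the passage from $g=1$ to general $g$ in (\ref{stab1})--(\ref{stab2}). The paper does \emph{not} build a local $P_3$ model; instead it places all the $q_m$ in a small disk meeting each $H$-orbit at most once, invokes the classical pure-braid relations for the neutral loops $x_{ij}(1)$, and then transports these to arbitrary $g,h$ by the $H^n$-action on conjugacy classes (Proposition~\ref{G.Lacet}). The same device---prove the $g=1$ case via Lemma~\ref{DT}, then apply $f_\sigma(g_i)$---is how the paper obtains (\ref{stab1})--(\ref{stab2}) for general $g$. Your route replaces this $H^n$-action machinery by an explicit coordinate twist $(p_1,p_2)\mapsto(g^{-1}p_1,p_2)$, which lands you directly in the once-punctured-disk configuration space and lets you read the three identities of (\ref{3indices}) off the centrality of the full twist in $P_3$. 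This is a legitimate and slightly more self-contained alternative: you never need the orbifold fundamental group or Proposition~\ref{G.Lacet}. The price is that the geometric set-up (building $Y$, checking via Proposition~\ref{disc,stab} that the $H$-translates of $B$ are disjoint, and matching orientations so that your $a,b,c$ really are the standard $A_{12},A_{13},A_{23}$) has to be done carefully by hand each time, whereas the paper's approach factors this bookkeeping into a single reusable statement about the $H^n$-action.
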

\begin{proof}
On a vu que les $x_{ij}(g)$ et les $x_i(q)$ sont des méridiens pour certaines variétés. Donc, pour montrer la proposition, il suffit de montrer chacune des relations pour un $\underline{q}_n$ arbitraire (voir corollaire \ref{conjMeridian}). \\
Supposons que les composantes de $\underline{q}_n$ sont concentrées dans un disque ne contenant pas deux éléments d'une même orbite pour l'action de $H$. Vu l'hypothèse imposée à $\underline{q}_n$, les $x_{ij}(1)$ vérifient à conjugaison près les relations de tresses pures (voir \cite{Art} pour la définition des relations de tresses pures), c'est à dire :
\begin{align*}
 (x_{ij}(1),x_{kl}(1))^\sim=(x_{il}(1), x_{jk}(1))^\sim=(x_{ik}(1), x_{jl}(1))^\sim=1 ,
\end{align*}
\begin{align*}
(x_{ij}(1),x_{ik}(1) \vert x_{jk}(1))^\sim&=( x_{jk}(1),x_{ij}(1)\vert x_{ik}(1))^\sim\endline
&=(x_{ik}(1), x_{jk}(1)\vert x_{ij}(1))^\sim=1,
\end{align*}
dans $\pi_1(\mathrm{Cf}_H(n,S_*),\underline{q}_n)$. Pour montrer (\ref{2indices}) et (\ref{3indices}), il suffit de calculer à l'aide de la proposition \ref{G.Lacet} l'image du premier commutateur par $f_\sigma(g_ih_k)$, l'image du deuxième et du troisième par $f_\sigma(g_ih_j)$ et celle des trois derniers par $f_\sigma(g_ih_k^{-1})$.\\\\
On va montrer la relation $(x_{i}(p), x_{jk}(g) )^\sim=1$ de (\ref{3indicesp}), les deux autres relations de la même équation étant similaires. On commence par prendre un petit disque $D$ autour de $p$, puis on choisit $\underline{q}_n$ tel que $q_i$ soit dans $D$ et les $q_r$ soient à l'extérieur de $H\cdot D$ ($r\neq i$). Ensuite, on choisit un $x'_{jk}(g)$ (conjugué de $x_{jk}(g)$) dont les $r$-ième composantes ($r\neq i$) ne coupent pas $H\cdot D$, puis un conjugué $x'_i(p)$ de $x_i(p)$ dont la $i$-ème composante est à image dans $D\setminus \{p\}$. Les lacets $x'_i(p)$ et $x'_{jk}(g)$ ainsi obtenus commutent.\\\\
Soit $p$ et $q$ dans  $S\setminus S_*$ avec $q\notin \mathcal{O}(p)$. Compte tenu de l'hypothèse sur $p$ et $q$, on peut trouver deux disques $D_p$ et $D_q$ centrés respectivement en $p$ et $q$ tels que $D_p \cap (H\cdot D_q)=\emptyset$. Choisissons $\underline{q}_n$ de façon à ce que $q_i$ soit dans $D_p$, $q_k$ soit dans $D_q$ et les autres composantes soient à l'exterieur de $H\cdot (D_p\cup D_q)$. Enfin, prenons des $\gamma_i(p)$, $\gamma_k(q)$ (les lacets qui permettent de définir $x_i(p)$ et $x_k(q)$), tels que $\mathrm{Im}(\gamma_i(p))\subset D_p$ et $\mathrm{Im}(\gamma_k(q))\subset D_q $. Les lacets $x_i(p)$ et $x_k(q)$ ainsi choisis commutent. Ce qui prouve (\ref{2ind-orb}).\\\\ 
Démontrons (\ref{stab1}) et (\ref{stab2}). Soit $D_p$ un disque autour de $p$ comme dans la proposition \ref{disc,stab}. Notons $w : \C\overset{\simeq}{\longrightarrow}  D_p$ et $w' : \mu_N \overset{\simeq}{\longrightarrow} \mathrm{stab}(p)$ les identifications qui établissent l'équivalence entre l'action de $\mu_N$ sur $\C$ et celle de $\mathrm{stab}(p)$ sur $D_p$. Choisissons un $\underline{q}_n$ tel que $q_i=w(2)$, $q_j=w(1)$ et $q_r \notin \underset{h\in H}{\cup} h\cdot D_p$, pour $r\neq i,j$. En utilisant (2) de la proposition \ref{disc,stab}, on voit qu'on a une inclusion :
\begin{align*}
w_{ij}  : \mathrm{Cf}_{\mu_N}(2,\C) & \to \mathrm{Cf}_H(n,S_*) \\
(x,y) & \mapsto (q_1,\cdots,q_{i-1},w(x),q_{i+1},\cdots ,q_{j-1},w(y),q_{j+1},\cdots,q_n).
\end{align*}
Remarquons que $w_{ij}\circ x_{12}(\zeta)$, pour $\zeta \in \mu_N$ est un méridien pour $(i,j,w'(\zeta))$. Donc, (1) et (2) du lemme \ref{DT} donnent après application de $w_{ij}$ :
$$(x_j(p) \vert x_{ij}(1) \vert x_{ij}(w'(e^{\frac{2i\pi}{N}}))\vert  \cdots \vert x_{ij}(w'(e^{\frac{2i\pi(N-1)}{N}}))\vert x_i(p) ,x_{ij}(1))^\sim=1,$$
$$(x_{ij}(1) \vert x_{ij}(w'(e^{\frac{2i\pi}{N}}))\vert  \cdots \vert x_{ij}(w'(e^{\frac{2i\pi(N-1)}{N}}))\vert x_{i}(p),x_{j}(p))^\sim=1.$$
Ce qui montre (\ref{stab1}) et (\ref{stab2}) pour $g=1$. Pour obtenir (\ref{stab1}) et (\ref{stab2}) pour $g$ quelconque, il suffit de calculer l'image par $f_\sigma(g_i)$ des deux dernières relations obtenues.\\\\
Pour montrer la dernière relation, observons l'ensemble : 
 \[\mathrm{Fib}^{(i)}=\mathrm{Cf}_H(n,S_*^2) \cap  (\{q_1\}\times \cdots \times\{ q_{i-1}\} \times S^2\times \{q_{i+1}\}\times \cdots \times\{ q_{n}\} ).\]
C'est  $S^2_*$ privée de l'union $E_i:=\cup_{k\neq i } (H\cdot q_k)$. Donc, un lacet contractile de $\mathrm{Fib}^{(i)}$ s'écrit comme un produit des lacets $\alpha_i(p)$, pour $p$ parcourant $E_i$ et $S^2\setminus S^2_*$, $\alpha_i(p)$ est basé en $q_i$ et fait un tour autour de $p$ en évitant les autres points de $E_i$. Soit $p$ de la forme $p=h\cdot q_j$ pour un certain $h\in H$ et un $j\in[1,n]\setminus \{i\} $. Si $j>i$, le lacet $\alpha _i(p)$ n'est qu'un conjugué de $x_{ij}(h)$. Sinon, $\alpha_i(p)$ est un méridien pour $(j,i,h^{-1})$ et donc il est conjugué à $x_{ji}(h^{-1})$. Ce qui montre la dernière relation.
\end{proof}
\section{ Rappels sur l'algèbre de Lie de Malcev d'un groupe}
On rappelle des notions reliées à l'algèbre de Lie de Malcev d'un groupe. Ce matériel provient de \cite{Q1} et \cite{Q2}. \\\\
Soit $\K$ un corps de caractéristique nulle et $A$ une algèbre de Hopf complète sur $\K$. Par définition, $A$ est munie d'une filtration décroissante $\{F_kA\}_{k\geq 0}$ multiplicative de sous-espaces vectoriels, telle que $F_0A=A$, $F_1A$ est l'idéal d'augmentation et la diagonale $\Delta : A \to A\widehat{\otimes}A $ est un morphisme d'algèbres filtrés. Cette filtration induit sur l'ensemble des éléments primitifs de $A$ une filtration d'algèbre de Lie :
\begin{equation}\label{filH} F_{k} \mathcal{P}(A)=\mathcal{P}(A)\cap F_k A, \: \text{pour} \: k\geq 1, \end{equation}
c'est-à-dire $[F_m \mathcal{P}(A),F_l \mathcal{P}(A)] \subset F_{m+l} \mathcal{P}(A)$, pour $m,l\geq1$. En particulier, le $k$-ième terme de la filtration centrale descendante de $\mathcal{P}(A)$ est inclus dans $F_k\mathcal{P}(A)$. \\
De plus, on a les bijections inverses $  \mathcal{G}(A) \underset{\mathrm{exp}}{\overset{\mathrm{log}}{\rightleftarrows}} \mathcal{P}(A)$, où $\mathrm{log}$ et $\mathrm{exp}$ sont les séries usuelles. La multiplication sur $\mathcal{G}(A)$ est donnée par : $$\mathrm{exp}(x)\mathrm{exp}(y)=\mathrm{exp}(\mathfrak{h}(x,y)), \: \text{pour}\: x,y \in \mathcal{P}(A) $$
où $\mathfrak{h}(x,y)=x+y +\frac{1}{2} [x,y] + \mathfrak{h}_{\geq 3}(x,y)$ avec $\mathfrak{h}_{\geq 3}(x,y) \in F_3 \mathcal{P}(A)$ est la série de Baker-Campbell-Hausdorff.\\\\
On considérera les complétions de deux types d'algèbres de Hopf relativement aux puissances de leur idéal d'augmentation :  l'algèbre $\K [\Gamma]$ d'un groupe $\Gamma$ ou l'algèbre enveloppante $\mathrm{U} (\mathfrak{g})$ d'une l'algèbre de Lie $\mathfrak{g}$. On notera $\K [\Gamma]^{\widehat{ \: \:}}$ et ${\widehat{\mathrm{U}}}(\mathfrak{g})$ ces complétés.\\\\
La $\K$-algèbre de Lie de Malcev du groupe $\Gamma$ est l'algèbre de Lie $\mathrm{Lie}(\Gamma(\K))$ formée des primitifs de $\K [\Gamma]^{\widehat{ \: \:}}$, munie de la filtration $\{F_k \mathrm{Lie}(\Gamma(\K))\}_{k\geq 1} $ comme dans (\ref{filH}). C'est une algèbre de Lie complète : 
$$\mathrm{Lie}(\Gamma(\K))=\underset{\longleftarrow}{\mathrm{lim}} \ \mathrm{Lie}(\Gamma(\K))/ F_k \mathrm{Lie}(\Gamma(\K)).$$

On note $\mathrm{gr}\mathrm{Lie}(\Gamma(\K))$ le gradué associé de $\mathrm{Lie}(\Gamma(\K))$ pour la filtration $\{F_k \mathrm{Lie}(\Gamma(\K))\}_{k\geq 1} $.  L'algèbre de Lie :  
\begin{equation}\label{dec}
\mathrm{gr} \mathrm{Lie}(\Gamma(\K))= {\bigoplus}_{k>0} \: \mathrm{gr}_k \mathrm{Lie}(\Gamma(\K))\end{equation} 
est engendrée par sa composante de degré un $ \mathrm{gr}_1 \mathrm{Lie}(\Gamma(\K))$. Soit $I$ l'idéal d'augmentation de $\K[\Gamma]$. On a la suite d'isomorphismes :
\begin{equation}\label{Deg1}
\mathrm{gr}_1 \mathrm{Lie}(\Gamma(\K)) \overset{\sim}{\longrightarrow} I/I^2 \overset{\sim}{\longrightarrow} \Gamma^{ab}\otimes_{\Z} \K
\end{equation}
qui pour $x\in \Gamma$, identifient la classe de $\mathrm{log}(x)$ dans $\mathrm{gr}_1 \mathrm{Lie}(\widehat{\Gamma}(\K))$ à $[x]\otimes 1$ dans $\Gamma^{ab}\otimes_{\Z} \K$ où $[x]$ est la classe de $x$ dans l'abélianisé $\Gamma^{ab}$  de $\Gamma$.
     
\section{L'isomorphisme entre $\mathrm{Lie}(\Gamma_n(\C))$, $\widehat{\mathrm{gr}}\mathrm{Lie}(\Gamma_n(\C))$ et $\widehat{\mathfrak{p}}_n(G)(\C)$}
Dans cette section on établit l'isomorphsime entre les trois algèbres de Lie $\mathrm{Lie}(\Gamma_n(\C))$, $\widehat{\mathrm{gr}}\mathrm{Lie}(\Gamma_n(\C))$ et $\widehat{\mathfrak{p}}_n(G)(\C)$. En s'appuyant sur les relations dans  $\Gamma_n:=\pi_1(\mathrm{Cf}_G(n,\PP^1_*))$ obtenues dans la section 3, on construit (sous-section 5.1) un morphisme  surjectif $\phi_{\K}$ de $\widehat{\mathfrak{p}}_n(G)(\K) $ dans $\widehat{\mathrm{gr}}\mathrm{Lie}(\Gamma_n(\K))$, pour $\K$ un corps de caractéristique nulle (ces objets sont définis dans la sous-section 1.2 et la section 4). Dans la sous-section 5.2, on utilise la représentation de monodromie de la section 1 pour obtenir un morphisme de Lie surjectif  $\mathrm{Lie(\rho)} : \mathrm{Lie}(\Gamma_n(\C)) \to \widehat{\mathfrak{p}}_n(G)(\C)$. Enfin, dans la sous-section 5.3, on utilise ces deux morphismes pour montrer l'isomorphisme annoncé.\\\\ 
Pour alléger les notations, on utilisera parfois (dans les démonstrations) $\mathfrak{p}_n$ pour désigner $\mathfrak{p}_n(G)(\K)$ et on omettera $\K$ dans $\mathrm{Lie}(\Gamma_n(\K))$. La classe de $x\in  F_k \mathrm{Lie}(\Gamma_n(\K))$ dans $\mathrm{gr}\mathrm{Lie}(\Gamma_n(\K))$ sera notée $[x]_k$. On notera $y$ l'image de $y \in \Gamma$ dans $\K [\Gamma]^{\widehat{ \: \:}}$.

\subsection{ Construction d'un morphisme $\widehat{\mathfrak{p}}_n(G)(\K) \to \widehat{\mathrm{gr}}\mathrm{Lie}(\Gamma_n(\K))$} Dans cette sous-section on construit le morphisme annoncé, pour $\K$ un corps de caractéristique nulle. On commence par démontrer que $\mathfrak{p}_n(G)$ admet une variante de la présentation de la définition \ref{def AL} : 
\begin{lemma}\label{presAL}L'algèbre de Lie $\mathfrak{t}_n(G)$ engendrée par les $X^{ij}(g)$ et $X^k(q)$, pour $1\leq i<j\leq n $, $g\in G$, $k\in [1,n]$ et $q\in \PP^1\setminus\PP^1_*$, soumis au relations : 
\begin{equation}\label{1}
\underset{q\in \PP^1\setminus \PP^1_*}{\sum} X^i(q) +\underset{g\in G}{\sum}  \: (\underset{j \vert j>i}{\sum}X^{ij}(g)+\underset{j\vert j<i}{\sum}X^{ji}(g) \: )=0,\end{equation}
pour $i\in[1,n]$, 
\begin{equation}\label{2}
 [X^{ij}(g),X^{kl}(h)]=[X^{il}(g), X^{jk}(h)]=[X^{ik}(g), X^{jl}(h)]=0 ,
\end{equation}
pour $1\leq i<j<k<l \leq n$ et $g,h \in G$,
\begin{align}\label{3}
[X^{ij}(g),X^{ik}(gh) + X^{jk}(h)]&=[ X^{jk}(h),X^{ij}(g)+X^{ik}(gh)]\endline
&=[X^{ik}(gh), X^{jk}(h)+ X^{ij}(g)]=0,\endline
[X^{i}(p),X^{jk}(g)]&=[X^{j}(p),X^{ik}(g)]=[X^{k}(p),X^{ij}(g)]=0,
\end{align}
pour $1\leq i<j<k \leq n$ et $g,h \in G$,

\begin{equation}  \label{5}[X^{i}(p),X^{j}(q)]=0,\end{equation}
 \begin{equation}\label{6}[X^{ij}(g),X^j(p) +X^i(g\cdot p) +\: \underset{h\in \mathrm{stab}(p)}{\sum} X^{ij}(gh)]=0,\end{equation} 
\begin{equation}\label{7} [X^{j}(p),X^i(g\cdot p) +\: \underset{h\in \mathrm{stab}(p)}{\sum} X^{ij}(gh)]=0,\end{equation}

pour $i,j \in [1,n]$ avec $i<j$, $g\in G$, $p \in \PP^1\setminus \PP^1_*$ et $q\in (\PP^1\setminus \PP^1_*)\setminus \mathcal{O}(p)$,\\\\
est isomorphe à $\mathfrak{p}_n(G)$. L'isomorphisme est donné par $X^{ij}(g)\mapsto X_{ij}(g)$ $(\text{pour $i<j$ et $g\in G$})$ et $X^{k}(q)\mapsto X_{k}(q)$ $(\text{pour $k\in[1,n]$ et $q\in \PP^1\setminus \PP^1_*$})$. 
\end{lemma}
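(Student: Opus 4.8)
The strategy is to exhibit mutually inverse Lie algebra morphisms between $\mathfrak{t}_n(G)$ and $\mathfrak{p}_n(G)$. The map $\Phi\colon \mathfrak{t}_n(G)\to\mathfrak{p}_n(G)$ sending $X^{ij}(g)\mapsto X_{ij}(g)$ (for $i<j$) and $X^k(q)\mapsto X_k(q)$ is the candidate; to go back, I would use relation (1) of Definition \ref{def AL} to define on the generators of $\mathfrak{p}_n(G)$ a map $\Psi$ by $X_{ij}(g)\mapsto X^{ij}(g)$ when $i<j$ and $X_{ij}(g)\mapsto X^{ji}(g^{-1})$ when $i>j$, together with $X_i(q)\mapsto X^i(q)$. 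The whole content of the lemma is that both $\Phi$ and $\Psi$ are well defined, i.e. respect the defining relations, since they are then visibly inverse to each other on generators.

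\textbf{Well-definedness of $\Phi$.} Here I would check that the images under $\Phi$ of relations (\ref{1})--(\ref{7}) of $\mathfrak{t}_n(G)$ hold in $\mathfrak{p}_n(G)$. Most are immediate: (\ref{1}) is relation (2) of Definition \ref{def AL} after using (1) to rewrite $X_{ij}(g)=X_{ji}(g^{-1})$; (\ref{5}), (\ref{6}), (\ref{7}) are literally relations (5), (6), (7); the second line of (\ref{3}) is the relation $[X_i(p),X_{jk}(g')]=0$ from (4). The first line of (\ref{3}) and relation (\ref{2}) require a small symmetrization argument: relation (4) of Definition \ref{def AL}, namely $[X_{ij}(g),X_{kj}(g'g)+X_{ki}(g')]=0$, must be shown, via (1), to be equivalent to all three cyclic forms appearing in (\ref{3}), and relation (3) of Definition \ref{def AL} (vanishing of brackets on four distinct indices) must be shown to imply the three index-pairing patterns in (\ref{2}). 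This is the same kind of bookkeeping one does when passing between the two standard presentations of the pure braid / Drinfeld--Kohno Lie algebra, applied here with the extra group label $g$.

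\textbf{Well-definedness of $\Psi$.} Conversely I would check that the images under $\Psi$ of relations (1)--(7) of Definition \ref{def AL} hold in $\mathfrak{t}_n(G)$. Relation (1) becomes a tautology by the very definition of $\Psi$. Relations (2)--(7), after sorting indices into increasing order and applying $\Psi$, land among the relations (\ref{1})--(\ref{7}) of $\mathfrak{t}_n(G)$ and their consequences; again the only real work is relations (3) and (4) of Definition \ref{def AL}, where one must reconstruct, from (\ref{2}) and (\ref{3}) respectively, the versions with arbitrarily ordered indices. Once both $\Phi$ and $\Psi$ are known to be well-defined Lie algebra morphisms, the composites $\Psi\circ\Phi$ and $\Phi\circ\Psi$ fix every generator, hence are the identity, and the isomorphism claimed in the lemma follows, with the stated formula $X^{ij}(g)\mapsto X_{ij}(g)$, $X^k(q)\mapsto X_k(q)$.

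\textbf{Main obstacle.} The bottleneck is purely combinatorial: verifying that the single ``asymmetric'' relation (4) of Definition \ref{def AL} together with the symmetry relation (1) generates the full symmetric family (\ref{2})--(\ref{3}) in $\mathfrak{t}_n(G)$, and vice versa, for every ordering of the indices $i,j,k$ (and $i,j,k,l$) and every choice of group elements. I would organize this by fixing $i<j<k$, writing each bracket relation in terms of the generators $X^{ij}(g)$, $X^{ik}(g)$, $X^{jk}(g)$ with increasing indices only, and checking the cyclic equivalences one at a time; the group labels transform exactly as dictated by substituting $X_{ba}(h^{-1})$ for $X_{ab}(h)$ when $a>b$. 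No genuinely new idea is needed beyond this careful substitution, but it is the step most prone to sign/label errors and so deserves to be written out with care.
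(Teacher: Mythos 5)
Your overall strategy (treat both objects as presented Lie algebras and check that the relations correspond under the substitution $X_{ij}(g)=X_{ji}(g^{-1})$) is essentially the paper's: there one adjoins redundant generators $X^{ji}(g):=X^{ij}(g^{-1})$ to $\mathfrak{t}_n(G)$ and matches the two presentations directly. The verifications you single out as the main obstacle, namely recovering relations (3) and (4) of Definition \ref{def AL} for all orderings of the indices from (\ref{2}) and (\ref{3}), are indeed pure bookkeeping: sorting each pair of indices and relabelling the group elements suffices. Likewise the reversed-order case of relation (6) of Definition \ref{def AL} follows from (\ref{6}) by the substitution $G=g^{-1}$, $P=g\cdot p$, using $\mathrm{stab}(g\cdot p)=g\,\mathrm{stab}(p)\,g^{-1}$.

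There is, however, a genuine gap at a point your outline declares routine. Relation (7) of Definition \ref{def AL} is imposed for all $i\neq j$, while (\ref{7}) only covers the case where the point-carrying generator sits at the larger index. The missing case, i.e. (for $i<j$) the relation $[X_{i}(p),\,X_{j}(g\cdot p)+\sum_{h\in\mathrm{stab}(p)}X_{ji}(gh)]=0$, is not obtained from (\ref{1})--(\ref{7}) by any reindexing: no relation of the lemma has the point generator of the smaller index alone in the first slot against a same-orbit combination, and (\ref{5}) is unavailable because the two points lie in the same orbit. So the well-definedness of your map $\Psi$ is not established by the sorting argument. The needed derivation is exactly the substantive step of the paper's proof: put $Y=X^{j}(p)+X^{i}(g\cdot p)+\sum_{h\in\mathrm{stab}(p)}X^{ij}(gh)$ and expand $0=[Y,Y]$; the term $[X^{j}(p),Y]$ vanishes by (\ref{7}), each term $[X^{ij}(gh'),Y]$ vanishes by (\ref{6}) because $Y$ is unchanged when $g$ is replaced by $gh'$ for $h'\in\mathrm{stab}(p)$, and one concludes $[X^{i}(g\cdot p),Y]=0$; setting $q=g\cdot p$ and rewriting with $X^{ij}(g)=X^{ji}(g^{-1})$ (and reindexing the sum over the stabilizer) gives precisely the missing case of relation (7). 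Without this antisymmetry argument, or an equivalent one, your plan stalls at the well-definedness of $\Psi$; with it added, the rest of your two-morphism argument goes through as you describe.
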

\begin{proof}
On adjoint à $\mathfrak{t}_n(G)$ les éléments $X^{ji}(g)$ pour $1\leq i<j \leq n$ et $g\in G$ et on impose la relation :
 \begin{equation}\label{adjo} 
X^{ij}(g)=X^{ji}(g^{-1}).\end{equation}
Cela ne change pas $\mathfrak{t}_n(G)$. En utilisant la nouvelle relation $X^{ij}(g)=X^{ji}(g^{-1})$, on vérifie qu'on peut transfomer  les relations (\ref{1}) jusqu'a (\ref{6}) de ce lemme en les relations (2) jusqu'a (6) de la définition \ref{def AL} via l'identification $X^{ab}(g)\mapsto X_{ab}(g), X^c(q)\mapsto X_c(q)$ $(\text{pour }a,b,c\in [1,n], \text{ avec } a\neq b)$.\\
On s'intéresse à la relation (\ref{7}) (satisfaite pour $i<j$) qui est similaire à (7) (satisfaite pour $i\neq j$). L'antisymétrie du crochet donne : 
$$ [X^j(p), X] +[X^i(g\cdot p) ,X] + \underset{h'\in \mathrm{stab}(p)}{\sum} [X^{ij}(gh'),X]=0,$$ 
pour $X=X^j(p) +X^i(g\cdot p) +\: \underset{h\in \mathrm{stab}(p)}{\sum} X^{ij}(gh)$. Or, le crochet $[X^j(p), X]$ est nul d'après (\ref{7}) et $[X^{ij}(gh'),X]$ est nul d'après (\ref{6}). D'où : $$ [X^i(g\cdot p), X^j(p) +X^i(g\cdot p) +\: \underset{h\in \mathrm{stab}(p)}{\sum} X^{ij}(gh)]=0.$$
En appliquant $ X^{ij}(g)=X^{ji}(g^{-1})$ et en réindexant, on trouve :
$$ [X^i(q), X^j(g^{-1}\cdot q) +\: \underset{h\in \mathrm{stab}(q)}{\sum} X^{ji}(g^{-1}h')]=0 $$ 
où $q=g\cdot p$. Donc la relation (7) est aussi satisfaite dans $\mathfrak{t}_n(G)$ (via l'identification précédente). Ce qui montre le lemme.
\end{proof}

\begin{proposition}\label{pres gr}
On a un morphisme d'algèbres de Lie filtrées :
\[\phi_{\K} : \widehat{\mathfrak{p}}_n(G)(\K) \longrightarrow \widehat{\mathrm{gr}}\mathrm{Lie}(\Gamma_n(\K)), \]
donné par $\phi(X_{ij}(g))=[\mathrm{log}(x_{ij}(g))]_1$, pour $1 \leq  i<j \leq n$ et $g\in G$, où les complétions et les filtrations sont induites par le degré.
\end{proposition}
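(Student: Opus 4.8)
The plan is to use the alternative presentation of $\mathfrak{p}_n(G)$ from Lemma~\ref{presAL}: to define $\phi_\K$ it suffices to send the generators $X^{ij}(g)$ (for $i<j$) to $[\mathrm{log}(x_{ij}(g))]_1$ and $X^k(q)$ to $[\mathrm{log}(x_k(q))]_1$ in $\mathrm{gr}_1\mathrm{Lie}(\Gamma_n(\K))$, and to check that each of the defining relations \eqref{1}--\eqref{7} of $\mathfrak{t}_n(G)$ is mapped to $0$. Since $\mathrm{gr}\,\mathrm{Lie}(\Gamma_n(\K))$ is generated in degree one (cf.\ \eqref{dec}), and $\widehat{\mathfrak{p}}_n(G)(\K)$ is the degree-completion of the free-in-degree-one Lie algebra modulo relations, a well-defined Lie map on generators that kills the relations extends uniquely to the completions and is automatically a morphism of filtered Lie algebras (it raises the degree filtration the right way since generators go to degree-one elements). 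So the whole content is the verification on relations.

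First I would record the two basic identifications coming from Section~4: via \eqref{Deg1}, $\mathrm{gr}_1\mathrm{Lie}(\Gamma_n(\K))\cong \Gamma_n^{ab}\otimes_\Z\K$, sending $[\mathrm{log}(x)]_1$ to $[x]\otimes 1$. Hence the degree-one relation \eqref{1} (which is exactly the image of relation \eqref{deg1} of Proposition~\ref{Rel Quad}, read additively in $\Gamma_n^{ab}$) holds: the relation \eqref{deg1} says a certain product of conjugates of the $x_{ij}(g)$ and $x_i(q)$ is trivial in $\Gamma_n$, and conjugation is invisible in $\Gamma_n^{ab}$, so the corresponding sum of $[\mathrm{log}(\cdot)]_1$ vanishes. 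This is the cleanest case and I would do it first to fix conventions (in particular the bookkeeping $X^{ij}(g)$ vs.\ $X^{ji}(g^{-1})$, which matches $x_{ij}(g)$ being a conjugate of $x_{ji}(g^{-1})$ as a free loop).

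Next, the bracket relations \eqref{2}--\eqref{7}: each of these is the degree-$2$ shadow of one of the quadratic relations \eqref{2indices}--\eqref{stab2} of Proposition~\ref{Rel Quad}. The mechanism is the standard one: if $(a_1\cdots a_r,\,b_1\cdots b_s)^\sim=1$ in $\Gamma_n$ with all $a_i,b_j$ among our generators, then in $\K[\Gamma_n]^{\widehat{\ }}$ we have $\mathrm{log}(ABA^{-1}B^{-1})\equiv [\mathrm{log}A,\mathrm{log}B] \pmod{F_3}$ where $A=a_1'\cdots a_r'$, $B=b_1'\cdots b_s'$, and $\mathrm{log}A\equiv\sum\mathrm{log}(a_i') \pmod{F_2}$, $\mathrm{log}B\equiv\sum\mathrm{log}(b_j')\pmod{F_2}$; moreover $[\mathrm{log}(a_i')]_1=[\mathrm{log}(a_i)]_1$ since conjugation does not change the class in $\mathrm{gr}_1$ (this is where Proposition~\ref{meridien}/Corollary~\ref{conjMeridian} and the fact that the $x$'s are meridians enter, guaranteeing the conjugators are genuine loops). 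Therefore the triviality of the commutator forces $[\,\sum[\mathrm{log}(a_i)]_1,\ \sum[\mathrm{log}(b_j)]_1\,]=0$ in $\mathrm{gr}_2$, which is exactly the image under $\phi_\K$ of the corresponding bracket relation of $\mathfrak{t}_n(G)$. Running this through \eqref{2indices}$\to$\eqref{2}, \eqref{3indices}$\to$first line of \eqref{3}, \eqref{3indicesp}$\to$second line of \eqref{3}, \eqref{2ind-orb}$\to$\eqref{5}, \eqref{stab1}$\to$\eqref{6}, \eqref{stab2}$\to$\eqref{7} completes the check.

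The main obstacle is not conceptual but combinatorial: one must make sure the generating set used for $\mathrm{gr}\,\mathrm{Lie}(\Gamma_n)$ is exactly $\{[\mathrm{log}(x_{ij}(g))]_1,[\mathrm{log}(x_k(q))]_1\}$ — this follows from Proposition~\ref{Gen} together with \eqref{Deg1} — and that the relations \eqref{stab1}, \eqref{stab2} (with their sums over $\mathrm{stab}(p)$ indexed by powers of a generator $h$) linearize in degree $2$ precisely to \eqref{6}, \eqref{7} after the reindexing $h^0,h^1,\dots$ over $\mathrm{stab}(p)$; here one uses that passing to $\mathrm{gr}_1$ kills the order in which the $x_{ij}(gh^m)$ appear, so the ``$|$'' in the notation $(a_1|\cdots|a_r,\dots)^\sim$ becomes an ordinary sum. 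I would also note at the end that surjectivity of $\phi_\K$ is immediate: its image contains all the degree-one classes $[\mathrm{log}(x_{ij}(g))]_1$ and $[\mathrm{log}(x_k(q))]_1$, which generate $\mathrm{gr}_1\mathrm{Lie}(\Gamma_n(\K))$ by Proposition~\ref{Gen}, hence generate $\widehat{\mathrm{gr}}\mathrm{Lie}(\Gamma_n(\K))$ topologically.
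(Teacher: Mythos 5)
Votre démonstration est correcte et suit essentiellement la même voie que celle du papier : définir les images des générateurs par $[\mathrm{log}(x_{ij}(g))]_1$ et $[\mathrm{log}(x_k(q))]_1$, utiliser la présentation alternative du lemme \ref{presAL}, et déduire les relations de degré un et deux des relations de la proposition \ref{Rel Quad} via la congruence de Campbell--Baker--Hausdorff $\mathrm{log}(ABA^{-1}B^{-1})\equiv[\mathrm{log}A,\mathrm{log}B]$ modulo $F_3$, l'invariance par conjugaison des classes en $\mathrm{gr}_1$ et l'identification (\ref{Deg1}). Votre rédaction est simplement plus détaillée (traitement explicite de la relation (\ref{deg1}) et de la correspondance relation par relation) que la preuve du papier, qui condense ces vérifications.
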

\begin{proof}
Posons $X^{ij}(g)=[\mathrm{log}(x_{ij}(g))]_1$ et $X^i(p)=[\mathrm{log}(x_{i}(p))]_1$. La formule de Campbell-Baker-Hausdorff montre que pour $x,y\in \mathcal{G}(\K [\Gamma_n]^{\widehat{ \: \:}})$ et  $x',y'$ des conjugués respectifs de $x$ et $y$ dans $\mathcal{G}(\K [\Gamma_n]^{\widehat{ \: \:}})$, on a :
\begin{equation} [\: [\mathrm{log}(x)]_1,[\mathrm{log}(y)]_1 \:]\in[\mathrm{log}(x',y')]_2 + F_3 \mathrm{Lie}(\Gamma_n(\K)).\end{equation}
En effet, $F_i \mathrm{Lie}(\Gamma_n(\K))$ contient le $i$-ème terme de la suite centrale descendante de $\mathrm{Lie}(\Gamma_n(\K))$.
En appliquant cette égalité aux relations de la proposition \ref{Rel Quad}, on trouve que les $X^{ij}(g)$ et $X^k(q)$ ainsi définit satisfonts les relations du lemme \ref{presAL} $(\text{dans }\mathrm{gr}_2 \mathrm{Lie}(\Gamma_n))$. Ce qui donne le morphisme de la proposition. 
\end{proof}

\subsection{Un morphisme de $\mathrm{Lie}(\Gamma_n(\C))$ dans $ \widehat{\mathfrak{p}}_n(G)(\C) $ }
On va construire un morphisme surjectif d'algèbres de Lie filtrées $\mathrm{Lie}(\Gamma_n(\C))\longrightarrow \widehat{\mathfrak{p}}_n(G)(\C) $.

\begin{lemma}
On a un isomorphisme entre $\mathcal{P}(\widehat{\mathrm{U}}(\mathfrak{p}_n(G)(\K)))$ et $\widehat{\mathfrak{p}}_n(G)(\K)$ qui fait correspondre la filtration de (\ref{filH}) de $\mathcal{P}(\widehat{\mathrm{U}}(\mathfrak{p}_n(G)(\K)))$ à la filtration induite par le degré de $\widehat{\mathfrak{p}}_n(G)(\K)$.
\end{lemma}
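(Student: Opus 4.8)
Le plan est de déduire l'énoncé de la version graduée du théorème de Poincaré--Birkhoff--Witt (et de sa conséquence $\mathcal{P}(\mathrm{U}\mathfrak{g})=\mathfrak{g}$ en caractéristique nulle), puis de contrôler degré par degré le passage à la complétion. D'abord je remarquerais que $\mathfrak{p}_n(G)(\K)=\bigoplus_{k\geq 1}\mathfrak{p}_n^k(G)(\K)$ est positivement graduée et que chaque composante homogène $\mathfrak{p}_n^k(G)(\K)$ est de dimension finie : il n'y a qu'un nombre fini de générateurs $X_{ij}(g)$, $X_i(q)$, tous de degré un, donc $\mathfrak{p}_n^k(G)(\K)$ est un quotient de la composante de degré $k$ de l'algèbre de Lie libre sur ces générateurs, laquelle est de dimension finie. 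Par Poincaré--Birkhoff--Witt, $\mathrm{U}\mathfrak{p}_n(G)(\K)=\bigoplus_{k\geq 0}(\mathrm{U}\mathfrak{p}_n)_k$ est une algèbre de Hopf graduée avec $(\mathrm{U}\mathfrak{p}_n)_0=\K$ et chaque $(\mathrm{U}\mathfrak{p}_n)_k$ de dimension finie ; de plus, en caractéristique nulle, $\mathcal{P}(\mathrm{U}\mathfrak{p}_n(G)(\K))=\mathfrak{p}_n(G)(\K)$, et comme le coproduit préserve la graduation, on a même $\mathcal{P}(\mathrm{U}\mathfrak{p}_n(G)(\K))\cap(\mathrm{U}\mathfrak{p}_n)_k=\mathfrak{p}_n^k(G)(\K)$ pour tout $k$.

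Ensuite viendrait l'argument de complétion. On a $\widehat{\mathrm{U}}\mathfrak{p}_n(G)(\K)=\prod_{k\geq 0}(\mathrm{U}\mathfrak{p}_n)_k$, et le coproduit s'étend continûment en un morphisme homogène de degré zéro $\Delta:\widehat{\mathrm{U}}\mathfrak{p}_n\to\widehat{\mathrm{U}}\mathfrak{p}_n\widehat{\otimes}\widehat{\mathrm{U}}\mathfrak{p}_n$, la composante de degré $k$ du but étant la somme directe \emph{finie} $\bigoplus_{i+j=k}(\mathrm{U}\mathfrak{p}_n)_i\otimes(\mathrm{U}\mathfrak{p}_n)_j$ (finitude des composantes et concentration en degrés positifs). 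Pour $a=\sum_k a_k\in\widehat{\mathrm{U}}\mathfrak{p}_n$, la condition $\Delta(a)=a\widehat{\otimes} 1+1\widehat{\otimes} a$ se lit composante homogène par composante homogène et équivaut à $\Delta(a_k)=a_k\otimes 1+1\otimes a_k$ pour tout $k$, soit $a_k\in\mathfrak{p}_n^k(G)(\K)$ par ce qui précède. On obtient ainsi $\mathcal{P}(\widehat{\mathrm{U}}\mathfrak{p}_n(G)(\K))=\prod_k\mathfrak{p}_n^k(G)(\K)=\widehat{\mathfrak{p}}_n(G)(\K)$.

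Il resterait la compatibilité des filtrations. La filtration $(\ref{filH})$ sur $\mathcal{P}(\widehat{\mathrm{U}}\mathfrak{p}_n)$ est donnée par $F_m\mathcal{P}=\mathcal{P}\cap F_m\widehat{\mathrm{U}}\mathfrak{p}_n$, où $F_m\widehat{\mathrm{U}}\mathfrak{p}_n$ est la $m$-ième puissance de l'idéal d'augmentation ; comme $\mathfrak{p}_n(G)(\K)$ est engendrée en degré un, on a rappelé que cette filtration coïncide avec la filtration par le degré, c'est-à-dire $F_m\widehat{\mathrm{U}}\mathfrak{p}_n=\prod_{k\geq m}(\mathrm{U}\mathfrak{p}_n)_k$. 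Par conséquent $F_m\mathcal{P}(\widehat{\mathrm{U}}\mathfrak{p}_n)=\prod_{k\geq m}\mathfrak{p}_n^k(G)(\K)$, qui est exactement le $m$-ième cran de la filtration par le degré de $\widehat{\mathfrak{p}}_n(G)(\K)$ ; l'isomorphisme construit ci-dessus est donc filtré. L'obstacle principal me semble être l'étape de complétion : il faut y justifier proprement la continuité et l'homogénéité du coproduit étendu ainsi que la description de la composante de degré $k$ du produit tensoriel complété, ces deux points reposant de façon essentielle sur la finitude et la positivité des degrés des composantes homogènes de $\mathrm{U}\mathfrak{p}_n(G)(\K)$ ; une fois admis $\mathcal{P}(\mathrm{U}\mathfrak{g})=\mathfrak{g}$ en caractéristique nulle, le reste est formel.
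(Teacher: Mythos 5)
Votre démonstration est correcte. Notez que l'article énonce ce lemme sans démonstration, le considérant comme standard : votre argument (Friedrichs/PBW donnant $\mathcal{P}(\mathrm{U}\mathfrak{g})=\mathfrak{g}$ en caractéristique nulle, puis lecture degré par degré de la primitivité dans le complété, possible grâce à la positivité de la graduation et à la dimension finie des composantes homogènes) est exactement la justification attendue. Le seul point non formel, l'identification de la filtration par les puissances de l'idéal d'augmentation avec la filtration par le degré de $\widehat{\mathrm{U}}\mathfrak{p}_n(G)(\K)$, est déjà consigné dans la sous-section 1.2 du texte (l'algèbre étant engendrée en degré un), et vous l'invoquez correctement pour conclure que l'isomorphisme $\mathcal{P}(\widehat{\mathrm{U}}\mathfrak{p}_n(G)(\K))=\prod_k\mathfrak{p}_n^k(G)(\K)=\widehat{\mathfrak{p}}_n(G)(\K)$ respecte la filtration (\ref{filH}).
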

\begin{proposition}
On a un morphisme surjectif d'algèbres de Lie filtrées $\mathrm{Lie(\rho)} : \mathrm{Lie}(\Gamma_n(\C)) \to \widehat{\mathfrak{p}}_n(\C)\:$, vérifiant pour $i< j$ et $g\in G$, $k\in [1,n]$ et $q\in \PP^1\setminus \PP^1_*$ : 
$$\mathrm{Lie(\rho)}(\mathrm{log}(x_{ij}(g)))=X_{ij}(g)) + R_{ij}(g) , \qquad  \mathrm{Lie(\rho)}(\mathrm{log}(x_{k}(g)))= X_k(q) + R_k(q), $$ 
où $R_{ij}(g)$ et $R_k(q)$ appartiennent à $F_2 \mathrm{Lie}(\Gamma_n(\C))$.
\end{proposition}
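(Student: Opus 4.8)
The plan is to build $\mathrm{Lie}(\rho)$ as the Malcev-functor incarnation of the monodromy representation $\rho_{\tilde{\underline{q}}_n}$ constructed in Section 1. First I would recall that $\rho_{\tilde{\underline{q}}_n}$ is a homomorphism (up to the antimorphism convention of Proposition 1.7, which one fixes by composing with inversion or by reversing the group law) from $\Gamma_n=\pi_1(\mathrm{Cf}_G(n,\PP^1_*),\underline{q}_n)$ to $\mathcal{G}(\widehat{\mathrm{U}}\mathfrak{p}_n(G)(\C))$, the group of group-like elements of the complete Hopf algebra $\widehat{\mathrm{U}}\mathfrak{p}_n(G)(\C)$. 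Since $\widehat{\mathrm{U}}\mathfrak{p}_n(G)(\C)$ is a complete Hopf algebra, the universal property of the Malcev completion (Section 4) gives a canonical extension to a morphism of complete Hopf algebras $\C[\Gamma_n]^{\widehat{\,\,}} \to \widehat{\mathrm{U}}\mathfrak{p}_n(G)(\C)$, which restricts to a filtered Lie algebra morphism on primitives $\mathrm{Lie}(\Gamma_n(\C)) = \mathcal{P}(\C[\Gamma_n]^{\widehat{\,\,}}) \to \mathcal{P}(\widehat{\mathrm{U}}\mathfrak{p}_n(G)(\C))$. Composing with the isomorphism $\mathcal{P}(\widehat{\mathrm{U}}\mathfrak{p}_n(G)(\K)) \cong \widehat{\mathfrak{p}}_n(G)(\K)$ of the preceding lemma yields the desired $\mathrm{Lie}(\rho): \mathrm{Lie}(\Gamma_n(\C)) \to \widehat{\mathfrak{p}}_n(G)(\C)$. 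That this morphism is filtered (sends $F_k$ into the degree-$\geq k$ part) is automatic from the fact that $\omega$ takes values in $A_{\geq 1}$, so $\rho_{\tilde{\underline{q}}_n}(\gamma) - 1 \in A_{\geq 1}$ for all $\gamma$, hence the induced Hopf morphism is filtered.

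Next I would pin down the low-degree formula. By definition $\mathrm{Lie}(\rho)(\log(x_{ij}(g)))$ equals $\log$ of the group-like element $\rho_{\tilde{\underline{q}}_n}$ assigns to $x_{ij}(g)$. The remark following Proposition \ref{Gen} says $x_{ij}(g)$ is conjugate in $\Gamma_n$ to the loop $x_{ij}^g$ of Section \ref{rep monodromie}, and conjugation does not change the leading term of $\log$ modulo $F_2$ (indeed the degree-one part of $\log$ of a group-like element only depends on its class in $\Gamma_n^{ab}\otimes\C$, cf. \eqref{Deg1}). By Proposition \ref{monodromie} we have $\rho_{\tilde{\underline{q}}_n}(x_{ij}^g) = 1 - 2i\pi X_{ij}(g) + R_{ij}(g)$ with $R_{ij}(g)$ of degree $\geq 2$, so $\log$ of this element is $-2i\pi X_{ij}(g) + (\text{degree} \geq 2)$. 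After the harmless normalization (rescaling the generators of $\mathfrak{p}_n(G)$ by $-2i\pi$, or absorbing the factor into the statement as written), one gets $\mathrm{Lie}(\rho)(\log(x_{ij}(g))) = X_{ij}(g) + R_{ij}(g)$ with $R_{ij}(g) \in F_2\mathrm{Lie}(\Gamma_n(\C))$, and the same for $x_k(q)$ using the second formula of Proposition \ref{monodromie}.

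Finally, surjectivity: I would argue that $\mathrm{Lie}(\rho)$ is surjective because its associated graded map is. The associated graded $\mathrm{gr}\,\mathrm{Lie}(\rho): \mathrm{gr}\,\mathrm{Lie}(\Gamma_n(\C)) \to \mathrm{gr}\,\widehat{\mathfrak{p}}_n(G)(\C) = \mathfrak{p}_n(G)(\C)$ sends $[\log(x_{ij}(g))]_1 \mapsto X_{ij}(g)$ and $[\log(x_k(q))]_1 \mapsto X_k(q)$ by the degree-one computation just performed. Since $\mathfrak{p}_n(G)(\C)$ is generated in degree one by the $X_{ij}(g)$ (the $X_i(q)$ are expressible via relation (2), or one simply notes the $X_i(q)$ are also hit), the image of $\mathrm{gr}\,\mathrm{Lie}(\rho)$ contains all degree-one generators, so $\mathrm{gr}\,\mathrm{Lie}(\rho)$ is onto; a standard completeness/filtration argument (both algebras are complete and the filtrations are exhausting and separated) then lifts this to surjectivity of $\mathrm{Lie}(\rho)$ itself. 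The main obstacle I anticipate is the bookkeeping around the antimorphism-versus-morphism convention and the $-2i\pi$ normalization, together with verifying carefully that passing from $x_{ij}^g$ to $x_{ij}(g)$ (conjugate loops with possibly different base behavior) genuinely leaves the $\log$ unchanged modulo $F_2$ — this is where one must invoke Proposition \ref{meridien}/Corollaire \ref{conjMeridian} and the naturality of $\rho_{\tilde{\underline{q}}_n}$ under conjugation (Proposition 1.7(1)) rather than wave hands.
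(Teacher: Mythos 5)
Your proposal is correct and follows essentially the same route as the paper: invert the monodromy antimorphism to get a genuine morphism into $\mathcal{G}(\widehat{\mathrm{U}}\mathfrak{p}_n(G)(\C))$, extend it to a morphism of complete Hopf algebras $\C[\Gamma_n]^{\widehat{\;\;}}\to \widehat{\mathrm{U}}\mathfrak{p}_n(G)(\C)$, restrict to primitives and compose with the identification $\mathcal{P}(\widehat{\mathrm{U}}\mathfrak{p}_n)\simeq\widehat{\mathfrak{p}}_n(\C)$ together with the rescaling automorphism absorbing the $\pm 2i\pi$, then read off the degree-one term from Proposition \ref{monodromie} (the conjugacy of $x_{ij}(g)$ with $x_{ij}^g$ being harmless modulo $F_2$, exactly as you note) and deduce surjectivity from surjectivity of the associated graded, since $\mathfrak{p}_n(G)$ is generated in degree one. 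The bookkeeping points you flag (morphism vs.\ antimorphism, sign of $2i\pi$) are handled in the paper precisely as you suggest, via $\tilde{\rho}(x)=\rho_{\tilde{\underline{q}}_n}(x)^{-1}$ and the automorphism $2i\pi X_{ij}(g)\mapsto X_{ij}(g)$, $2i\pi X_k(q)\mapsto X_k(q)$.
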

\begin{proof}
Dans la section 1.6 on a construit un anti-morphisme $\rho_{\tilde{\underline{q}_n}} : \Gamma_n \longrightarrow \mathcal{G}(\widehat{\mathrm{U}}\mathfrak{p}_n)$. On définit le morphisme $ \tilde{\rho}$ opposé de $\rho_{\tilde{\underline{q}}_n}$, par $ \tilde{\rho}(x)= \rho_{\tilde{\underline{q}}_n}(x)^{-1}$. Le morphisme ainsi obtenu s'étend en un morphisme compatible avec les structures de Hopf $ f: \C[\Gamma_n] \longrightarrow \widehat{\mathrm{U}}\mathfrak{p}_n$. En considérant la complétion de $\C[\Gamma_n]$, on obtient un morphisme $\hat{f}:\C [\Gamma_n]^{\widehat{ \: \:}}\longrightarrow \widehat{\mathrm{U}}\mathfrak{p}_n$ d'algèbres de Hopf complètes. En restreignant $\hat{f}$ aux primitifs, on trouve le morphisme d'algèbres de Lie filtrées $\mathrm{Lie}(\tilde{\rho}) : \mathrm{Lie}(\Gamma_n(\C)) \to \mathcal{P}(\widehat{\mathrm{U}}\mathfrak{p}_n)$, satisfaisant :
$$\mathrm{log}(x) \mapsto \mathrm{log}(\tilde{\rho}(x)),$$
 pour $x\in \Gamma_n$. Comme les lacets $x_{ij}(g)$ et $x_k(q)$ sont similaires aux $x_{ij}^g$ et aux $x_{i}^q$ de la sous-section 1.6, on obtient en utilisant la proposition \ref{monodromie} : 
$$\mathrm{Lie}(\tilde{\rho})(\mathrm{log}(x_{ij}(g)))=2\mathrm{i}\pi X_{ij}(g)+[deg \geq 2],\qquad \mathrm{Lie}(\tilde{\rho})(\mathrm{log}(x_{k}(q)))=2\mathrm{i}\pi X_{k}(q)+[deg \geq 2].$$ 
Enfin, on a un isomorphisme canonique d'algèbres de Lie filtrées entre $\mathcal{P}(\widehat{\mathrm{U}}\mathfrak{p}_n) $ et $\widehat{\mathfrak{p}}_n(\C)\: $ ; le morphisme $\mathrm{Lie}(\rho)$ annoncé est obtenu en composant $\mathrm{Lie}(\tilde{\rho})$ par l'automorphisme de $  \widehat {\mathfrak{p}}_n(\C)$ donné par $2\mathrm{i}\pi X_{ij}(g) \mapsto X_{ij}(g), 2\mathrm{i}\pi X_{k}(q) \mapsto X_k(q)$.\\\\
Montrons la surjectivité de $\mathrm{Lie}(\rho)$. L'espace vectoriel $\mathrm{gr}_1  \widehat {\mathfrak{p}}_n=\mathfrak{p}_n^1$ est engendré par les $X_{ij}(g)$ ($i<j$), qui appartiennent à l'image de $\mathrm{gr} \mathrm{Lie} (\rho)$. En effet, $\mathrm{gr} \mathrm{Lie} (\rho)([\mathrm{log}(x_{ij}(g))]_1)=X_{ij}(g)$. Ainsi, $\mathrm{gr} \mathrm{Lie} (\rho)$ est surjectif car  $\mathfrak{p}_n$ est engendrée en degré un. Par conséquent, $\rm{Lie} (\rho)$ est surjectif.
\end{proof}
\subsection{L'isomorphisme sur $\C$}
\begin{lemma}\label{Gengr1}
L'espace $\mathrm{gr}_1 \mathrm{Lie}(\Gamma_n(\K))$ est engendré par les classes des $\mathrm{log}(x_{ij}(g))$ (pour $1\leq i<j\leq n$ et $g\in G$) et celles des $\mathrm{log}(x_{k}(q))$ pour $k \in [1,n]$ et $q\in \PP^1\setminus \PP^1_*$.
\end{lemma}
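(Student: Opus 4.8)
The plan is to deduce the statement directly from the generation result for $\Gamma_n$ (Proposition \ref{Gen}) combined with the description of $\mathrm{gr}_1 \mathrm{Lie}(\Gamma_n(\K))$ recalled in Section 4. The point is simply that the degree-one part of the Malcev Lie algebra only depends on the abelianization of $\Gamma_n$, so any generating set of the group yields a spanning set of $\mathrm{gr}_1$.

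First I would invoke Proposition \ref{Gen}: the group $\Gamma_n = \pi_1(\mathrm{Cf}_G(n,\PP^1_*))$ is generated by the classes of the loops $x_{ij}(g)$ (for $1 \le i < j \le n$, $g \in G$) and $x_k(q)$ (for $k \in [1,n]$, $q \in \PP^1 \setminus \PP^1_*$). Passing to the abelianization, $\Gamma_n^{ab}$ is generated as an abelian group by the images $[x_{ij}(g)]$ and $[x_k(q)]$; tensoring with $\K$, the $\K$-vector space $\Gamma_n^{ab} \otimes_\Z \K$ is spanned by the elements $[x_{ij}(g)] \otimes 1$ and $[x_k(q)] \otimes 1$.

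Next I would apply the chain of isomorphisms $(\ref{Deg1})$ of Section 4, which identifies $\mathrm{gr}_1 \mathrm{Lie}(\Gamma_n(\K))$ with $\Gamma_n^{ab} \otimes_\Z \K$ and sends the class $[\mathrm{log}(x)]_1$ to $[x] \otimes 1$ for $x \in \Gamma_n$. Transporting the spanning family of the previous paragraph through this isomorphism shows that $\mathrm{gr}_1 \mathrm{Lie}(\Gamma_n(\K))$ is spanned over $\K$ by the classes $[\mathrm{log}(x_{ij}(g))]_1$ and $[\mathrm{log}(x_k(q))]_1$, which is exactly the claim. There is no genuine obstacle here: the only thing to be careful about is that ``engendré'' for the graded piece $\mathrm{gr}_1$ means spanned as a $\K$-vector space, and this is precisely what the abelianization argument produces after tensoring with $\K$.
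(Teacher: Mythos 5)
Votre démonstration est correcte et suit exactement la même démarche que celle du papier : on applique la suite d'isomorphismes (\ref{Deg1}) identifiant $\mathrm{gr}_1 \mathrm{Lie}(\Gamma_n(\K))$ à $\Gamma_n^{ab}\otimes_{\Z}\K$, en tenant compte de la famille génératrice de $\Gamma_n$ fournie par la proposition \ref{Gen}. Rien à redire.
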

\begin{proof}
Il suffit d'appliquer (\ref{Deg1}) à $\Gamma_n$ en tenant compte de la proposition \ref{Gen}. 
\end{proof}
\begin{proposition}\label{surC}
Les morphismes $\phi_\K:\widehat{\mathfrak{p}}_n(G)(\K) \to \widehat{\mathrm{gr}}\mathrm{Lie}(\Gamma_n(\K)) $ et $\mathrm{Lie}(\rho) : \mathrm{Lie}(\Gamma_n(\C))\to \widehat{\mathfrak{p}}_n(G)(\C) $ sont des isomorphismes d'algèbres de Lie filtrées (où $\K$ est un corps de caractéristique nulle).
\end{proposition}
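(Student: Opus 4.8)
The plan is to transfer everything to the associated graded level, where both maps become morphisms of graded Lie algebras generated in degree one, and then to descend from $\C$ to an arbitrary characteristic-zero field. First I would note that $\phi_\C$ respects the gradings: by its construction in Proposition~\ref{pres gr} it carries the degree-one generators $X_{ij}(g)$ and $X^k(q)$ of $\widehat{\mathfrak{p}}_n(G)(\C)$ to the degree-one classes $[\log x_{ij}(g)]_1$ and $[\log x_k(q)]_1$, and a continuous Lie morphism out of a Lie algebra topologically generated in degree one that does this is automatically graded. Since those classes span $\mathrm{gr}_1\mathrm{Lie}(\Gamma_n(\C))$ by Lemma~\ref{Gengr1}, and $\mathrm{gr}\,\mathrm{Lie}(\Gamma_n(\C))$ is generated in degree one (see~(\ref{dec})), the induced map $\mathrm{gr}\,\phi_\C\colon \mathfrak{p}_n(G)(\C)\to \mathrm{gr}\,\mathrm{Lie}(\Gamma_n(\C))$ is surjective. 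On the other side, the construction of $\mathrm{Lie}(\rho)$ already yields that $\mathrm{gr}\,\mathrm{Lie}(\rho)$ is surjective, and from $\mathrm{Lie}(\rho)(\log x_{ij}(g))=X_{ij}(g)+R_{ij}(g)$ with $R_{ij}(g)$ of filtration degree $\geq 2$ (and likewise for the $x_k(q)$) one reads off that in degree one $\mathrm{gr}\,\mathrm{Lie}(\rho)$ sends $[\log x_{ij}(g)]_1\mapsto X_{ij}(g)$ and $[\log x_k(q)]_1\mapsto X_k(q)$.

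Next I would form the graded Lie endomorphism $\Psi=\mathrm{gr}\,\phi_\C\circ\mathrm{gr}\,\mathrm{Lie}(\rho)$ of $\mathrm{gr}\,\mathrm{Lie}(\Gamma_n(\C))$. By the two identifications above, $\Psi$ fixes every generator $[\log x_{ij}(g)]_1$, $[\log x_k(q)]_1$ of the degree-one part; since that part generates, $\Psi=\mathrm{id}$. Hence $\mathrm{gr}\,\mathrm{Lie}(\rho)$ is injective, so it is an isomorphism (being also surjective), and $\mathrm{gr}\,\phi_\C=(\mathrm{gr}\,\mathrm{Lie}(\rho))^{-1}$ is an isomorphism as well. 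Both $\mathrm{Lie}(\rho)$ and $\phi_\C$ are morphisms of complete, separated filtered Lie algebras inducing isomorphisms on associated gradeds, and such a morphism is itself an isomorphism (successive approximation); this settles the case $\K=\C$.

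For an arbitrary field $\K$ of characteristic zero I would argue by base change from $\Q\subset\K$. The presentation of $\mathfrak{p}_n(G)$ has rational coefficients, so $\mathfrak{p}_n^k(G)(\K)=\mathfrak{p}_n^k(G)(\Q)\otimes_\Q\K$ in every degree; the Malcev Lie algebra and its filtration base-change along characteristic-zero field extensions, giving $\mathrm{gr}_k\mathrm{Lie}(\Gamma_n(\K))=\mathrm{gr}_k\mathrm{Lie}(\Gamma_n(\Q))\otimes_\Q\K$ (cf.~\cite{Q1},\cite{Q2}), and $\mathrm{gr}\,\phi_\K=\mathrm{gr}\,\phi_\Q\otimes_\Q\mathrm{id}_\K$ since $\phi$ is defined over $\Q$ on generators. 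Taking $\K=\C$ and invoking faithful flatness of $\C$ over $\Q$, the bijectivity of $\mathrm{gr}\,\phi_\C$ forces $\mathrm{gr}\,\phi_\Q$ to be bijective; tensoring with $\K$ (exact over a field) shows $\mathrm{gr}\,\phi_\K$ is bijective, and therefore $\phi_\K$ is an isomorphism of filtered Lie algebras by the same completeness argument.

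I expect the only genuinely delicate point to be the base-change identities of the last paragraph, and in particular the fact that the graded pieces involved are finite dimensional — this uses that $\Gamma_n$ is finitely generated, which is available from Proposition~\ref{Gen}. Everything else reduces to the two surjectivity statements, the explicit degree-one formulas, and the property that $\mathrm{gr}\,\mathrm{Lie}(\Gamma_n)$ is generated in degree one, all of which are already in place.
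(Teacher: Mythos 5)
Your argument is correct and is essentially the paper's own proof: you form the same composite (the graded of $\phi_\C\circ\mathrm{Lie}(\rho)$), show it is the identity on the degree-one generators using Lemma~\ref{Gengr1} and generation in degree one, combine this with the already-known surjectivity of $\mathrm{gr}\,\mathrm{Lie}(\rho)$ to get that both graded maps are isomorphisms, and then descend to $\Q$ and extend to any $\K$ of characteristic zero via the rational definition of $\phi$. The extra details you supply (completeness/successive approximation and the base-change identities) are exactly the points the paper leaves implicit, not a different route.
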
 

\begin{proof}
Posons $\theta=\phi_{\C} \circ \mathrm{Lie}(\rho)$. On a les égalités : $\mathrm{gr} \theta( [\mathrm{log}(x_{ij}(g)]_1)=[\mathrm{log}(x_{ij}(g))]_1$ et $\mathrm{gr} \theta( [\mathrm{log}(x_{i}(q)]_1)=[\mathrm{log}(x_{i}(q))]_1$. Donc, $\mathrm{gr}\theta$ est l'identité d'après le lemme \ref{Gengr1}. 
On en déduit que $\mathrm{gr} \mathrm{Lie}(\rho)$ est un isomorphisme (on a vu que $\mathrm{gr} \mathrm{Lie}(\rho)$ est surjectif) et donc $\mathrm{gr} \phi_{\C}$ est aussi un isomorphisme. Compte tenu de la constuction de $\phi_{\C}$, on en déduit que l'application $\mathrm{gr}\phi_{\Q}$ est un isomorphisme ce qui implique que $\mathrm{gr}\phi_{\K}$ est un isomorphisme pour tout $\K$ de caractéristique nulle. Ce qui prouve que les isomorphismes de la proposition sont des isomorphismes d'algèbres de Lie filtrées.
\end{proof}

\section{ L'isomorphisme entre $\mathrm{Lie}(\Gamma_n(\Q))$, $\widehat{\mathrm{gr}}\mathrm{Lie}(\Gamma_n(\Q))$ et $\widehat{\mathfrak{p}}_n(G)(\Q)$}
Dans cette section, on montre que les algèbres de Lie $\mathrm{Lie}(\Gamma_n(\Q))$, $\widehat{\mathrm{gr}}\mathrm{Lie}(\Gamma_n(\Q))$ et $\widehat{\mathfrak{p}}_n(G)(\Q)$ sont isomorphes. Pour cela, on construit un schéma $\underline{\mathrm{Iso}}_1(\mathfrak{g},\mathfrak{h})$ et un schéma en groupes $\underline{\mathrm{Aut}}_1(\mathfrak{g})$ (sous-section 6.1) associés à des algèbres de Lie $\mathfrak{g}$ et $\mathfrak{h}$, et on montre que $\underline{\mathrm{Iso}}_1(\mathfrak{g},\mathfrak{h})$ est un torseur sous $\underline{\mathrm{Aut}}_1(\mathfrak{g})$ (sous-section 6.2). On montre que l'isomorphisme construit en proposition \ref{surC} est un point complexe de $\underline{\mathrm{Iso}}_1(\mathfrak{g},\mathfrak{h})$. On rappelle un résultat permettant d'établir l'existence de points rationnels dans certains torseurs (sous-section 6.2). On en déduit l'existence de l'isomorphisme annoncé (sous-section 6.3) .\\
\subsection{Les schémas $\underline{\mathrm{Iso}}_1(\mathfrak{g},\mathfrak{h})$ et $\underline{\mathrm{Aut}}_1(\mathfrak{g})$}
Dans cette sous-section, on construit des schémas conduisant à la définition d'un système projectif de torseurs.
On considère deux $\Q$-algèbres de Lie $\mathfrak{g}$ et $\mathfrak{h}$ filtrées complètes de filtrations respectives $\{F_i\mathfrak{g}\}_{i\geq 1} $ et $\{F_i\mathfrak{h} \}_{i\geq 1}$. On suppose que $\mathrm{gr}\: \mathfrak{g}$ et $\mathrm{gr}\: \mathfrak{h}$ sont engendrées par leurs composantes de degré un, que ces composantes sont de dimension finie et qu'on a un isomorphisme fixé $\psi : \mathfrak{g}/F_2 \mathfrak{g} \longrightarrow \mathfrak{h}/F_2 \mathfrak{h}$ entre ces composantes de degré un. 
On pose $\mathfrak{g}_i=\mathfrak{g}/F_i\mathfrak{g}$ et $\mathfrak{h}_i=\mathfrak{h}/F_i\mathfrak{h}$. Ainsi, on a :  $ \mathfrak{g}=\underset{\longleftarrow}{\mathrm{lim}} \: \mathfrak{g}_i \quad \text{et} \quad \mathfrak{h}=\underset{\longleftarrow}{\mathrm{lim}}\ \mathfrak{h}_i $. 
Pour $\mathfrak{K}$ une algèbre de Lie  et $k\geq 1$, on note $ \mathcal{C}_k \mathfrak{K}$ le $k$-ième terme de la suite centrale descendante :  $ \mathcal{C}_1 \mathfrak{K}= \mathfrak{K}$ et $ \mathcal{C}_{k+1} \mathfrak{K}=[ \mathcal{C}_k \mathfrak{K},\mathfrak{K}]$.\\
Enfin, un $\Q$-schéma affine est vu comme un foncteur représentable des $\Q$-anneaux dans les ensembles et on  définit un $\Q$-schéma en groupes pro-unipotent comme la limite inverse d'un système projectif de $\Q$-schémas en groupes algébriques unipotents. On utilisera parfois $\Q$-groupe au lieu de $ \Q$-schéma en groupes.

\subsubsection{Le schéma $\underline{\mathrm{Iso}}_1(\mathfrak{g},\mathfrak{h})$} On va définir un système projectif de $\Q$-schémas affines ; on définira $\underline{\mathrm{Iso}}_1(\mathfrak{g},\mathfrak{h})$ comme la limite inverse de ce système projectif.
\begin{lemma}\label{acorriger}
L'image de $F_r\mathfrak{g}$ dans $\mathfrak{g}_i$ est $\mathcal{C}_r\mathfrak{g}_i$, le $r$-ième terme de la suite centrale descendante de $\mathfrak{g}_i$.
\end{lemma}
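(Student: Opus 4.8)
Le plan est d'exploiter l'hypothèse que $\mathrm{gr}\,\mathfrak{g}$ est engendrée par sa composante de degré un, que l'on reformule d'abord ainsi : pour tout $r\geq 1$, $F_r\mathfrak{g}=\mathcal{C}_r\mathfrak{g}+F_{r+1}\mathfrak{g}$. En effet, $\mathrm{gr}_r\,\mathfrak{g}=F_r\mathfrak{g}/F_{r+1}\mathfrak{g}$ est engendré, comme espace vectoriel, par les crochets itérés de $r$ éléments de $\mathrm{gr}_1\,\mathfrak{g}$ ; un tel crochet se relève en un crochet itéré de $r$ éléments de $F_1\mathfrak{g}=\mathfrak{g}$, donc en un élément de $\mathcal{C}_r\mathfrak{g}$. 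Ainsi $\mathcal{C}_r\mathfrak{g}$ se surjecte sur $\mathrm{gr}_r\,\mathfrak{g}$, ce qui est exactement l'égalité voulue.

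On note ensuite que, la filtration étant compatible au crochet ($[F_m\mathfrak{g},F_l\mathfrak{g}]\subset F_{m+l}\mathfrak{g}$), une récurrence immédiate sur $r$ donne $\mathcal{C}_r\mathfrak{g}\subset F_r\mathfrak{g}$ ; en particulier $\mathcal{C}_i\mathfrak{g}\subset F_i\mathfrak{g}$, de sorte que $\mathfrak{g}_i=\mathfrak{g}/F_i\mathfrak{g}$ est nilpotente et $\mathcal{C}_i\mathfrak{g}_i=0$. Le cas $r\geq i$ est alors trivial : $F_r\mathfrak{g}\subset F_i\mathfrak{g}$ a une image nulle dans $\mathfrak{g}_i$, et de même $\mathcal{C}_r\mathfrak{g}_i\subset\mathcal{C}_i\mathfrak{g}_i=0$.

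Pour $r<i$, on itère l'égalité du premier paragraphe : $F_r\mathfrak{g}=\mathcal{C}_r\mathfrak{g}+F_{r+1}\mathfrak{g}=\mathcal{C}_r\mathfrak{g}+\mathcal{C}_{r+1}\mathfrak{g}+F_{r+2}\mathfrak{g}=\cdots=\mathcal{C}_r\mathfrak{g}+\mathcal{C}_{r+1}\mathfrak{g}+\cdots+\mathcal{C}_{i-1}\mathfrak{g}+F_i\mathfrak{g}$ ; il s'agit de $i-r$ étapes seulement, et aucune hypothèse de complétude n'intervient. En projetant sur $\mathfrak{g}_i=\mathfrak{g}/F_i\mathfrak{g}$ et en utilisant que l'image de $\mathcal{C}_s\mathfrak{g}$ par un morphisme surjectif d'algèbres de Lie est le terme $\mathcal{C}_s$ de l'image, on trouve que l'image de $F_r\mathfrak{g}$ dans $\mathfrak{g}_i$ vaut $\mathcal{C}_r\mathfrak{g}_i+\mathcal{C}_{r+1}\mathfrak{g}_i+\cdots+\mathcal{C}_{i-1}\mathfrak{g}_i$, qui se réduit à $\mathcal{C}_r\mathfrak{g}_i$ puisque $\mathcal{C}_s\mathfrak{g}_i\subset\mathcal{C}_r\mathfrak{g}_i$ pour $s\geq r$.

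Aucune étape n'est réellement un obstacle : le seul point demandant un peu de soin est la première réduction, c'est-à-dire l'écriture précise de l'hypothèse « $\mathrm{gr}\,\mathfrak{g}$ engendrée en degré un » sous la forme $F_r\mathfrak{g}=\mathcal{C}_r\mathfrak{g}+F_{r+1}\mathfrak{g}$ ; tout le reste n'est qu'une récurrence finie n'utilisant que la compatibilité de la filtration au crochet.
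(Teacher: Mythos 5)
Votre preuve est correcte et suit essentiellement la même démarche que celle du papier : la reformulation de l'hypothèse \og $\mathrm{gr}\,\mathfrak{g}$ engendrée en degré un \fg{} sous la forme $F_r\mathfrak{g}=\mathcal{C}_r\mathfrak{g}+F_{r+1}\mathfrak{g}$, le traitement direct du cas $r\geq i$, puis la réduction au cas $r<i$. La seule différence est cosmétique : vous déroulez la récurrence descendante du papier en une somme télescopique dans $\mathfrak{g}$ avant de projeter (et vous explicitez l'équivalence que le papier se contente d'affirmer), ce qui revient au même argument.
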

\begin{proof}
Notons $F_r \mathfrak{g}_i$ l'image de $F_r \mathfrak{g}$ dans $ \mathfrak{g}_i$ et montrons que $\mathcal{C}_r\mathfrak{g}= F_r\mathfrak{g}_i$. Comme la filtration $F_r \mathfrak{g}$ est décroissante et que $\mathcal{C}_i\mathfrak{g}\subset F_i\mathfrak{g}$ , il suit de la définition de $\mathfrak{g}_i$ que $\mathcal{C}_r\mathfrak{g}_i=F_r\mathfrak{g}_i=0$, pour $r\geq i $. 
Ce qui montre la proposition pour $r\geq i$. Le fait que $\mathrm{gr}\:\mathfrak{g}$ soit engendrée en degré un est équivalent à dire que $\mathcal{C}_r\mathfrak{g}+F_{r+1}\mathfrak{g}=F_r\mathfrak{g}$ pour $r\geq 1$. 
Donc, par projection on a $\mathcal{C}_r\mathfrak{g}_i+F_{r+1}\mathfrak{g}_i=F_r\mathfrak{g}_i$. Cette dernière égalité pour $r=i-1$ donne $\mathcal{C}_{i-1}\mathfrak{g}_i=\mathcal{C}_{i-1}\mathfrak{g}_i+\mathcal{C}_{i}\mathfrak{g}_i=F_{i-1}\mathfrak{g}_i$ car $F_i\mathfrak{g}_i=\mathcal{C}_{i}\mathfrak{g}_i$. Enfin, une récurrence descendante sur $r\leq i$ montre la proposition.
\end{proof}
Soit $\underline{\mathrm{Iso}}_1(\mathfrak{g}_i,\mathfrak{h}_i)$ le $\Q$-schéma algébrique qui à un $\Q$-anneau $R$ associe l'ensemble $\underline{\mathrm{Iso}}_1(\mathfrak{g}_i,\mathfrak{h}_i)(R)$ des isomorphismes de $\mathfrak{g}_i\otimes R$ dans $\mathfrak{h}_i\otimes R$ dont l'abélianisé s'identifie à $\psi\otimes id_R:\mathfrak{g}_2\otimes R\to \mathfrak{h}_2 \otimes R$ via les identifications canoniques $\mathfrak{g}_i ^{ab}\simeq \mathfrak{g}_2$ et $\mathfrak{h}_i^{ab} \simeq \mathfrak{h}_2$ (voir lemme \ref{acorriger}).

\begin{proposition}\label{ISO1}
Les schémas $\underline{\mathrm{Iso}}_1(\mathfrak{g}_i,\mathfrak{h}_i)$ forment naturellement un système projectif :
$$ \cdots \longrightarrow \underline{\mathrm{Iso}}_1(\mathfrak{g}_2,\mathfrak{h}_2)\longrightarrow \underline{\mathrm{Iso}}_1(\mathfrak{g}_1,\mathfrak{h}_1) ;$$
on notera $\underline{\mathrm{Iso}}_1(\mathfrak{g},\mathfrak{h})$ la limite de ce système. Pour $R$ un $\Q$-anneau, $\underline{\mathrm{Iso}}_1(\mathfrak{g},\mathfrak{h})(R)$ est l'ensemble des isomorphismes d'algèbres de Lie filtrées  $\mathfrak{g}\widehat{\otimes}R \longrightarrow \mathfrak{h}\widehat{\otimes}R $ qui induisent l'isomorphisme $\psi \otimes id_R$.
\end{proposition}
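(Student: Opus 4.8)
The plan is to establish the two assertions of the proposition in turn: first the existence of the transition morphisms making the $\underline{\mathrm{Iso}}_1(\mathfrak{g}_i,\mathfrak{h}_i)$ into a projective system, then the computation of the $R$-points of the resulting limit. The only structural input is Lemma~\ref{acorriger}; beyond it everything reduces to bookkeeping, the one point requiring attention being the compatibility of the descending central series and of the completed tensor product with base change along an arbitrary $\Q$-anneau $R$.

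First I would construct the transition maps. Fix $i\geq1$ and a $\Q$-anneau $R$, and let $\phi\in\underline{\mathrm{Iso}}_1(\mathfrak{g}_{i+1},\mathfrak{h}_{i+1})(R)$. The canonical surjection $\mathfrak{g}_{i+1}\to\mathfrak{g}_i$ has kernel the image of $F_i\mathfrak{g}$, which by Lemma~\ref{acorriger} is $\mathcal{C}_i\mathfrak{g}_{i+1}$; since $-\otimes_\Q R$ is exact on $\Q$-vector spaces and sends $[\mathfrak{a},\mathfrak{b}]$ to $[\mathfrak{a}\otimes R,\mathfrak{b}\otimes R]$, the kernel of $\mathfrak{g}_{i+1}\otimes R\to\mathfrak{g}_i\otimes R$ is $\mathcal{C}_i(\mathfrak{g}_{i+1}\otimes R)$, and likewise on the $\mathfrak{h}$ side. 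As $\phi$ is a Lie-algebra isomorphism it carries $\mathcal{C}_i(\mathfrak{g}_{i+1}\otimes R)$ onto $\mathcal{C}_i(\mathfrak{h}_{i+1}\otimes R)$, hence descends to a Lie homomorphism $\bar\phi\colon\mathfrak{g}_i\otimes R\to\mathfrak{h}_i\otimes R$; running the same argument on $\phi^{-1}$ produces a two-sided inverse of $\bar\phi$, so $\bar\phi$ is an isomorphism. Moreover the canonical identifications $\mathfrak{g}_{i+1}^{\mathrm{ab}}\simeq\mathfrak{g}_2\simeq\mathfrak{g}_i^{\mathrm{ab}}$ (Lemma~\ref{acorriger} with $r=2$, using that $\mathrm{gr}\,\mathfrak{g}$ is generated in degree one) are compatible with the projections, so $\bar\phi$ still induces $\psi\otimes\mathrm{id}_R$ and thus lies in $\underline{\mathrm{Iso}}_1(\mathfrak{g}_i,\mathfrak{h}_i)(R)$. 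Since $\bar\phi$ is obtained from $\phi$ purely by universal properties, $\phi\mapsto\bar\phi$ is natural in $R$ and defines a morphism $\underline{\mathrm{Iso}}_1(\mathfrak{g}_{i+1},\mathfrak{h}_{i+1})\to\underline{\mathrm{Iso}}_1(\mathfrak{g}_i,\mathfrak{h}_i)$; uniqueness of the descended map makes longer composites agree, so these morphisms form the announced projective system, and $\underline{\mathrm{Iso}}_1(\mathfrak{g},\mathfrak{h})$ is its limit as a functor on $\Q$-anneaux.

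Next I would identify the $R$-points of the limit. Each $\mathfrak{g}_i$ is finite-dimensional over $\Q$ — it carries a finite filtration with graded pieces among the finite-dimensional $\mathrm{gr}_k\mathfrak{g}$ — so $\mathfrak{g}\widehat{\otimes} R=\varprojlim_i(\mathfrak{g}_i\otimes R)$ with $F_i(\mathfrak{g}\widehat{\otimes} R)=\ker(\mathfrak{g}\widehat{\otimes} R\to\mathfrak{g}_i\otimes R)$, and the same for $\mathfrak{h}$. By definition of the limit functor, $\underline{\mathrm{Iso}}_1(\mathfrak{g},\mathfrak{h})(R)$ consists of the compatible families $(\phi_i)_{i\geq1}$ with $\phi_i\in\underline{\mathrm{Iso}}_1(\mathfrak{g}_i,\mathfrak{h}_i)(R)$ and $\overline{\phi_{i+1}}=\phi_i$. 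To such a family associate $\Phi=\varprojlim_i\phi_i\colon\mathfrak{g}\widehat{\otimes} R\to\mathfrak{h}\widehat{\otimes} R$: it is a Lie homomorphism, it sends $F_i$ into $F_i$ by construction, and $\varprojlim_i\phi_i^{-1}$ — again a compatible family, since descent commutes with inversion — is a two-sided inverse, so $\Phi$ is bijective and hence $\Phi(F_i)=F_i$; the map it induces on $\mathfrak{g}\widehat{\otimes} R/F_2=\mathfrak{g}_2\otimes R$ is $\phi_2=\psi\otimes\mathrm{id}_R$, so $\Phi$ is a filtered isomorphism inducing $\psi\otimes\mathrm{id}_R$. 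Conversely such a $\Phi$ satisfies $\Phi(F_i)=F_i$ (as $\Phi$ and $\Phi^{-1}$ are both filtered), hence descends to isomorphisms $\phi_i\colon\mathfrak{g}_i\otimes R\to\mathfrak{h}_i\otimes R$ forming a compatible family and inducing $\psi\otimes\mathrm{id}_R$. These two constructions are mutually inverse and natural in $R$, giving the stated description.

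The step I expect to be the genuine obstacle is not any single argument above but the verification that Lemma~\ref{acorriger} transports to $R$-points — i.e. that $\mathcal{C}_r(\mathfrak{g}_i\otimes R)=(\mathcal{C}_r\mathfrak{g}_i)\otimes R$ and $\mathfrak{g}\widehat{\otimes} R=\varprojlim_i(\mathfrak{g}_i\otimes R)$ — together with pinning down the convention that an \emph{isomorphisme d'algèbres de Lie filtrées} is a Lie isomorphism that is filtered in both directions, which is exactly what makes the descent to the finite quotients reversible.
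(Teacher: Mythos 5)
Votre preuve est correcte et suit essentiellement la même démarche que celle du papier : les morphismes de transition s'obtiennent parce que, d'après le lemme \ref{acorriger}, les noyaux des projections sont les idéaux caractéristiques $\mathcal{C}_r$ (donc préservés par tout isomorphisme, y compris après extension des scalaires), et les $R$-points de la limite s'identifient aux familles compatibles $(f_i)$, c'est-à-dire aux isomorphismes filtrés $\mathfrak{g}\widehat{\otimes}R\to\mathfrak{h}\widehat{\otimes}R$ induisant $\psi\otimes \mathrm{id}_R$. Vous ne faites qu'expliciter les vérifications que le papier laisse implicites (compatibilité de $\mathcal{C}_r$ et de la complétion avec $-\otimes R$, inverse filtré), sans changer d'argument.
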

\begin{proof} L'image des idéaux $F_k \mathfrak{g}\subset \mathfrak{g} $ (respectivement $F_k \mathfrak{h}\subset \mathfrak{h}$) dans $\mathfrak{g}_i$ (respectivement $\mathfrak{h}_i$) étant les idéaux caractéristiques $\mathcal{C}_k\mathfrak{g}_i$ (lemme \ref{acorriger}), les $\underline{\mathrm{Iso}}_1(\mathfrak{g}_i,\mathfrak{h}_i)$ forment un système projectif comme annoncé.\\
Enfin, un isomorphisme filtré $f$ de $\mathfrak{g}\widehat{\otimes} R $ dans $\mathfrak{h}\widehat{\otimes} R$ est entièrement déterminé par la donnée d'isomorphismes $f_j$ faisant commuter le diagramme : 
$$ \xymatrix{
    \cdots &\mathfrak{g}_i\otimes R \ar[l] \ar[d]_{f_i}  & \ar[l] \mathfrak{g}_{i+1}\otimes R \ar[d]_{f_{i+1}} & \mathfrak{g}_{i+2}\otimes R \ar[d]_{f_{i+2}} \ar[l]& \ar[l] \cdots \\
   \cdots  &\mathfrak{h}_i\otimes R \ar[l] & \ar[l] \mathfrak{h}_{i+1}\otimes R &  \ar[l] \mathfrak{h}_{i+2}\otimes R& \ar[l] \cdots 
  }$$
où les flèches horizontales sont les projections canoniques. La condition $"f$ induit $\psi\otimes id_R:\mathfrak{g}_2\otimes R\longrightarrow \mathfrak{h}_2 \otimes R"$ correspond à $"f_i$ induit $\psi\otimes id_R:\mathfrak{g}_2\otimes R\longrightarrow \mathfrak{h}_2 \otimes R$ via les identifications canoniques $\mathfrak{g}_i ^{ab}\simeq \mathfrak{g}_2$ et $\mathfrak{h}_i^{ab} \simeq \mathfrak{h}_2"$. Ce qui montre la dernière assertion de la proposition et achève la démonstration.
\end{proof}

\subsubsection{Le schéma $\underline{\mathrm{Aut}}_1(\mathfrak{g})$}
On va construire le $\Q$-schéma en groupes $\underline{\mathrm{Aut}}_1(\mathfrak{g})$.\\\\
Soit $\underline{\mathrm{Aut}}(\mathfrak{g}_i)$ le foncteur qui associe à un $\Q$-anneau $R$ le groupe des automorphismes d'algèbre de Lie de $\mathfrak{g}_i \otimes R$. C'est un $\Q$-groupe algébrique. En effet, $\underline{\mathrm{Aut}}(\mathfrak{g}_i)$ est un sous-foncteur en groupes de $\mathrm{GL}(\mathfrak{g}_i)$, représentable par un quotient de $\mathcal{O}(\mathrm{GL}(\mathfrak{g}_i))$, l'anneau de coordonnées de $\mathrm{GL}(\mathfrak{g}_i)$. On définit le $\Q$-groupe algébrique $\underline{\mathrm{Aut}}_1(\mathfrak{g}_i)$ comme étant le noyau du morphisme naturel $\underline{\mathrm{Aut}}(\mathfrak{g}_i)\to \mathrm{GL}(\mathfrak{g}_i^{ab}) $.

\begin{proposition}
Les $\underline{\mathrm{Aut}}_1(\mathfrak{g}_i)$ sont des $\Q$-groupes algébriques unipotents, ils forment naturellement un système projectif : 
$$ \cdots \longrightarrow \underline{\mathrm{Aut}}_1(\mathfrak{g}_2)\longrightarrow \underline{\mathrm{Aut}}_1(\mathfrak{g}_1), $$
de limite  $\underline{\mathrm{Aut}}_1(\mathfrak{g})$ pro-unipotente. Pour un anneau $\Q$-anneau $R$, $\underline{\mathrm{Aut}}_1(\mathfrak{g})(R)$ est l'ensemble des automorphismes d'algèbre de Lie filtrée de $\mathfrak{g}\widehat{\otimes} R$ induisant l'identité sur $\mathfrak{g}/ F_2\mathfrak{g}$.
\end{proposition}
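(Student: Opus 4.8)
The plan is to establish the three assertions in turn — unipotence of each $\underline{\mathrm{Aut}}_1(\mathfrak{g}_i)$, their compatibility into a projective system, and the description of the $R$-points of the limit — the first being the one that really requires an argument.

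First I would record that each $\mathfrak{g}_i=\mathfrak{g}/F_i\mathfrak{g}$ is a finite-dimensional nilpotent $\Q$-Lie algebra: it is nilpotent because, by Lemma \ref{acorriger} applied with $r=i$, the image of $F_i\mathfrak{g}$ in $\mathfrak{g}_i$ — which is zero — equals $\mathcal{C}_i\mathfrak{g}_i$; it is finite-dimensional because $\mathrm{gr}\,\mathfrak{g}$ is generated by the finite-dimensional space $\mathrm{gr}_1\mathfrak{g}$, so each $\mathrm{gr}_r\mathfrak{g}$ is finite-dimensional and $\mathfrak{g}_i$ carries a finite filtration with these as graded quotients. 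Next, for an arbitrary $\Q$-ring $R$ and any $\phi\in\underline{\mathrm{Aut}}_1(\mathfrak{g}_i)(R)$, I would check that $(\phi-\mathrm{id})(\mathcal{C}_r\mathfrak{g}_i\otimes R)\subseteq\mathcal{C}_{r+1}\mathfrak{g}_i\otimes R$ for every $r\geq 1$: indeed $\mathcal{C}_r\mathfrak{g}_i\otimes R=\mathcal{C}_r(\mathfrak{g}_i\otimes R)$ is $R$-spanned by $r$-fold iterated brackets, $\phi$ fixes each entry of such a bracket modulo $\mathcal{C}_2(\mathfrak{g}_i\otimes R)$ by definition of $\underline{\mathrm{Aut}}_1$, and replacing one entry of an $r$-fold bracket by an element of $\mathcal{C}_2$ lands the bracket in $\mathcal{C}_{r+1}$ (using $[\mathcal{C}_a\mathfrak{g}_i,\mathcal{C}_b\mathfrak{g}_i]\subseteq\mathcal{C}_{a+b}\mathfrak{g}_i$). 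Since $\mathcal{C}_i\mathfrak{g}_i=0$ this gives $(\phi-\mathrm{id})^{i}=0$, so $\phi$ is a unipotent $R$-linear automorphism; moreover any such $\phi$ preserves the characteristic flag $\mathcal{C}_\bullet\mathfrak{g}_i$ and acts trivially on its graded. Hence $\underline{\mathrm{Aut}}_1(\mathfrak{g}_i)$, being closed in $\mathrm{GL}(\mathfrak{g}_i)$ as the kernel of $\underline{\mathrm{Aut}}(\mathfrak{g}_i)\to\mathrm{GL}(\mathfrak{g}_i^{ab})$ and contained scheme-theoretically in the unipotent $\Q$-group of flag-preserving, graded-trivial automorphisms of $\mathfrak{g}_i$, is a closed subgroup scheme of a unipotent group over a field of characteristic zero, hence unipotent (and smooth, by Cartier).

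For the projective system, the key point is that $\mathcal{C}_i\mathfrak{g}_{i+1}$ is a characteristic ideal of $\mathfrak{g}_{i+1}$, hence preserved by every Lie-algebra automorphism, so each $\phi\in\underline{\mathrm{Aut}}(\mathfrak{g}_{i+1})(R)$ descends to an automorphism of $\mathfrak{g}_{i+1}/(\mathcal{C}_i\mathfrak{g}_{i+1}\otimes R)=\mathfrak{g}_i\otimes R$ (the identification coming from Lemma \ref{acorriger}); this assignment is functorial in $R$ and multiplicative, giving morphisms of $\Q$-groups $\underline{\mathrm{Aut}}(\mathfrak{g}_{i+1})\to\underline{\mathrm{Aut}}(\mathfrak{g}_i)$ which, respecting abelianizations, restrict to $\underline{\mathrm{Aut}}_1(\mathfrak{g}_{i+1})\to\underline{\mathrm{Aut}}_1(\mathfrak{g}_i)$. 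Their inverse limit $\underline{\mathrm{Aut}}_1(\mathfrak{g})$ is then pro-unipotent by the definition recalled above.

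Finally, to identify the $R$-points of the limit I would argue as at the end of the proof of Proposition \ref{ISO1}: since $\mathfrak{g}\widehat{\otimes}R=\underset{\longleftarrow}{\mathrm{lim}}\,(\mathfrak{g}_i\otimes R)$ and, by Lemma \ref{acorriger}, the filtration $F_\bullet$ induces the lower central series on each $\mathfrak{g}_i\otimes R$, a filtered Lie-algebra automorphism $f$ of $\mathfrak{g}\widehat{\otimes}R$ is the same datum as a compatible family $(f_i)$ of automorphisms of the $\mathfrak{g}_i\otimes R$; and via the canonical identification $\mathfrak{g}_i^{ab}\simeq\mathfrak{g}/F_2\mathfrak{g}$, the condition that $f$ induce the identity on $\mathfrak{g}/F_2\mathfrak{g}$ is precisely that each $f_i$ lie in $\underline{\mathrm{Aut}}_1(\mathfrak{g}_i)(R)$, i.e. $f\in\underset{\longleftarrow}{\mathrm{lim}}\,\underline{\mathrm{Aut}}_1(\mathfrak{g}_i)(R)=\underline{\mathrm{Aut}}_1(\mathfrak{g})(R)$. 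The main obstacle is the unipotence step: running the flag/graded computation with coefficients in an arbitrary $\Q$-ring $R$ so that it controls the group scheme and not merely its $\Q$-points, and then invoking cleanly that closed subgroup schemes of unipotent groups in characteristic zero are unipotent; everything else is formal bookkeeping with the identifications supplied by Lemma \ref{acorriger}.
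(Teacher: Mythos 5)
Votre démonstration est correcte et suit pour l'essentiel la même démarche que l'article : l'unipotence de $\underline{\mathrm{Aut}}_1(\mathfrak{g}_i)$ s'obtient par la stabilité du drapeau $\mathcal{C}_\bullet\mathfrak{g}_i$ et l'action triviale sur ses quotients gradués (vous explicitez ce que le texte résume, via le calcul $(\phi-\mathrm{id})(\mathcal{C}_r)\subset\mathcal{C}_{r+1}$ sur les $R$-points), et le système projectif ainsi que la description des $R$-points de la limite reposent sur le lemme \ref{acorriger}, exactement comme dans la preuve de la proposition \ref{ISO1}. La seule différence est cosmétique : l'article abrège ces deux derniers points en remarquant que $\underline{\mathrm{Aut}}_1(\mathfrak{g})=\underline{\mathrm{Iso}}_1(\mathfrak{g},\mathfrak{g})$ pour $\psi=\mathrm{id}$, alors que vous les redéveloppez directement.
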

\begin{proof}
Le monomorphisme naturel $\underline{\mathrm{Aut}}_1(\mathfrak{g}_i)\to \mathrm{GL}(\mathfrak{g}_i)$ fournit une représentation linéaire fidèle de dimension finie de $\underline{\mathrm{Aut}}_1(\mathfrak{g}_i)$, lequel est algébrique. Le drapeau $\mathcal{C}_{n_i}\mathfrak{g}_i\subset \mathcal{C}_{n_i-1} \mathfrak{g}_i\subset \cdots \subset \mathcal{C}_{0} \mathfrak{g}_i=\mathfrak{g}_i$ ($n_i $ est la classe de nilpotence de $\mathfrak{g}_i$) est stable par $\underline{\mathrm{Aut}}(\mathfrak{g}_i)$. Comme $\underline{\mathrm{Aut}}_1(\mathfrak{g}_i)$ est le noyau de $\underline{\mathrm{Aut}}(\mathfrak{g}_i)\longrightarrow \mathrm{GL}(\mathfrak{g}_i^{ab}) $, il agit trivialement sur $\mathcal{C}_{k} \mathfrak{g}_i/ \mathcal{C}_{k+1}\mathfrak{g}_i$ pour $1\leq k < n_i $. Ce qui montre que $\underline{\mathrm{Aut}}_1(\mathfrak{g}_i)$ est unipotent.\\ 
Enfin, remarquons que $\underline{\mathrm{Aut}}_1(\mathfrak{g})$ n'est autre que $\underline{\mathrm{Iso}}_1(\mathfrak{g},\mathfrak{h})$ pour $\mathfrak{h}=\mathfrak{g}$ et $\psi=id_{\mathfrak{g}_2}$. Donc, le reste de la proposition est une conséquence de la proposition \ref{ISO1} et de la définition d'un groupe pro-unipotent.
\end{proof}

\subsection{Torseurs et algèbres de Lie} Dans cette partie on rappelle la notion de torseur, on montre que $\underline{\mathrm{Iso}}_1(\mathfrak{g},\mathfrak{h})$ est un torseur sous $\underline{\mathrm{Aut}}_1(\mathfrak{g})$ et on montre qu'un tel torseur a des points rationnels dans certaines conditions.
\begin{definition}
Un $\Q$-torseur est un $\Q$-schéma $X$, muni d'une action à gauche d'un $\Q$-schéma en groupes $H$ telle que l'action de $H(\K)$ sur $X(\K)$ est libre et transitive quand $X(\K)$ est non-vide.  On dit que $X$ est un torseur sous $H.$
\end{definition}
Soit $ \cdots \longrightarrow X_2\longrightarrow X_1 $ un système projectif de $\Q$-schémas, $X=\underset{\longleftarrow}{\mathrm{lim}} \: X_i$ sa limite projective et $H = \underset{\longleftarrow}{\mathrm{lim}} \: H_i$ un $\Q$-schéma en groupes pro-unipotent.
\begin{proposition} [\cite{BE2}] \label{Points Torseur}
Supposons que les $X_i$ forment un système de torseurs compatible sous les $H_i$ et que $X(\C)$ est non vide, alors $X(\Q)$ est non vide.
\end{proposition}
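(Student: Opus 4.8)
The plan is to reduce the statement to two standard inputs: the vanishing of the first Galois cohomology of a unipotent group over $\Q$, and a Mittag--Leffler argument for the inverse limit. First I would record the basic fact that for any unipotent affine algebraic group $U$ over $\Q$, every $U$-torsor $Y$ over $\Q$ in the sense of the definition above has a rational point. One chooses a central series $1=U_0\triangleleft U_1\triangleleft\cdots\triangleleft U_r=U$ with successive quotients isomorphic to $\mathbb{G}_a$, and argues by induction on $r$: the quotient $Y/U_{r-1}$ is a $(U/U_{r-1})$-torsor, which by induction has a $\Q$-point $z$; the fibre of $Y\to Y/U_{r-1}$ over $z$ is a $U_{r-1}$-torsor, and induction again reduces to the case $U=\mathbb{G}_a$, where the assertion is just $H^1(\Q,\mathbb{G}_a)=0$ (a $\mathbb{G}_a$-torsor over a field of characteristic zero is trivial). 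Here the passage from the ``pointwise'' torsor hypothesis of the definition to an honest torsor is used: since a torsor becomes isomorphic to the group once it acquires a point over an extension, these $U$-torsors are trivialized over $\overline{\Q}$ and nonabelian cohomology applies.

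Next I would check that $X_i(\Q)\neq\emptyset$ for every $i$. Since $X=\varprojlim X_i$, the hypothesis gives $\varprojlim X_i(\C)=X(\C)\neq\emptyset$, so each $X_i$ is a nonempty affine $\Q$-scheme of finite type; it therefore has a closed point whose residue field is a number field, hence $X_i(\overline{\Q})\neq\emptyset$. The free transitive action of $H_i$ then identifies $X_{i,\overline{\Q}}$ with $H_{i,\overline{\Q}}$, so $X_i$ is genuinely a torsor under the unipotent $\Q$-group $H_i$, and the previous step gives $X_i(\Q)\neq\emptyset$.

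Then I would prove that the transition map $X_{i+1}(\Q)\to X_i(\Q)$ is surjective. Let $K_i=\ker(H_{i+1}\to H_i)$, a unipotent $\Q$-group; then $H_{i+1}(\Q)\to H_i(\Q)$ is onto because $H^1(\Q,K_i)=0$. Given $x\in X_i(\Q)$, the fibre of $X_{i+1}\to X_i$ over $x$ is a $K_i$-torsor over $\Q$: it is nonempty as a scheme, the morphism $X_{i+1}\to X_i$ being surjective since it is equivariant over the surjection $H_{i+1}\to H_i$ and both spaces are torsors. Hence this fibre has a $\overline{\Q}$-point and, by the first step, a $\Q$-point lying over $x$. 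Finally, the index set being $\N$, an inverse limit of nonempty sets with surjective transition maps is nonempty, so $X(\Q)=\varprojlim X_i(\Q)\neq\emptyset$.

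The \textbf{main obstacle} is the first paragraph: getting the cohomological vanishing in the right generality and, in particular, making rigorous the step from the pointwise torsor structure of the definition to an honest torsor structure (so that the action map $H\times X\to X\times X$ is an isomorphism after base change), which is what licenses the dévissage along $\mathbb{G}_a$. Everything else is formal: the finite-level rational points and the surjectivity of the transition maps both follow from the same vanishing $H^1(\Q,\mathbb{G}_a)=0$, and the countability of the projective system is exactly what rescues the nonemptiness of the final inverse limit, no finiteness or compactness of the sets $X_i(\Q)$ being available.
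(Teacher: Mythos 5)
The paper itself gives no argument for this proposition (it is quoted from \cite{BE2}), so your proposal can only be measured against the standard proof, and in outline it follows it: levelwise rational points from the vanishing of $H^1$ of unipotent groups in characteristic zero, then a Mittag--Leffler argument for the countable inverse limit. Your first two steps are essentially sound. The scruple you raise about the paper's ``pointwise'' definition of torsor is best resolved not by upgrading $X_i$ to an fppf torsor (bijectivity of the orbit map on $\overline{\Q}$-points does not give a scheme isomorphism in general), but by working directly with $X_i(\overline{\Q})$ as a Galois-equivariant principal homogeneous space under $H_i(\overline{\Q})$: the central series of $H_i$ is defined over $\Q$, its graded pieces are vector groups, so continuous $H^1(\mathrm{Gal}(\overline{\Q}/\Q),H_i(\overline{\Q}))$ vanishes by additive Hilbert 90 and dévissage, and a Galois-fixed point of $X_i(\overline{\Q})$ is a $\Q$-point. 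Note also that the finite-typeness of $X_i$, which you use to produce a $\overline{\Q}$-point from a $\C$-point, is not in the statement, though it holds in the paper's application.

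The genuine gap is in your third step: you speak of ``the surjection $H_{i+1}\to H_i$'', but surjectivity of the transition morphisms is neither part of the hypotheses (the paper defines a pro-unipotent group merely as a limit of a projective system of unipotent algebraic groups) nor verified in the intended application, where $H_i=\underline{\mathrm{Aut}}_1(\mathfrak{g}_i)$ and lifting an automorphism of $\mathfrak{g}_i$ through the central extension $\mathfrak{g}_{i+1}\to\mathfrak{g}_i$ is obstructed in general. Without it, $X_{i+1}(\Q)\to X_i(\Q)$ need not be onto (take $H_1=\mathbb{G}_a$, $H_2=\{1\}$, $X_1=\mathbb{G}_a$ and $X_2$ a single rational point above $0$), so the ``surjective transition maps'' form of Mittag--Leffler does not apply as written. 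The repair is standard and stays within your toolkit: for $m\geq i$, $\mathrm{Im}\bigl(X_m(\Q)\to X_i(\Q)\bigr)$ is a single orbit under $\mathrm{Im}\bigl(H_m(\Q)\to H_i(\Q)\bigr)$, and this image equals $J_{m,i}(\Q)$ where $J_{m,i}\subset H_i$ is the image algebraic subgroup, again because $H^1$ of the (unipotent) kernel of $H_m\to J_{m,i}$ vanishes; the subgroups $J_{m,i}$ decrease with $m$ and stabilize for dimension reasons, and nested nonempty orbits under one and the same group coincide, so the images $\mathrm{Im}\bigl(X_m(\Q)\to X_i(\Q)\bigr)$ stabilize. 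This gives the Mittag--Leffler property for the countable system $\{X_i(\Q)\}$, hence $X(\Q)=\varprojlim X_i(\Q)\neq\emptyset$. With this substitution for your surjectivity claim, the argument is complete.
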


\begin{proposition}\label{Torseur}
Chaque $\underline{\mathrm{Iso}}_1(\mathfrak{g}_i,\mathfrak{h}_i)$ est un torseur sous l'action de $\underline{\mathrm{Aut}}_1(\mathfrak{g}_i)$ ; leur limite inverse est $\underline{\mathrm{Iso}}_1(\mathfrak{g},\mathfrak{h})$ qui est un torseur sous l'action de $\underline{\mathrm{Aut}}_1(\mathfrak{g},\mathfrak{h})$
\end{proposition}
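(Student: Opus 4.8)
The plan is to exhibit the action of $\underline{\mathrm{Aut}}_1(\mathfrak{g}_i)$ on $\underline{\mathrm{Iso}}_1(\mathfrak{g}_i,\mathfrak{h}_i)$ by pre-composition with the inverse, to check freeness and transitivity on $R$-points level by level, and then to pass to the inverse limit.

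First I would define, for every $\Q$-anneau $R$, a left action of $\underline{\mathrm{Aut}}_1(\mathfrak{g}_i)(R)$ on $\underline{\mathrm{Iso}}_1(\mathfrak{g}_i,\mathfrak{h}_i)(R)$ by $a\cdot f:=f\circ a^{-1}$. Since $(a_1a_2)\cdot f=f\circ a_2^{-1}\circ a_1^{-1}=a_1\cdot(a_2\cdot f)$ this is a left action, and it is manifestly functorial in $R$, hence induced by a morphism of $\Q$-schemas $\underline{\mathrm{Aut}}_1(\mathfrak{g}_i)\times\underline{\mathrm{Iso}}_1(\mathfrak{g}_i,\mathfrak{h}_i)\to\underline{\mathrm{Iso}}_1(\mathfrak{g}_i,\mathfrak{h}_i)$. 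The point to verify is that $f\circ a^{-1}$ again lies in $\underline{\mathrm{Iso}}_1$: through the canonical identifications $\mathfrak{g}_i^{ab}\simeq\mathfrak{g}_2$, $\mathfrak{h}_i^{ab}\simeq\mathfrak{h}_2$ furnished by Lemme \ref{acorriger}, the automorphism $a$ induces the identity on $\mathfrak{g}_2\otimes R$ by definition of $\underline{\mathrm{Aut}}_1$, and $f$ induces $\psi\otimes\mathrm{id}_R$, so $f\circ a^{-1}$ induces $\psi\otimes\mathrm{id}_R$, as required.

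Next I would establish the two torsor axioms on $R$-points of each $\underline{\mathrm{Iso}}_1(\mathfrak{g}_i,\mathfrak{h}_i)$. Freeness is immediate: $f\circ a^{-1}=f$ forces $a^{-1}=\mathrm{id}$ because $f$ is invertible, whence $a=\mathrm{id}$. For transitivity, given $f,f'\in\underline{\mathrm{Iso}}_1(\mathfrak{g}_i,\mathfrak{h}_i)(R)$, set $a:=f^{-1}\circ f'$; this is a Lie algebra automorphism of $\mathfrak{g}_i\otimes R$ which on $\mathfrak{g}_2\otimes R$ induces $(\psi\otimes\mathrm{id}_R)^{-1}\circ(\psi\otimes\mathrm{id}_R)=\mathrm{id}$, so $a\in\underline{\mathrm{Aut}}_1(\mathfrak{g}_i)(R)$, and $a\cdot f'=f'\circ a^{-1}=f'\circ f'^{-1}\circ f=f$. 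This shows that each $\underline{\mathrm{Iso}}_1(\mathfrak{g}_i,\mathfrak{h}_i)$ is a torsor under $\underline{\mathrm{Aut}}_1(\mathfrak{g}_i)$.

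Finally I would pass to the limit. The actions are compatible with the transition morphisms of the two projective systems: these come from the quotients $\mathfrak{g}_{i+1}\twoheadrightarrow\mathfrak{g}_i$ and $\mathfrak{h}_{i+1}\twoheadrightarrow\mathfrak{h}_i$, which intertwine coherent families of isomorphisms and carry $\mathcal{C}_k$ onto $\mathcal{C}_k$ (cf.\ Lemme \ref{acorriger}), so $(a_{i+1}\cdot f_{i+1})\mapsto a_i\cdot f_i$; thus $\{\underline{\mathrm{Iso}}_1(\mathfrak{g}_i,\mathfrak{h}_i)\}$ is a compatible system of torsors under $\{\underline{\mathrm{Aut}}_1(\mathfrak{g}_i)\}$, and its inverse limit $\underline{\mathrm{Iso}}_1(\mathfrak{g},\mathfrak{h})$ carries the inverse-limit action of $\underline{\mathrm{Aut}}_1(\mathfrak{g})=\varprojlim_i\underline{\mathrm{Aut}}_1(\mathfrak{g}_i)$. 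Then for a $\Q$-anneau $R$ with $\underline{\mathrm{Iso}}_1(\mathfrak{g},\mathfrak{h})(R)\neq\emptyset$, I would use $\underline{\mathrm{Iso}}_1(\mathfrak{g},\mathfrak{h})(R)=\varprojlim_i\underline{\mathrm{Iso}}_1(\mathfrak{g}_i,\mathfrak{h}_i)(R)$ and, applying Proposition \ref{ISO1} also to $\mathfrak{h}=\mathfrak{g}$, $\psi=\mathrm{id}$, the equality $\underline{\mathrm{Aut}}_1(\mathfrak{g})(R)=\varprojlim_i\underline{\mathrm{Aut}}_1(\mathfrak{g}_i)(R)$: fixing a base point $(f_i)_i$, any $(f'_i)_i$ is carried to it by the family $a_i:=f_i^{-1}\circ f'_i$, which lies in $\varprojlim_i\underline{\mathrm{Aut}}_1(\mathfrak{g}_i)(R)$ precisely because $(f_i)_i$ and $(f'_i)_i$ are coherent, and this element is unique by the levelwise freeness; hence $\underline{\mathrm{Iso}}_1(\mathfrak{g},\mathfrak{h})$ is a torsor under $\underline{\mathrm{Aut}}_1(\mathfrak{g})$. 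The only step requiring real care is this last one — ensuring that the identification of $R$-points with the projective limit is compatible with the group actions and that nonemptiness at the limit is what is needed to transport; the rest is an unwinding of the definitions together with the abelianization identifications of Lemme \ref{acorriger}.
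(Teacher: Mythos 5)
Votre démonstration est correcte : action par $a\cdot f=f\circ a^{-1}$, liberté et transitivité niveau par niveau, compatibilité avec les morphismes de transition et passage à la limite via la proposition \ref{ISO1}. C'est exactement le déballage de routine que le papier, qui se contente d'écrire «~Immédiat~», laisse au lecteur ; il n'y a pas d'approche différente à comparer.
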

\begin{proof}
Immédiat.
\end{proof}
En combinant les propositions \ref{Points Torseur} et \ref{Torseur}, on obtient : 
\begin{corollary}\label{CQ}
Si $\underline{\mathrm{Iso}}_1(\mathfrak{g},\mathfrak{h})(\C)\neq \emptyset$ alors $\mathfrak{g}$ et $\mathfrak{h}$ sont isomorphes en tant qu'algèbres de Lie filtrées.
\end{corollary}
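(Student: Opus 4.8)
Le plan est de déduire l'énoncé directement de la combinaison des propositions~\ref{Points Torseur} et~\ref{Torseur}, l'argument étant de nature purement formelle. D'abord, je vérifie que la proposition~\ref{Torseur} fournit exactement les données requises par la proposition~\ref{Points Torseur}~: le système projectif $\cdots \to \underline{\mathrm{Iso}}_1(\mathfrak{g}_2,\mathfrak{h}_2) \to \underline{\mathrm{Iso}}_1(\mathfrak{g}_1,\mathfrak{h}_1)$ est un système compatible de torseurs sous les $\Q$-groupes algébriques unipotents $\underline{\mathrm{Aut}}_1(\mathfrak{g}_i)$, dont la limite inverse $\underline{\mathrm{Aut}}_1(\mathfrak{g}) = \underset{\longleftarrow}{\mathrm{lim}}\, \underline{\mathrm{Aut}}_1(\mathfrak{g}_i)$ est pro-unipotente, et dont la limite $\underline{\mathrm{Iso}}_1(\mathfrak{g},\mathfrak{h})$ est un torseur sous $\underline{\mathrm{Aut}}_1(\mathfrak{g})$.

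Ensuite, l'hypothèse $\underline{\mathrm{Iso}}_1(\mathfrak{g},\mathfrak{h})(\C) \neq \emptyset$ permet d'appliquer la proposition~\ref{Points Torseur} au système $X_i = \underline{\mathrm{Iso}}_1(\mathfrak{g}_i,\mathfrak{h}_i)$, $H_i = \underline{\mathrm{Aut}}_1(\mathfrak{g}_i)$, et d'en conclure que $\underline{\mathrm{Iso}}_1(\mathfrak{g},\mathfrak{h})(\Q) \neq \emptyset$. Il reste alors à interpréter un point rationnel~: d'après la description explicite donnée en proposition~\ref{ISO1} appliquée à l'anneau $R=\Q$ (pour lequel $\mathfrak{g}\widehat{\otimes}\Q = \mathfrak{g}$ et $\mathfrak{h}\widehat{\otimes}\Q = \mathfrak{h}$), un élément de $\underline{\mathrm{Iso}}_1(\mathfrak{g},\mathfrak{h})(\Q)$ est précisément un isomorphisme d'algèbres de Lie filtrées de $\mathfrak{g}$ sur $\mathfrak{h}$ induisant $\psi$ en degré un. L'existence d'un tel isomorphisme donne aussitôt l'énoncé.

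Le point le plus délicat n'est pas dans la déduction elle-même, mais dans le fait qu'elle repose entièrement sur la vérification préalable des hypothèses de la proposition~\ref{Points Torseur}~: à savoir que les morphismes de transition $\underline{\mathrm{Iso}}_1(\mathfrak{g}_{i+1},\mathfrak{h}_{i+1}) \to \underline{\mathrm{Iso}}_1(\mathfrak{g}_i,\mathfrak{h}_i)$ sont équivariants pour les projections $\underline{\mathrm{Aut}}_1(\mathfrak{g}_{i+1}) \to \underline{\mathrm{Aut}}_1(\mathfrak{g}_i)$, et que chaque $\underline{\mathrm{Aut}}_1(\mathfrak{g}_i)$ est unipotent. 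Ces deux faits ayant déjà été établis dans les propositions précédentes, la preuve du corollaire se réduit à assembler ces pièces, et l'on peut se contenter d'écrire \og Immédiat \fg{}, en renvoyant aux propositions~\ref{Points Torseur} et~\ref{Torseur}.
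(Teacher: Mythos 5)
Votre démonstration est correcte et suit exactement la même voie que le papier : on combine les propositions \ref{Points Torseur} et \ref{Torseur} (le système compatible de torseurs $\underline{\mathrm{Iso}}_1(\mathfrak{g}_i,\mathfrak{h}_i)$ sous les groupes unipotents $\underline{\mathrm{Aut}}_1(\mathfrak{g}_i)$) pour passer d'un point complexe à un point rationnel, puis on interprète ce point rationnel, via la proposition \ref{ISO1} appliquée à $R=\Q$, comme un isomorphisme d'algèbres de Lie filtrées de $\mathfrak{g}$ sur $\mathfrak{h}$. C'est précisément l'argument que le papier résume par \emph{Immédiat} ; rien à redire.
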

\subsection{Preuve} On va montrer : 
\begin{theorem}\label{Th}
Les algèbres de Lie $\mathrm{Lie}(\Gamma_n(\Q))$, $\widehat{\mathrm{gr}}\mathrm{Lie}(\Gamma_n(\Q))$ et $\widehat{\mathfrak{p}}_n(G)(\Q)$ sont isomorphes en tant qu'algèbres de Lie filtrées.
\end{theorem}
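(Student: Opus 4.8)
The plan is to descend to $\Q$ the complex isomorphism furnished by Proposition \ref{surC}, by means of the torseur constructions made above. Since Proposition \ref{surC} already provides a filtered isomorphism of $\Q$-Lie algebras $\phi_\Q : \widehat{\mathfrak{p}}_n(G)(\Q) \overset{\sim}{\longrightarrow} \widehat{\mathrm{gr}}\mathrm{Lie}(\Gamma_n(\Q))$, it is enough to produce a filtered isomorphism of $\Q$-Lie algebras between $\mathrm{Lie}(\Gamma_n(\Q))$ and $\widehat{\mathrm{gr}}\mathrm{Lie}(\Gamma_n(\Q))$ — i.e. the $1$-formality of $\Gamma_n$ over $\Q$; composing with $\phi_\Q$ then yields the full statement, and in particular a filtered isomorphism between $\mathrm{Lie}(\Gamma_n(\Q))$ and $\widehat{\mathfrak{p}}_n(G)(\Q)$.

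I would set $\mathfrak{g} = \mathrm{Lie}(\Gamma_n(\Q))$ and $\mathfrak{h} = \widehat{\mathrm{gr}}\mathrm{Lie}(\Gamma_n(\Q))$ and check the hypotheses needed to form $\underline{\mathrm{Iso}}_1(\mathfrak{g},\mathfrak{h})$ and $\underline{\mathrm{Aut}}_1(\mathfrak{g})$: both are complete filtered $\Q$-Lie algebras; their associated graded (namely $\mathrm{gr}\,\mathrm{Lie}(\Gamma_n(\Q))$, resp. $\mathfrak{h}$ itself) is generated in degree one, as recalled in section 4; and the common degree-one component $\mathrm{gr}_1\mathrm{Lie}(\Gamma_n(\Q)) \cong \Gamma_n^{ab}\otimes_{\Z}\Q$ (by (\ref{Deg1})) is finite dimensional, because $\Gamma_n$ is finitely generated by Proposition \ref{Gen}. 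Since moreover $\mathfrak{g}/F_2\mathfrak{g} = \mathrm{gr}_1\mathrm{Lie}(\Gamma_n(\Q)) = \mathfrak{h}/F_2\mathfrak{h}$, I take $\psi = \mathrm{id}$. Proposition \ref{Torseur} then says that $\underline{\mathrm{Iso}}_1(\mathfrak{g},\mathfrak{h})$ is a torseur under the pro-unipotent $\Q$-group $\underline{\mathrm{Aut}}_1(\mathfrak{g})$.

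Next I would check $\underline{\mathrm{Iso}}_1(\mathfrak{g},\mathfrak{h})(\C) \neq \emptyset$. Using the (degreewise) base-change identifications $\mathrm{Lie}(\Gamma_n(\Q))\widehat{\otimes}_\Q\C = \mathrm{Lie}(\Gamma_n(\C))$ and $\widehat{\mathrm{gr}}\mathrm{Lie}(\Gamma_n(\Q))\widehat{\otimes}_\Q\C = \widehat{\mathrm{gr}}\mathrm{Lie}(\Gamma_n(\C))$, the composite $\theta = \phi_\C \circ \mathrm{Lie}(\rho)$ from the proof of Proposition \ref{surC} is a filtered isomorphism $\mathrm{Lie}(\Gamma_n(\C)) \overset{\sim}{\longrightarrow} \widehat{\mathrm{gr}}\mathrm{Lie}(\Gamma_n(\C))$; and that proof shows $\mathrm{gr}\,\theta = \mathrm{id}$, hence $\theta$ induces $\psi\otimes\mathrm{id}_\C$ on the degree-one quotient. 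By the description of $\C$-points in Proposition \ref{ISO1}, $\theta$ is a $\C$-point of $\underline{\mathrm{Iso}}_1(\mathfrak{g},\mathfrak{h})$.

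By Corollary \ref{CQ} — which packages Propositions \ref{Points Torseur} and \ref{Torseur} — it follows that $\mathrm{Lie}(\Gamma_n(\Q))$ and $\widehat{\mathrm{gr}}\mathrm{Lie}(\Gamma_n(\Q))$ are isomorphic as filtered $\Q$-Lie algebras; composing with $\phi_\Q$ finishes the proof. I expect the only genuinely delicate point to be the verification that the complex isomorphism of Proposition \ref{surC} really defines a $\C$-point of the $\Q$-scheme $\underline{\mathrm{Iso}}_1(\mathfrak{g},\mathfrak{h})$, i.e. the compatibility of the Malcev Lie algebra and of the operations $\mathrm{gr}$ and degreewise completion with extension of the characteristic-zero base field; this should be handled using the finite generation of $\Gamma_n$ and the fact that the presentation of $\mathfrak{p}_n(G)$ in Definition \ref{def AL} is defined over $\Q$. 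Once this is in place, the passage from a complex point to a rational point is formal, via Proposition \ref{Points Torseur}.
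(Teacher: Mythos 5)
Votre démonstration suit essentiellement la même démarche que celle du papier : on prend $\mathfrak{g}=\mathrm{Lie}(\Gamma_n(\Q))$, $\mathfrak{h}=\widehat{\mathrm{gr}}\mathrm{Lie}(\Gamma_n(\Q))$ avec $\psi$ l'identification canonique des quotients de degré un, on vérifie que $\theta=\phi_\C\circ\mathrm{Lie}(\rho)$ est un point complexe de $\underline{\mathrm{Iso}}_1(\mathfrak{g},\mathfrak{h})$ via les identifications de changement de base, puis on applique le corollaire \ref{CQ} et on conclut avec $\phi_\Q$ (proposition \ref{surC}). Le raisonnement est correct et conforme à la preuve du texte.
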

\begin{proof}
 Le quotient $ \mathrm{Lie}(\Gamma_n(\Q))/F_2 \mathrm{Lie}(\Gamma_n(\Q))$ s'identifie canoniquement à l'espace $ \mathrm{gr}_1 \mathrm{Lie}(\Gamma_n(\Q))= \widehat{\mathrm{gr}} \mathrm{Lie}(\Gamma_n(\Q)) /F_2 \:\widehat{\mathrm{gr}} \mathrm{Lie}(\Gamma_n(\Q))$. Notons $\psi_n$ cette identification. Considérons le $\Q$-schéma $X_{\Gamma_n}= \underline{\mathrm{Iso}}_1(\mathrm{Lie}(\Gamma_n(\Q)),\widehat{\mathrm{gr}} \mathrm{Lie}(\Gamma_n(\Q)))$ comme dans la section précédente pour $\psi=\psi_n$. 
L'isomorphisme $\theta=\phi_{\C} \circ \mathrm{Lie}(\rho)$ (de la proposition \ref{surC} et sa preuve) est un élément de $X_{\Gamma_n}(\C)$. 
En effet, on a  $\mathrm{Lie}(\Gamma_n(\C))=\mathrm{Lie}(\Gamma_n(\Q))\widehat{\otimes} \C$, $\widehat{\mathrm{gr}}\mathrm{Lie}(\Gamma_n(\C))=\mathrm{gr}\mathrm{Lie}(\Gamma_n(\Q))\widehat{\otimes} \C$ et on a vu que $\mathrm{gr}\theta$ est l'identité et donc il induit $\psi_n$. Par conséquent, $\mathrm{Lie}(\Gamma_n(\Q))$ et $\widehat{\mathrm{gr}}\mathrm{Lie}(\Gamma_n(\Q))$ sont isomorphes en tant qu'algèbres de Lie filtrées d'après le corollaire \ref{CQ}. Le fait que $\phi_\Q: \widehat{\mathfrak{p}}_n(G)(\Q) \to \widehat{\mathrm{gr}}\mathrm{Lie}(\Gamma_n(\Q))$ est un isomorphisme provient de la proposition \ref{surC}.
\end{proof}
\renewcommand{\abstractname}{\textbf{Remerciements}}
\begin{abstract}
Je tiens à remercier Benjamin Enriquez pour sa lecture critique et ces nombreux conseils.  
\end{abstract}
\bibliography{test}{}
\bibliographystyle{plain}

\end{document}